\def\QED{\hfill {$\square$}\goodbreak \medskip}
\newcommand{\be} {\begin{equation}}
\newcommand{\ee} {\end{equation}}
\newcommand{\bea} {\begin{eqnarray}}
\newcommand{\eea} {\end{eqnarray}}
\newcommand{\Bea} {\begin{eqnarray*}}
\newcommand{\Eea} {\end{eqnarray*}}
\newcommand{\de} {\delta}
\newcommand{\ga} {\gamma}
\newcommand{\Ga} {\Gamma}
\newcommand{\om} {\omega}
\newcommand{\La} {\Lambda}
\newcommand{\e} {\mathcal{E}_{\beta}}
\newcommand{\f}{\Phi^u_{\beta}}
\newcommand{\sr}{\mathcal{S}(a,b)}
\newcommand{\ue}{u_{\varepsilon}}
\newcommand{\ve}{v_{\varepsilon}}
\newcommand{\fo}{\Phi_{0}^{v_{\varepsilon}}}
\newcommand{\fom}{\Phi_{\beta}^{v_{\varepsilon}}}
\def\dx{\,{\rm d}x}
\def\dy{\,{\rm d}y}
\def\R{{\mathbb R}}
\def\C{{\mathcal C}}
\def\R{{\mathbb R}}
\def\S{S_{\rho}}
\def\t{{2^{\sharp}}}
\def\cri{{L^{2^{\sharp}}_r(\R^N;|x|^{-2^\sharp b})}}
\def\spa{{\mathcal{X}}}
\def\n{{\| |x|^{-a}\na u}}
\def\p{{\| |x|^{-b}u }}
\def\M{{\mathcal{M}_{\rho,\beta}}}
\def\C{{C^q_{a,b}}}
\newcommand{\ra} {\rightarrow}
\newcommand{\na} {\nabla}
\newcommand{\var} {\varepsilon}
\newcommand{\te}{t_{\beta}^{\var}}
\numberwithin{equation}{section}
\newtheorem{definition}{Definition}[section]
\newtheorem{theorem}{Theorem}[section]
\newtheorem{rem}{Remark}[section]
\newtheorem{lemma}{Lemma}[section]
\newtheorem{prop}{Proposition}[section]
\newtheorem{co}{Corollary}[section]
\numberwithin{equation}{section}
\begin{document}
\setlength{\abovedisplayskip}{3pt}
\setlength{\belowdisplayskip}{3pt}
\date{}
	\title{Normalized Solutions  for a Weighted Laplacian Problem with the Caffarelli--Kohn--Nirenberg Critical Exponent}	
\author{ {\bf Divya Goel$\,$\footnote{e-mail: {\tt divya.mat@iitbhu.ac.in}},  Asmita Rai$\,$\footnote{e-mail: {\tt asmita.rai65@gmail.com}}} \\ Department of Mathematical Sciences, Indian Institute of Technology (BHU),\\ Varanasi 221005, India.}

	\maketitle
\begin{abstract}
This article establishes the existence and multiplicity of normalized 
solutions to the weighted nonlinear Schrödinger-type equation governed by the 
Caffarelli--Kohn--Nirenberg operator,
\begin{equation*}
\begin{cases}
-\text{div}(|x|^{-2a}\nabla u)=\lambda \frac{u}{|x|^{2a}}+\beta\frac{|u|^{q-2}u}{|x|^{bq}}
+\frac{|u|^{2^{\sharp}-2}u}{|x|^{b{2^{\sharp}}}}\quad \text{in}~\mathbb{R}^N,\\[2mm]
\displaystyle\int_{\R^N}\frac{|u|^2}{|x|^{2a}}dx=\rho^2,
\end{cases}
\end{equation*}
where $\lambda\in \mathbb{R}$, $\beta,~\rho>0$, $0< a<\frac{N-2}{2}$, $a<b<a+1$, 
$2^{\sharp}:=\frac{2N}{N-2(1+a-b)}$ and $2<q<{2^{\sharp}}$. 

Through constrained variational techniques, refined estimates on the best constants 
in the Caffarelli--Kohn--Nirenberg inequalities, and a bespoke concentration--compactness 
lemma, the study secures mass-subcritical ground states alongside multiple constrained 
critical points, together with high-energy ground state solutions in the mass-critical and supercritical 
regimes---notwithstanding the noncompactness arising from the critical 
Caffarelli--Kohn--Nirenberg nonlinearity over the unbounded domain.
    \\
\medskip

	\noindent \textbf{Key words:} Normalized solution, Caffarelli-Kohn-Nirenberg inequality, Mass-critical, Constraint variational methods.\\
  \noindent \textit{2020 Mathematics Subject Classification: 26D10, 35J20, 49J35. } 
\end{abstract}

\maketitle
\section{Introduction}

Weighted Caffarelli–Kohn–Nirenberg (CKN) type equations with critical Hardy–Sobolev growth form a natural bridge between classical semilinear elliptic problems and models incorporating singular potentials and anisotropic diffusion. In their seminal work, Caffarelli, Kohn and Nirenberg\cite{caffarelli1984first} introduced a family of weighted interpolation inequalities that unify and extend the Sobolev, Hardy and Gagliardo–Nirenberg inequalities, and that are now known to govern the natural functional framework for a broad class of singular and weighted PDEs on  $\mathbb{R}^N$. The equation
\begin{equation*}
-\text{div}(|x|^{-2a}\nabla u)= \beta\frac{|u|^{q-2}u}{|x|^{bq}}+\frac{|u|^{p-2}u}{|x|^{b{p}}}\quad \text{in}~\mathbb{R}^N,
     \end{equation*}
with $0< a<\frac{N-2}{2},~a<b<a+1,~2<q<p:=\frac{2N}{N-2(1+a-b)}$, is a prototypical example of such problems: it couples the CKN operator with both subcritical and critical weighted nonlinearities, and exhibits the full range of analytical difficulties associated with singular potentials, critical exponents and lack of compactness.

Over the last decades, a substantial body of work has developed around the study of singular elliptic problems with Hardy or Hardy–Sobolev potentials, both in local and nonlocal contexts, leading to sharp existence, nonexistence, and multiplicity results for equations and systems with concave–convex, critical, or multiple critical nonlinearities \cite{assunccao2006multiplicity,chen2010existence,ghoussoub2000multiple,catrina2001caffarelli,ghoussoub2004hardy}. 

From the PDE perspective, CKN-type equations are closely related to classical models such as the Lane–Emden and Hardy–H\`enon equations, but the presence of weights in both the operator and the nonlinearity introduces new phenomena that are absent in the unweighted setting. The degeneracy and singularity encoded indeed in the weight $|x|^{-2a}$ affect both regularity and qualitative properties of solutions. In this article, we explored the normalized solution to the problem involving $\text{div}(|x|^{-2a}\nabla u)$. 
Normalized solutions have become a central topic of research in recent years because they appear naturally in a variety of problems in mathematical analysis and in physical models. A normalized solution of a nonlinear PDE subject to a prescribed mass constraint is characterized by
\[
\int_{\mathbb{R}^N} \frac{|u|^2}{|x|^{2a}} \,\mathrm{d}x = \rho^2, \quad \rho > 0.
\]
which fixes the ``mass'' of the state under consideration. Situations of this type arise in several physical theories, such as nonlinear optics and Bose--Einstein condensation, where conservation of mass (or particle number) is a fundamental requirement. The case $a=0$, corresponding to the standard $L^2$-norm constraint, has already been widely explored in the literature.

The modern theory of normalized solutions can be traced back to the seminal work of Cazenave and Lions \cite{cazenave1982orbital} in 1982, where a norm-constrained variational framework was proposed to investigate existence and orbital stability in the mass-subcritical setting via constrained minimization, thereby initiating a long line of subsequent contributions. They particularly studied the following 
\[
-\Delta u = \lambda u + |u|^{p-2}u \quad \text{in } \mathbb{R}^N,
\qquad 
\]
together with the condition $\|u\|_2^2=\rho^2$, where the parameter $\lambda\in\mathbb{R}$ appears as the Lagrange multiplier associated with the mass constraint. The qualitative properties of normalized solutions are largely dictated by the exponent $p$, which separates the mass-subcritical, mass-critical, and mass-supercritical regimes. When $2<p<2+\tfrac{4}{N}$ (mass-subcritical case), the associated energy functional is coercive and bounded from below on the constraint manifold, so one can obtain a global minimizer by a direct variational minimization argument. In contrast, when $p>2+\tfrac{4}{N}$ (mass-supercritical case), the energy becomes unbounded from below along the constraint, making minimization impossible and rendering the description of solutions substantially more delicate. This is addressed by  Jeanjean \cite{jeanjean1997existence} by constructing a variational scheme with mountain pass geometry on the Pohozaev manifold and combining it with suitable compactness tools to obtain normalized solutions. For equations with combined power-type nonlinearities,
\[
-\Delta u = \lambda u + \alpha |u|^{q-2}u + |u|^{p-2}u \quad \text{in } \mathbb{R}^N,
\]
under the constraint $\|u\|_2^2=\rho^2$ with $2<q<p\leq 2^*:=\frac{2N}{N-2}$, Soave \cite{soave2020normalized1,soave2020normalized2} analyzed the three regimes $2<q<p= 2+\frac{4}{N}$, $2<q\leq 2+\frac{4}{N}<p\leq 2^*$, and $2+\frac{4}{N}<q<p\leq 2^*$. 
In many mass-subcritical configurations, the presence of the lower-order power $|u|^{q-2}u$ guarantees that the associated functional is bounded from below and coercive on $S(\rho)=\{u\in H^1(\mathbb{R}^N):\|u\|_2=\rho\}$, so one can find at least one minimizer (a ground state) by the direct method of the calculus of variations.  Physically, such a minimizer describes the lowest-energy standing wave with a given mass and often enjoys stability properties (for instance, orbital stability in some settings). The main question, raised prominently by Soave and others, is whether the constrained functional admits further critical points, in particular one obtained via a mountain-pass argument, leading to a second normalized solution with strictly larger energy. 

The conjectural picture was confirmed in a series of works starting with Jeanjean and Le \cite{jeanjean2022multiple}, who studied normalized solutions for Schr\"odinger equations with mixed power nonlinearities and prescribed mass in dimensions $N \ge 4$. They considered problems of the form
\[
-\Delta u = \lambda u + \mu |u|^{q-2}u + |u|^{2^*-2}u \quad \text{in } \mathbb{R}^N,
\quad \int_{\mathbb{R}^N} |u|^2 \,\mathrm{d}x = a^2,
\]
with $2<q<2^*$ and $\mu>0$ in a suitable range, and proved the existence of at least two distinct normalized solutions. One of these is obtained as a (local) minimizer of the energy on the constraint, often lying at a negative energy level, while the second is a mountain-pass critical point with strictly higher, typically positive, energy.  Subsequently, Wei and Wu\cite{wei2022normalized} extended the result when $N=3$. Further developments, including refined existence and nonexistence, as well as qualitative properties in various settings, can be found in more recent works \cite{li2023nonexistence,zhang2022normalized,liu2024normalized,meng2024normalized,yu2023normalized, jeanjean2021multiple,chen2023multiple,kang2025multiple,jin2025normalized} and the references therein.

Concerning the Hardy-Sobolev nonlinearities, Cardoso, De Oliveira, and Miyagaki \cite{cardoso2024normalized}
 studied the following problem 
\[
-\Delta u + \lambda u
= |x|^{-b}|u|^{q-2}u + |x|^{-d}|u|^{2^*(d)-2}u
\quad \text{in }\mathbb{R}^N,
\]
where $0<b,~d<2$, $2<q<2_b^* := 2 + \frac{4-2b}{N-2}$, and $
2^*(d) := \frac{2(N-d)}{N-2}$.  Here authors proved the existence of normalized solutions using the Jeanjean-type minimax approach.  But there is no work that discusses the normalized solution of the elliptic equations involving singular weights. Taking motivation from this article, we study the following problem 
\begin{equation}
	\begin{cases}
		-\text{div}(|x|^{-2a}\na u)=\lambda \frac{u}{|x|^{2a}}+\beta\frac{|u|^{q-2}u}{|x|^{bq}}+\frac{|u|^{\t-2}u}{|x|^{b\t}}\quad \text{in}~\R^N,\\
		\displaystyle\int_{\R^N}\frac{|u|^2}{|x|^{2a}}dx=\rho^2,
	\end{cases}
	\tag{\(\mathcal{P}^{a}_{\rho}\)} 
	\label{eq:P}  
\end{equation}
where $\lambda\in \R,\;\beta>0,\;0< a<\frac{N-2}{2},\;a< b<a+1,\;\t:=\frac{2N}{N-2d}$, $2<q<\t,~d:=1+a-b$.

The motivation to study this type of problem comes from a new class of inequalities. Caffarelli, Kohn, and Nirenberg \cite{caffarelli1984first} blend ideas from both Sobolev and Hardy by incorporating flexible weights and exponents. While providing a door to fresh ideas, they establish the following theorem, which combines several already known conclusions into a cohesive framework.
\begin{theorem}[Caffarelli-Kohn-Nirenberg inequality]\cite{caffarelli1984first}\label{ckn} There exists a positive constant $C_{a,b} = C_{a,b}(N, r, p, q, \gamma, \alpha, \eta)$ such that for all $u \in C_0^\infty (\mathbb{R}^N) $,
\begin{equation*}
\| |x|^\gamma u \|_q \leq C_{a,b}\, \| |x|^\alpha \nabla u \|_p^\delta ~\| |x|^\eta u \|_r^{1-\delta}
\end{equation*}
where $p,~ r \geq 1,~ q > 0,~ 0 \leq \delta \leq 1 $, and 
$\frac{1}{p} + \frac{\alpha}{N}, ~\frac{1}{r} + \frac{\eta}{N} , ~\frac{1}{q} + \frac{\gamma}{N} > 0$
where
\begin{equation*}
\gamma = \delta\sigma + (1 - \delta) \eta
\end{equation*}
and
\begin{equation*}
\frac{1}{r} + \frac{\gamma}{N} = \delta\left( \frac{1}{p} + \frac{\alpha-1}{N} \right) + (1 - \delta) \left( \frac{1}{q} + \frac{\eta}{N} \right).
\end{equation*}
Also,
\begin{equation*}
0 \leq \alpha - \sigma \quad \text{if} \quad \delta> 0,\quad\text{and}
\end{equation*}
\begin{equation*}
\alpha - \sigma \leq 1 \quad \text{if} \quad \delta > 0, \quad \text{and} \quad \frac{1}{p} + \frac{\alpha - 1}{N} = \frac{1}{q} + \frac{\gamma}{N}.
\end{equation*}
Moreover, $C_{a,b}$ is bounded if and only if $(\alpha-\sigma)\in[0,1].$
\end{theorem}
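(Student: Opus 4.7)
The plan is to establish the inequality by reducing it to two endpoint cases and interpolating between them. When $\delta=0$ the hypotheses force $\gamma=\eta$ and the scaling identity collapses to $r=q$, so the inequality degenerates to an identity of norms up to a constant. The substantive work lies in the endpoint $\delta=1$, namely the weighted Sobolev inequality
\[
\||x|^\gamma u\|_q \le C\,\||x|^\alpha \na u\|_p,\qquad \tfrac{1}{q}+\tfrac{\gamma}{N}=\tfrac{1}{p}+\tfrac{\alpha-1}{N},\qquad 0\le \alpha-\gamma\le 1,
\]
which I would prove by the change of variables $u(x)=|x|^{-\mu}v(x)$ for a carefully chosen exponent $\mu$. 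The identity $\na u=|x|^{-\mu}\na v-\mu|x|^{-\mu-2}x\,v$ shows, after integration by parts on $C_c^\infty(\R^N\setminus\{0\})$, that $\int |x|^{\alpha p}|\na u|^p\dx$ decomposes into the gradient term $\int |x|^{(\alpha-\mu)p}|\na v|^p\dx$, a weighted Hardy-type remainder $c(\mu,\alpha,N,p)\int |x|^{(\alpha-\mu-1)p}|v|^p\dx$, and boundary contributions that vanish. Choosing $\mu=\alpha$ reduces the gradient term to its unweighted form, and the Hardy piece is absorbed by the weighted Hardy inequality in $\R^N$; combining with the classical Sobolev inequality for $v$ and converting back to $u$ yields the weighted Sobolev inequality.

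For $0<\delta<1$ the strategy is Hölder interpolation. Using $\gamma=\delta\sigma+(1-\delta)\eta$ to write $|x|^\gamma |u|=(|x|^\sigma |u|)^\delta(|x|^\eta |u|)^{1-\delta}$ and applying Hölder with suitable conjugate exponents yields
\[
\||x|^\gamma u\|_q \le \||x|^\sigma u\|_{q_*}^{\delta}\,\||x|^\eta u\|_r^{1-\delta},
\]
where $q_*$ is uniquely determined by the dimensional identity relating $p,\alpha,\sigma,q_*$. The $\delta=1$ endpoint, applied with weight $\sigma$ and target exponent $q_*$, controls the first factor by $\||x|^\alpha \na u\|_p$, completing the inequality. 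The stated condition $\alpha-\sigma\in[0,1]$ for $C_{a,b}$ to be bounded arises naturally as the range in which this application of the endpoint produces a finite constant.

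The hard part will be the $\delta=1$ endpoint, specifically the behaviour near the boundary $\alpha-\gamma\in\{0,1\}$ of the admissible range. At $\alpha-\gamma=0$ the Hardy remainder is absorbed trivially; at $\alpha-\gamma=1$ the weighted Hardy constant is attained in the limit, so the absorption into the Sobolev term demands a careful accounting of constants to ensure $C_{a,b}$ remains finite throughout the open range and blows up precisely at the endpoint---this is what produces the boundedness characterization in the statement. A secondary technical point is justifying the substitution $u\mapsto |x|^\mu v$ and the ensuing integrations by parts on the natural weighted Sobolev completion of $C_c^\infty(\R^N)$, which requires a density argument controlling $|x|^{-\mu}v$ near the origin and at infinity.
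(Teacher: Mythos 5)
This theorem is quoted verbatim from Caffarelli--Kohn--Nirenberg \cite{caffarelli1984first}; the paper supplies no proof of it, so there is no internal argument to compare against and your proposal must stand on its own. Your overall skeleton --- splitting off the $\delta=0$ and $\delta=1$ endpoints and recovering $0<\delta<1$ by writing $|x|^{\gamma}|u|=(|x|^{\sigma}|u|)^{\delta}(|x|^{\eta}|u|)^{1-\delta}$ and applying H\"older --- is the standard and correct reduction: the dimensional balance condition does force $\tfrac{1}{q}=\tfrac{\delta}{q_*}+\tfrac{1-\delta}{r}$ once $q_*$ is fixed by $\tfrac{1}{q_*}+\tfrac{\sigma}{N}=\tfrac{1}{p}+\tfrac{\alpha-1}{N}$, so the H\"older step is consistent.

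The genuine gap is in your treatment of the $\delta=1$ endpoint. First, the claimed exact decomposition of $\int |x|^{\alpha p}|\nabla u|^{p}\dx$ into a clean gradient term plus a Hardy remainder via integration by parts is only valid for $p=2$; for $p\neq 2$ the quantity $|\nabla v-\mu|x|^{-2}xv|^{p}$ does not expand into terms that integrate by parts, and the best you can do is a pointwise triangle inequality $|\nabla v|\le \bigl||x|^{\alpha}\nabla u\bigr|+|\alpha|\,|x|^{-1}|v|$ followed by absorption of the Hardy term --- which requires $|\alpha|$ times the Hardy constant to be strictly less than $1$ and therefore fails on part of the admissible range. The original proof avoids this entirely by a dyadic-annuli argument (Sobolev--Poincar\'e on each annulus $\{2^{j}\le|x|\le 2^{j+1}\}$, then summation using the homogeneity of the weights), and some such device is needed here. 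Second, the substitution $v=|x|^{\alpha}u$ converts the target norm into $\||x|^{\gamma-\alpha}v\|_{q}$ with $\gamma-\alpha\in[-1,0]$, so you land on the Hardy--Sobolev inequality for $v$ rather than the classical Sobolev inequality; you must either prove that separately (e.g.\ by a further H\"older interpolation between the Hardy endpoint $\gamma=\alpha-1$ and the Sobolev endpoint $\gamma=\alpha$) or your reduction is circular. Finally, the ``only if'' half of the boundedness characterization $C_{a,b}<\infty\iff \alpha-\sigma\in[0,1]$ is not addressed at all: it requires exhibiting explicit families of test functions (concentrating or spreading under the anisotropic scaling) whose quotients blow up outside that range, and this does not ``arise naturally'' from the sufficiency argument.
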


To study the problem  \eqref{eq:P}, we will  apply the Theorem \ref{ckn} with the exponent transformation $$\gamma=-b,~\alpha=-a,~\eta=-a,~p=2,~r=2.$$ 
This yields 
\begin{equation}\label{c1}
	\||x|^{-b}u\|_q^q \leq \C \||x|^{-a}\nabla u\|^{\delta_qq}_2 ~\||x|^{-a}u\|_2^{(1-\delta_q)q},
    \tag{CKN}
\end{equation} 
where
\[
{\delta_q}=\frac{(N-2a+2b)q-2N}{2q}.\]
  A direct computation shows that the \textit{mass-critical} exponent is
\begin{equation*}
	q_c:=\frac{4+2N}{N-2a+2b}.
\end{equation*}

In a similar fashion as in the case of $a=0$ and $b=0$, here, we divide our problem into three cases, $q<q_c$ (mass-subcritical), $q=q_c$ (mass-critical), and $q>q_c$ (supercritical). For the mass-subcritical case, we prove the multiplicity of normalized solutions, while for the mass-critical/supercritical case, we prove the existence of a ground state solution of the mountain pass type.

The structure of this paper is as follows. In Section 2, we present the preliminary results necessary for our analysis, the variational framework, and our main results. Section 3 is devoted to establishing the compactness results essential for the variational framework. Section 4 investigates the existence of the first solution in the mass-subcritical case. Sections 5 and 6 address the mass-critical and mass-supercritical cases, respectively. Section 7 examines the presence of a second solution within the mass-subcritical regime. Finally, technical estimates are collected in an appendix.
 
\section{Functional framework and main results}
This section of the article is intended to provide the weak Sobolev space setting. Further in this section, we state the main results
of the current article, accompanied by a brief sketch of the proof.

For $2\le q<\infty$, we denote by $L^q(\R^N;|x|^{-bq})$ the weighted Lebesgue space
\[
L^q(\R^N;|x|^{-bq})
:=\left\{u:\R^N\to\R \ \text{measurable} \; :\;
\int_{\R^N}|x|^{-bq}|u|^q\dx<\infty\right\},
\]
endowed with the norm
\[
\|u\|_{L^q(\R^N;|x|^{-bq})}
:=\left(\int_{\R^N}|x|^{-bq}|u|^q\dx\right)^{1/q}.
\]
Define
\[
L^q_r(\R^N;|x|^{-bq})
:=\{u\in L^q(\R^N;|x|^{-bq}) : u \text{ is radial}\}.
\]
The space  $\mathcal{D}_{a}^{1,2}(\R^N)$ denotes the completion of $C_0^{\infty}(\R^N)$ with respect to the weighted norm
$$\|u\|_a:=\left(\int_{\R^N}|x|^{-2a}|\na u|^2\dx\right)^{1/2},$$
and $\mathcal{D}_{a,r}^{1,2}(\R^N):=\{u\in\mathcal{D}_{a}^{1,2}(\R^N):  u \text{ is radial}\}$. Define the function space $\mathcal{X}$ is given by
$$\mathcal{X}:=\mathcal{D}_{a,r}^2(\R^N)\cap L^2_r(\R^N;|x|^{-2a}),$$ 
In the next proposition, we prove that the space $\mathcal{X}$ is compactly embedded in $ L^q_r(\mathbb{R}^N; |x|^{-qb} )$. 
\begin{prop}[Compact Radial Embedding]\label{Y1}
Let \( N \geq 3 \), \( 0 < a < \frac{N-2}{2} \), and \( a < b < 1 + a \). Then, for any \( q  \in (2, \t) \) the embedding
\[
\mathcal{X} \hookrightarrow L^q_r(\mathbb{R}^N; |x|^{-qb} )
\]
is compact.
\end{prop}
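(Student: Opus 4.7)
The plan is to combine a Strauss-type radial decay estimate with a splitting of $\mathbb{R}^N$ into three regions: a small ball near the origin, a large-ball complement at infinity, and a spherical annulus in between. Tails are controlled by pointwise and size estimates, while the bulk on the annulus yields compactness via Rellich--Kondrachov.

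I would start with a bounded sequence $\{u_n\}\subset \mathcal{X}$, extract a subsequence with $u_n\rightharpoonup u$ weakly in $\mathcal{X}$, and by linearity reduce to $u=0$. The central analytic tool is the radial pointwise bound
\[
|u(x)|\le C\,|x|^{-(N-2-2a)/2}\,\|u\|_{\mathcal{X}},\qquad x\neq 0,
\]
valid for every radial $u\in\mathcal{X}$. To derive it I would start from the identity $r^{N-2-2a}u(r)^2 = -\int_{r}^{\infty}\frac{d}{ds}\!\left[s^{N-2-2a}u(s)^2\right]\,ds$ (justified on a dense class of smooth compactly supported test functions), expand the derivative, discard the nonpositive contribution coming from $(N-2-2a)\int_r^\infty s^{N-3-2a}u^2\,ds$, apply Cauchy--Schwarz to the remaining $\int_r^\infty s^{N-2-2a}uu'\,ds$, and close the estimate with the weighted 1D Hardy inequality $\int_0^\infty s^{N-3-2a}u^2\,ds\le C\int_0^\infty s^{N-1-2a}|u'|^2\,ds$, which is available precisely because $0<a<(N-2)/2$.

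Equipped with this decay, I would control the two tails. For $R\ge 1$, the pointwise bound gives $|u_n(x)|^{q-2}\le CR^{-(N-2-2a)(q-2)/2}\|u_n\|_{\mathcal{X}}^{q-2}$ on $\{|x|\ge R\}$, and $\int_{|x|\ge R}|x|^{-qb}|u_n|^2\,\dx\le\int|x|^{-2b}|u_n|^2\,\dx\le C\|u_n\|_{\mathcal{X}}^2$ by \eqref{c1} at $q=2$ (whose exponent $\delta_2=b-a$ lies in $(0,1)$), so the tail at infinity goes to $0$ uniformly in $n$ as $R\to\infty$. For $\epsilon\to 0$, H\"older with exponents $2^\sharp/q$ and $2^\sharp/(2^\sharp-q)$ combined with the critical CKN embedding $\|u_n\|_{L^{2^\sharp}(\mathbb{R}^N;|x|^{-2^\sharp b})}\le C\|u_n\|_{\mathcal{X}}$ yields
\[
\int_{|x|\le\epsilon}|x|^{-qb}|u_n|^q\,\dx \le C\,\|u_n\|_{\mathcal{X}}^{q}\,\epsilon^{\,N(1-q/2^\sharp)},
\]
which is also small uniformly in $n$ because $q<2^\sharp$. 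On the annulus $A_{\epsilon,R}=\{\epsilon\le|x|\le R\}$, both weights $|x|^{-2a}$ and $|x|^{-qb}$ are pinched between positive constants, so $\mathcal{X}$ restricted to $A_{\epsilon,R}$ embeds continuously into $H^1(A_{\epsilon,R})$, and Rellich--Kondrachov gives compactness into $L^q(A_{\epsilon,R})$ for $q<2^*=2N/(N-2)$. Since $d=1+a-b\in(0,1)$ forces $2^\sharp<2^*$, the whole range $q\in(2,2^\sharp)$ is covered and $u_n\to 0$ in $L^q(A_{\epsilon,R};|x|^{-qb})$.

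Putting the three estimates together---sending first $n\to\infty$ on the fixed annulus, then $\epsilon\to 0$ and $R\to\infty$---produces $u_n\to 0$ strongly in $L^q(\mathbb{R}^N;|x|^{-qb})$, which is the stated compactness. The main obstacle I foresee is the radial decay estimate: the correct exponent $(N-2-2a)/2$ must be identified, the telescoping identity set up so that the leftover term is amenable to Cauchy--Schwarz, and the weighted Hardy inequality invoked at the right stage. Once that is in place, the H\"older--CKN argument near the origin and the Rellich--Kondrachov argument on the annulus are essentially textbook, and the splitting/limit procedure is routine.
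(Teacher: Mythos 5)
Your proof is correct, but it takes a genuinely different route from the paper's. The paper splits $\mathbb{R}^N$ into only two regions, $B_R$ and its complement: on the ball it invokes a known weighted local compact embedding (Xuan, Theorem 2.1) to absorb the singularity of $|x|^{-qb}$ at the origin, and on the exterior it uses the same Strauss-type radial decay $|u(r)|\le C r^{-(N-2-2a)/2}\|u\|_a$ that you derive, obtained there more directly by writing $u(r)=-\int_r^\infty u'(s)\,ds$ and applying Cauchy--Schwarz with the weight $s^{N-1-2a}$ (no Hardy inequality needed, whereas your telescoping identity requires the weighted one-dimensional Hardy inequality to close). Your three-region decomposition replaces the citation of the local weighted compactness result by an explicit smallness estimate near the origin (H\"older against the critical CKN norm, giving the factor $\epsilon^{N(1-q/\t)}$) together with classical unweighted Rellich--Kondrachov on the annulus, where both weights are comparable to constants; the observation $\t<2^*$ (since $d\in(0,1)$) is exactly what makes this work. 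What your approach buys is self-containedness --- every ingredient is either the CKN inequality already stated in the paper or a textbook fact --- at the cost of one extra region and the density/extension argument needed to upgrade the pointwise decay from smooth radial functions to all of $\mathcal{X}$, which you correctly flag but should carry out (spherical averaging of an approximating sequence in $C_0^\infty$ preserves radiality and the weighted norms, and the decay estimate makes the sequence locally uniformly Cauchy away from the origin). All the exponent checks ($\delta_2=b-a\in(0,1)$ for the $q=2$ CKN bound, $|x|^{-qb}\le|x|^{-2b}$ for $|x|\ge 1$, and $N(1-q/\t)>0$) are sound.
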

\begin{proof}
Let $\{u_n\}$ be a bounded sequence in $\mathcal{X}$. Then there exists a function
$u\in\mathcal{X}$ and a subsequence $\{u_{n_k}\}$ such that
$u_{n_k}\rightharpoonup u$ weakly in $\mathcal{X}$.
We claim that $u_{n_k}\to u$ strongly in
$L^q_r(\mathbb{R}^N;|x|^{-bq})$. Fix  $q\in(2,\t)$, then  we get 
\begin{align}\label{1}
   \notag \int_{\R^N}\frac{|u_{n_k}-u|^q}{|x|^{qb}}\dx &= \int_{B_R}\frac{|u_{n_k}-u|^q}{|x|^{qb}}\dx+\int_{\R^N\backslash B_R}\frac{|u_{n_k}-u|^q}{|x|^{qb}}\dx\\
    \le&\int_{B_R}\frac{|u_{n_k}-u|^q}{|x|^{qb}}\dx+\bigg\|\frac{u_{n_k}-u}{|x|^{\frac{qb-2a}{q-2}}}\bigg\|_{L^\infty\{|x|>R\}}^{q-2}\bigg\|\frac{u_{n_k}-u}{|x|^{a}}\bigg\|^q_2.
\end{align}
Using the \cite[Theorem 2.1]{xuan2005solvability}, we obtain $u_{n_k}\to u$ in $L^q(B_R;|x|^{-bq})$. Hence, for sufficiently large $k$, we have
\begin{equation}\label{4}
    \int_{B_R}\frac{|u_{n_k}-u|^q}{|x|^{bq}}\dx\le \frac{\var}{2}. 
\end{equation}
As the  sequence $u_{n_k}$ is radial ($|x|=r$), 
$$ \frac{u_{n_k}(r)}{r^{\frac{qb-2a}{q-2}}}=-\frac{1}{r^{\frac{qb-2a}{q-2}}}\int_{r}^{\infty}u_{n_k}'(s)ds.$$
Using H\"{o}lder's inequality, we obtain

\begin{align*}
   \bigg| \frac{u_{n_k}(r)}{r^{\frac{qb-2a}{q-2}}}\bigg|&\le\frac{1}{r^{\frac{qb-2a}{q-2}}}\int_{r}^{\infty}|u_{n_k}'(s)|ds\\
   &\le
    \frac{1}{r^{\frac{qb-2a}{q-2}}}\int_{r}^{\infty} s^{\frac{2a-N+1}{2}}s^{\frac{N-1-2a}{2}}|u_{n_k}'(s)|ds\\
    &\le \frac{1}{r^{\frac{qb-2a}{q-2}}}\left(\int_r^{\infty}s^{2a-N+1}ds\right)^{1/2}\left(\int_{r}^{\infty}s^{N-1-2a}|u_{n_k}'(s)|^2ds\right)^{1/2}.
  \end{align*}
  Since $\displaystyle\int_{\R^N}\frac{|\na u_{n_k}|^2}{|x|^{2a}}<\infty$, this implies $\int_{r}^{\infty}s^{N-1-2a}|u_{n_k}'(s)|^2ds<\infty$. Therefore
  \begin{align*}
      \bigg|\frac{u_{n_k}(r)}{r^{\frac{qb-2a}{q-2}}}\bigg|&<\frac{C}{r^{\frac{qb-2a}{q-2}}}\left(\int_{R}^{\infty}s^{2a-N+1}\right)^{1/2}\\
      &=\frac{C}{r^{{\frac{qb-2a}{q-2}}+N-2(1+a)}}.
  \end{align*} 
 
  By taking R large, we have
  \begin{equation}\label{3}
      \int_{\R^N\backslash B_R}\frac{|u_{n_k}-u|^q}{|x|^{qb}}\dx<\frac{\var}{2}.
  \end{equation}
  Substituting \eqref{4} and \eqref{3} into \eqref{1}, we obtain the desired result.\QED
\end{proof}
Since $\mathcal{X}$ is not compactly embedded in   $ L^{\t}_r(\mathbb{R}^N; |x|^{-{\t b}} )$.  Catrina  and Wang \cite{catrina2001caffarelli} studied the  extremals of the embedding $q= \t$  and defined the  best Caffarelli--Kohn--Nirenberg constant \cite{catrina2001caffarelli,bouchez2009extremal} 
\begin{equation*}
    \mathcal{S}(a,b):=\inf_{u\in\mathcal{D}_a^{1,2}(\R^N)}\frac{\||x|^{-a}\na u\|_2^2}{\||x|^{-b}u\|
    _{\t}^{2}}.
\end{equation*}
In \cite{catrina2001caffarelli}, Catrina and Wang  proved that the minimizer of the $\mathcal{S}(a,b)$  belong to the family \(\{U_\varepsilon\}_{\varepsilon>0}\), given by
\begin{equation}\label{c96}
U_{\varepsilon}(x)=\frac{\left(2\t A\varepsilon\right)^{\frac{N-2d}{4d}}}{\left(\varepsilon+|x|^{\alpha}\right)^{\frac{N-2d}{2d}}},
\end{equation}
where \(A=\left(\frac{N-2}{2}-a\right)^2\)
and \( \alpha=\frac{2d(N-2-2a)}{N-2d}\). 
The  energy functional associated with the problem \eqref{eq:P}, $\e : S_\rho :=\{u\in \mathcal{X}:\||x|^{-a}u\|_2^2=\rho^2\} \subset \mathcal{X} \ra \R$ defined as
\begin{equation*}
    \e(u)=\frac{1}{2}\||x|^{-a}\na u\|_2^2-\frac{\beta}{q}\||x|^{-b}u\|_q^q-\frac{1}{\t}\||x|^{-b}u\|_{\t}^{\t}. 
\end{equation*}
Clearly, the functional $\e$ is well-defined and $C^1$. 
The main difficulty while dealing with the functional $\e$ is that it is unbounded from below on $\S$, so the minimization technique cannot be applied directly. Inspired by \cite{soave2020normalized1}, we construct Pohozaev manifold as follows
\begin{equation*}
    \mathcal{M}_{\rho,\beta}:=\{u\in S_{\rho}:P_\beta(u)=0\},
\end{equation*}
where
\begin{align*}
    P_{\beta}(u)=\||x|^{-a}\na u\|_2^2-\beta\delta_q\||x|^{-b}u\|_q^q-\||x|^{-b}u\|_{\t}^{\t}.
    \end{align*}

The properties of $P_{\beta}$ are closely connected with the minimax
structure of $\e\big|_{S_\rho}$ and, in particular, with the behavior of $\e$
under dilations that preserve the mass.  
More precisely, for $u\in S_\rho$ and $t\in\mathbb R$, define
\begin{equation*}
     (t\star u)(x)=e^{\left(\frac{N-2a}{2}\right)t}u(tx). 
     \end{equation*}
Then $t\star u\in S_\rho$, and it is natural to study the associated fiber maps
\begin{align*}
  \Phi_{\beta}^u(t):= \frac{e^{2t}}{2}\||x|^{-a}\na u\|_2^2-\frac{\beta e^{q\delta_qt}}{q}\||x|^{-b}u\|_q^q-\frac{e^{\t t}}{\t}\||x|^{-b}u\|_{\t}^{\t}.
\end{align*}

\begin{co}\label{CA1}
    For $u\in\S$, $t$ is a critical point of $\f$ if and only if $t\star u\in\M$.
\end{co}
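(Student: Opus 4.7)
The strategy is to show the pointwise identity $(\Phi_\beta^u)'(t) = P_\beta(t\star u)$ for every $u\in S_\rho$ and every $t\in\R$. Once this is in hand, the corollary is immediate from the definition of $\mathcal{M}_{\rho,\beta}$ together with the fact that the dilation $t\star\cdot$ leaves the mass $\||x|^{-a}\cdot\|_2^2$ invariant (so that $t\star u\in S_\rho$ automatically). The plan consists of three short steps: record the scaling behaviour of each weighted norm under $\star$, differentiate $\Phi_\beta^u$ explicitly, and match the derivative with $P_\beta(t\star u)$.

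First, I would verify the scaling identities. Writing $v:=t\star u$ and performing the change of variables $y=e^{t}x$ (absorbing the prefactor $e^{\frac{N-2a}{2}t}$ in the appropriate power), a direct computation gives the mass-preserving identity $\||x|^{-a}v\|_2^2 = \||x|^{-a}u\|_2^2$, together with
$$\||x|^{-a}\nabla v\|_2^2 = e^{2t}\,\||x|^{-a}\nabla u\|_2^2, \qquad \||x|^{-b}v\|_q^q = e^{q\delta_q t}\,\||x|^{-b}u\|_q^q,$$
where the exponent $q\delta_q = \frac{q(N-2a)}{2}+bq-N = \frac{(N-2a+2b)q-2N}{2}$ matches the definition of $\delta_q$ from the Caffarelli--Kohn--Nirenberg inequality. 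For the critical power one checks that $\delta_{2^\sharp}=1$ (using $2^\sharp=\frac{2N}{N-2-2a+2b}$), so the same calculation yields $\||x|^{-b}v\|_{2^\sharp}^{2^\sharp} = e^{2^\sharp t}\,\||x|^{-b}u\|_{2^\sharp}^{2^\sharp}$. Substituting these identities into $\mathcal{E}_\beta$ reproduces exactly the given formula for $\Phi_\beta^u(t)$; in particular $\Phi_\beta^u(t)=\mathcal{E}_\beta(t\star u)$.

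Second, differentiating the explicit expression for $\Phi_\beta^u$ in $t$ yields
$$(\Phi_\beta^u)'(t) = e^{2t}\,\||x|^{-a}\nabla u\|_2^2 - \beta\delta_q e^{q\delta_q t}\,\||x|^{-b}u\|_q^q - e^{2^\sharp t}\,\||x|^{-b}u\|_{2^\sharp}^{2^\sharp}.$$
By the three scaling identities above, the right-hand side equals $\||x|^{-a}\nabla v\|_2^2 - \beta\delta_q\,\||x|^{-b}v\|_q^q - \||x|^{-b}v\|_{2^\sharp}^{2^\sharp} = P_\beta(t\star u)$, which is the identity sought.

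Finally, combining the two pieces, $t$ is a critical point of $\Phi_\beta^u$ if and only if $(\Phi_\beta^u)'(t)=0$, i.e.\ $P_\beta(t\star u)=0$; since $t\star u$ already belongs to $S_\rho$ by the mass-preserving property, this is precisely the statement $t\star u\in\mathcal{M}_{\rho,\beta}$. There is no real obstacle in the proof: the only subtle point is bookkeeping of the scaling exponents (in particular, the identity $\delta_{2^\sharp}=1$ that makes the critical term scale as $e^{2^\sharp t}$), and the fact that $\delta_q$ is exactly the exponent arising naturally from the dilation, which explains why $P_\beta$ has been defined with the weight $\delta_q$ in front of the subcritical term.
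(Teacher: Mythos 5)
Your proof is correct and is exactly the computation the paper intends (the corollary is stated without proof there): the identity $(\Phi_\beta^u)'(t)=P_\beta(t\star u)$, obtained from the scaling of the three weighted norms and the fact that $\delta_{2^\sharp}=1$, together with mass invariance of $t\star u$, gives the equivalence immediately. Note only that your change of variables $y=e^t x$ tacitly reads the paper's dilation as $u(e^t x)$ rather than the literal $u(tx)$, which is the only reading consistent with the stated formula for $\Phi_\beta^u$.
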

Critical points of $  \Phi_{\beta}^u$ will be used to project a function onto
$\mathcal{M}_{\rho,\beta}$.  
Consequently, the monotonicity and convexity features of $  \Phi_{\beta}^u$
strongly influence the structure of $\mathcal{M}_{\rho,\beta}$ (and thus the
geometry of $\e\big|_{S_\rho}$).
With help of fiber map $\f$, we can decompose $\mathcal{M}_{\rho,\beta}$ into three disjoint sub-manifolds 
\begin{equation*} 
\mathcal{M}_{\rho,\beta}=\mathcal{M}_{\rho,\beta}^+\cup\mathcal{M}_{\rho,\beta}^-\cup\mathcal{M}_{\rho,\beta}^0,
\end{equation*}
where
\begin{align*}
    \mathcal{M}_{\rho,\beta}^+:=\{u\in \mathcal{M}_{\rho,\beta}:
    (P_{\beta})''(0)>0\},\\
    \mathcal{M}_{\rho,\beta}^-:=\{u\in \mathcal{M}_{\rho,\beta}:(P_{\beta})''(0)<0\},\\
    \mathcal{M}_{\rho,\beta}^0:=\{u\in \mathcal{M}_{\rho,\beta}:(P_{\beta})''(0)=0\}.
\end{align*}
The threefold splitting of the Pohozaev manifold plays a central structural role in the variational analysis of normalized solutions. By decomposing $\mathcal{M}_{\rho,\beta}$ into $\mathcal{M}_{\rho,\beta}^+$, $\mathcal{M}_{\rho,\beta}^0$, and $\mathcal{M}_{\rho,\beta}^-$, one separates regions where the energy functional behaves qualitatively differently along the mass-preserving dilations.  We now define the following constants, which are necessary for our further results.  
\[\beta_1:=\frac{q(\t-2)}{\C\rho^{(1-\delta_q)q}(\t-q\delta_q)}\left(\frac{\t\mathcal{S}(a,b)^{\t/2}(2-q\delta_q)}{2(\t-q\delta_q)}\right)^{\frac{2-q\delta_q}{\t-2}}\] and \[\beta_2:=\frac{2\t}{N\delta_q\C(\t-\delta_qq)\rho^{(1-\delta_q)q}}\left(\frac{\delta_qq\sr^{N/2d}}{2-\delta_qq}\right)^{\frac{2-\delta_qq}{2}}.\]
With these preliminaries, we state our main results. 
\begin{theorem}\label{C1}
    Let $N>2,~\rho,~\beta>0$, $0<a<\frac{N-2}{2},~a<b<1+a$ and $2<q<q_c$. If there exists a constant ${\beta}_*=\min\{\beta_1,\beta_2\}$, then
    \begin{itemize}
        \item[(i)] 
    $\e|_{\S}$ has a positive radial ground state solution $\tilde{u}$, for some  $\lambda<0$. Moreover, $\e(\tilde{u})<0$ and $\tilde{u}$ is an interior local minimizer of $\mathcal{E}_{\beta}$ on the set         
        $$\mathcal{G}_k:=\{u\in\S:\n\|<k\},$$
        for suitable $k>0$ small enough.
     \item[(ii)] $\e|_{\S}$ has a second positive radial critical point ${u}_0$ with $\e({u}_0)>0$, for suitable ${\lambda}_0<0$. \end{itemize}    
        \end{theorem}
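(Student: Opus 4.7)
The plan is to follow the now-standard Brezis--Nirenberg/Soave scheme adapted to the weighted CKN setting, exploiting the fiber map $\f$ together with Proposition~\ref{Y1} (compactness for subcritical weighted Lebesgue spaces) and the family of extremals $U_\var$ in \eqref{c96} to beat the compactness threshold coming from the critical term.

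For part (i), I would first analyze the shape of $\f$ on $\S$. Substituting the CKN inequality \eqref{c1} and the definition of $\sr$ into the expression for $\e(t\star u)=\f(t)$, one obtains an upper bound of the form
\[
\f(t)\ge \tfrac12 e^{2t}\|\,|x|^{-a}\na u\|_2^{2}-\tfrac{\beta\C\rho^{(1-\de_q)q}}{q}\,e^{q\de_q t}\|\,|x|^{-a}\na u\|_2^{q\de_q}-\tfrac{1}{\t\,\sr^{\t/2}}\,e^{\t t}\|\,|x|^{-a}\na u\|_2^{\t}.
\]
Because $q<q_c$ forces $q\de_q<2$, the first term dominates near $t=-\infty$ while the critical term dominates near $t=+\infty$; an elementary study of the associated one-variable function, using $\beta<\beta_*=\min\{\beta_1,\beta_2\}$ (where $\beta_1$ and $\beta_2$ are tuned precisely so the max of this three-term function lies above $0$ and the local min lies below $0$), shows that $\f$ has exactly two critical points $t_1(u)<t_2(u)$, a local minimum and a local maximum, and that there exist $0<k_0<k_1$ with $h(k):=\sup_{\|\,|x|^{-a}\na u\|_2=k}\e(u)<0$ for $k<k_0$ while $\inf\{\e(u): \|\,|x|^{-a}\na u\|_2=k_1\}>0$. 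Choose $k\in(k_0,k_1)$ and define $m_\beta:=\inf_{\mathcal{G}_k}\e$. A minimizing sequence is obtained via Ekeland's principle on the open set $\mathcal{G}_k$, and by projecting onto $\M^+$ via $t\mapsto t\star u$ we produce a Palais--Smale sequence at the negative level $m_\beta<0$ staying inside $\mathcal{G}_k$. The Lagrange multiplier structure combined with $P_\beta=0$ and $m_\beta<0$ forces $\lambda<0$, and since the PS sequence is bounded in $\mathcal{X}$ and stays away from the critical threshold $\tfrac{d}{N}\sr^{N/(2d)}$, Proposition~\ref{Y1} plus a standard Brezis--Lieb / concentration--compactness argument for the critical term yield strong convergence to a positive radial minimizer $\tilde u$; radial positivity follows by symmetric decreasing rearrangement (or passing to $|\tilde u|$, which has the same energy).

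For part (ii), the second solution is sought as a mountain pass critical point on $\mathcal{M}_{\rho,\beta}^-$. Using the fiber structure from Corollary~\ref{CA1}, one sets
\[
c_0:=\inf_{u\in\S}\max_{t\in\R}\e(t\star u)=\inf_{v\in\mathcal{M}_{\rho,\beta}^-}\e(v)>0,
\]
and verifies the mountain pass geometry on $\S$ by joining a point in $\mathcal{G}_k$ (where $\e<0$) to a point with $\e<0$ at large $t$, crossing the ``mountain'' $\{\|\,|x|^{-a}\na u\|_2=k_1\}$. A Jeanjean-type argument on the augmented functional $\tilde\e(t,u)=\e(t\star u)$ yields a Palais--Smale sequence $\{u_n\}\subset\S$ with $P_\beta(u_n)\to 0$. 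The main obstacle, and the heart of the argument, is to establish the strict upper bound
\[
c_0\;<\;\e(\tilde u)+\tfrac{d}{N}\sr^{N/(2d)},
\]
which is the threshold below which the critical-term concentration is ruled out. I would prove this by testing with translated/dilated profiles built from the CKN extremals, namely $w_\var=\tilde u+\eta\, U_\var$ suitably truncated and renormalized to live on $\S$, and performing the delicate asymptotic expansion of $\max_{t\in\R}\e(t\star w_\var)$ as $\var\to 0$ (the computations are relegated to the appendix). Once $c_0$ sits below this threshold, the PS sequence converges strongly (after extracting a Lagrange multiplier $\lambda_0$ and using Brezis--Lieb together with Proposition~\ref{Y1}) to a positive radial critical point $u_0\in\mathcal{M}_{\rho,\beta}^-$ with $\e(u_0)=c_0>0$ and $\lambda_0<0$ by the Pohozaev identity.

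The principal difficulty is the last step: producing the strict energy gap $c_0<\e(\tilde u)+\tfrac{d}{N}\sr^{N/(2d)}$. Unlike the classical Sobolev case, the weights $|x|^{-2a}$ and $|x|^{-b\t}$ interact with the dilation $t\star\cdot$ in a nontrivial way, and the interaction terms between $\tilde u$ and $U_\var$ must be expanded to the right order in $\var$ using the explicit exponents in \eqref{c96}; keeping track of the subcritical lower-order term $\beta\|\,|x|^{-b}\cdot\|_q^q$ (which has sign compatible with the bound only in the subcritical regime $q<q_c$) is what ultimately allows the gap to close.
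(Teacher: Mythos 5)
Your part (i) follows essentially the same route as the paper: study of the lower-bound function obtained from \eqref{c1}, local minimization on $\mathcal{G}_k$, Ekeland plus projection onto $\mathcal{M}^+_{\rho,\beta}$ to get a Palais--Smale sequence with $P_\beta(u_n)\to 0$ at the negative level, and exclusion of the splitting alternative of Proposition~\ref{C32}. One point you leave implicit but which is where $\beta_2$ actually enters: ruling out alternative (I) is not just ``the level is below $\tfrac{d}{N}\sr^{N/2d}$''; the paper must show that any nontrivial weak limit $u$ with $P_\beta(u)=0$ satisfies $\e(u)>-\tfrac{d}{N}\sr^{N/2d}$ (via the auxiliary function $\tilde g$ and $\beta<\beta_2$), so that $\e(u)\le m_\beta(\rho)-\tfrac{d}{N}\sr^{N/2d}$ contradicts $m_\beta(\rho)<0$. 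Your phrasing ``stays away from the critical threshold'' papers over this, but the idea is the same.

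For part (ii) you take a genuinely different route to the critical point --- a Jeanjean-type mountain pass on the augmented functional $\tilde\e(t,u)=\e(t\star u)$ --- whereas the paper minimizes $\e$ directly over $\mathcal{M}^-_{\rho,\beta}$ and runs a splitting argument on the minimizing sequence. Your identification of the key strict inequality $c_0<\e(\tilde u)+\tfrac{d}{N}\sr^{N/2d}$ and of the test functions $\tilde u+t u_\var$ matches the paper exactly. However, there is a concrete gap in your compactness step: when the Palais--Smale (or minimizing) sequence splits, the nontrivial weak limit $\hat u$ need only satisfy $\||x|^{-a}\hat u\|_2=\hat\rho\le\rho$, i.e.\ mass may be lost along with the concentrating bubble. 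The contradiction you want --- that a solution with $P_\beta=0$ and energy $<m_\beta(\rho)$ cannot exist --- only follows if you know $\e(\hat u)\ge m_\beta(\hat\rho)\ge m_\beta(\rho)$, and the second inequality is exactly the monotonicity of the constrained level in the mass, which is not automatic and which the paper establishes separately (Lemma~\ref{C96}, via an implicit-function computation of $t'_\pm(\rho)$ showing $\tfrac{d}{d\tilde\rho}\e(t_\pm(\tilde\rho)\star v_{\tilde\rho})<0$, together with Lemma~\ref{C97} to control the bubble's contribution). Without this ingredient your argument does not close; with it, your mountain-pass route would also work, at the cost of verifying that the minimax level of the augmented functional coincides with $\inf_{\mathcal{M}^-_{\rho,\beta}}\e$, which you assert but which also requires the fiber-map analysis of Lemma~\ref{C12}.
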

        \begin{theorem}\label{CC1}
            Let \(N>2,~ \rho, ~\beta>0\), \(0<a<\frac{N-2}{2},~ a<b<1+a\), and \(q_c\le q<\t \). If  \(\beta<\beta^*\), where
            \begin{equation*}
                \beta^*:=\begin{cases}
            \frac{q_c}{\rho^{\frac{4d}{N-2a+2b}}2C_{a,b}^{q_c}},\quad&\text{if} ~q=q_c,\\
            \frac{\sr^{\frac{N(2-q\delta_q)}{2d}}}{\C\rho^{(1-\delta_q)q}\delta_q},&\text{if}~\frac{N}{N-2(1+a)+b}<q_c<q<\t~\text{and}~0<a<\max\{\frac{N-4}{2},0\},\\
            +\infty,&\text{otherwise.}
            \end{cases}
            \end{equation*}
            For $N=3$ and $\frac{10}{3-2a+2b}\le q<\frac{3}{1-2a+b},$ additionally assume that $(b-a)>\frac{1}{6}$. Then, \(\e|_{\S}\) has a ground state \(\tilde{u}\) which is a positive and radial function that solves \eqref{eq:P} for some \(\lambda<0\). Moreover, \(0<\e(\tilde{u})<\frac{d}{N}\sr^{N/2d}\), and \(\tilde{u}\) is a Mountain Pass type solution.
        \end{theorem}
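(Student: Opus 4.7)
In the mass-critical and supercritical regimes the functional $\mathcal{E}_\beta$ is unbounded from below on $S_\rho$, so the direct minimization used in Theorem \ref{C1}(i) fails and must be replaced by a Jeanjean-type mountain pass scheme built over the Pohozaev submanifold. The key preliminary step is to show that, under the smallness condition $\beta<\beta^*$, for every $u\in S_\rho$ the fiber map $\Phi_\beta^u$ admits a unique critical point $t_u\in\mathbb{R}$, which is a strict non-degenerate maximum. Using the CKN inequality \eqref{c1} and the best constant $\mathcal{S}(a,b)$ to control the sub-critical and critical contributions against the kinetic one, each of the cases $q=q_c$, $q_c<q<2^\sharp$, and the three-dimensional range of the theorem forces the specific form of $\beta^*$ (and, for $N=3$, the condition $b-a>1/6$) in order to guarantee strict monotonicity of $(\Phi_\beta^u)'$. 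Uniqueness of $t_u$ yields $\mathcal{M}_{\rho,\beta}=\mathcal{M}_{\rho,\beta}^-$, and the assignment $u\mapsto t_u\star u$ is a continuous retraction of $S_\rho$ onto $\mathcal{M}_{\rho,\beta}^-$, which identifies the natural minimax value $m_{\rho,\beta}:=\inf_{\mathcal{M}_{\rho,\beta}^-}\mathcal{E}_\beta$.

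The mountain pass geometry is then set up as follows. For any fixed $u\in S_\rho$ one has $\Phi_\beta^u(s)\to 0$ as $s\to-\infty$ and $\Phi_\beta^u(s)\to-\infty$ as $s\to+\infty$, with a unique maximum $\Phi_\beta^u(t_u)\ge m_{\rho,\beta}>0$. Picking $s_-\ll 0\ll s_+$ with $\Phi_\beta^u(s_\pm)<m_{\rho,\beta}$ and considering the dilation paths $[s_-,s_+]\ni s\mapsto s\star u$ produces a bona fide mountain pass whose minimax value $c_{MP}$ coincides with $m_{\rho,\beta}$, since every admissible path must cross $\mathcal{M}_{\rho,\beta}^-$. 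Working with the augmented functional $(u,s)\mapsto\mathcal{E}_\beta(s\star u)$ on $S_\rho\times\mathbb{R}$ in the spirit of Jeanjean then produces a Palais--Smale sequence $\{u_n\}\subset S_\rho$ at level $m_{\rho,\beta}$ which additionally satisfies $P_\beta(u_n)\to 0$. On $\mathcal{M}_{\rho,\beta}$ the identity
\[
\mathcal{E}_\beta(u)=\beta\,\frac{q\delta_q-2}{2q}\,\|\,|x|^{-b}u\|_q^q+\frac{d}{N}\,\|\,|x|^{-b}u\|_{2^\sharp}^{2^\sharp}
\]
holds, and since both coefficients are non-negative for $q\ge q_c$, this combined with $P_\beta(u_n)\to 0$ and the mass constraint gives boundedness of $\{u_n\}$ in $\mathcal{X}$.

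The principal obstacle is restoring compactness at the CKN-critical exponent. The plan for this is twofold. First, I would establish the strict upper bound
\[
0<m_{\rho,\beta}<\frac{d}{N}\,\mathcal{S}(a,b)^{N/2d}
\]
by plugging the Catrina--Wang extremals $U_\varepsilon$ from \eqref{c96}, rescaled onto $S_\rho$, into $\mathcal{E}_\beta$ and carrying out a Brezis--Nirenberg-type expansion in $\varepsilon$: the $\beta$-term must produce a strictly negative correction of order larger than the weighted $L^2$-remainder coming from the mass normalization. The precise balance of these orders is exactly what drives the case-wise definition of $\beta^*$ and the extra hypothesis $b-a>1/6$ for $N=3$, and I expect this to be the technical heart of the proof (with the detailed asymptotics collected in the appendix announced in Section 1). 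Second, the bespoke concentration--compactness lemma of Section 3 would be applied to $\{u_n\}$; Proposition \ref{Y1} already gives strong convergence of the sub-critical term, so that the only possible loss of compactness is a CKN-critical bubble concentrating at the origin (or at infinity), which is ruled out by the strict upper bound.

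Passing to the limit yields a non-trivial $\tilde u\in\mathcal{X}$ with $\mathcal{E}_\beta(\tilde u)=m_{\rho,\beta}\in\bigl(0,\tfrac{d}{N}\mathcal{S}(a,b)^{N/2d}\bigr)$ and some Lagrange multiplier $\lambda\in\mathbb{R}$. Replacing $\tilde u$ by $|\tilde u|$ preserves the mass, the energy, and the Pohozaev identity, so the strong maximum principle for $-\mathrm{div}(|x|^{-2a}\nabla\,\cdot\,)$ gives positivity; radiality is automatic from working in $\mathcal{X}$. Finally, subtracting $P_\beta(\tilde u)=0$ from the tested weak form of \eqref{eq:P} and using $\delta_q<1$ for $q<2^\sharp$ gives
\[
\lambda\rho^2=\beta(\delta_q-1)\,\|\,|x|^{-b}\tilde u\|_q^q<0,
\]
while the identity $c_{MP}=m_{\rho,\beta}=\mathcal{E}_\beta(\tilde u)$ exhibits the ground state $\tilde u$ as a mountain pass type solution.
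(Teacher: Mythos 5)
Your plan follows the paper's proof essentially step for step: uniqueness of the fiber-map maximum giving $\mathcal{M}_{\rho,\beta}=\mathcal{M}_{\rho,\beta}^-$, the augmented functional $\tilde{\e}(t,u)=\e(t\star u)$ producing a Palais--Smale sequence with $P_\beta(u_n)\to0$ at the level $m_\beta(\rho)=\inf_{\mathcal{M}_{\rho,\beta}}\e$, the strict bound $m_\beta(\rho)<\frac{d}{N}\sr^{N/2d}$ via the truncated Catrina--Wang extremals (which is indeed where the case-wise $\beta^*$ and the $N=3$ condition $b-a>\frac16$ enter), and the concentration--compactness dichotomy of Proposition \ref{C32}. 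The one point to make explicit is that the strict energy bound alone does not rule out the splitting alternative when the weak limit is nonzero: one must also use that any nontrivial weak limit satisfies $P_\beta(\tilde u)=0$ and hence $\e(\tilde u)>0$ for $q\ge q_c$ (the identity you already wrote down for boundedness), which contradicts $\e(\tilde u)\le m_\beta(\rho)-\frac{d}{N}\sr^{N/2d}<0$.
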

        
         To prove Theorem \ref{C1}, the idea is that the interplay between a mass-subcritical term 
        $ \frac{|u|^{q}}{|x|^{bq}}$ and term $\frac{|u|^{\t}}{|x|^{b\t}}$ can create a nontrivial structure: small amplitudes are dominated by the subcritical part, while large amplitudes are dominated by the critical part, so that the functional may first decrease, then increase, and possibly decrease again along constrained rays. This type of behavior leads to the presence of a local minimum (the ground state) and a mountain-pass level above it, representing a second critical point on the sphere.

        Technically,  on $\mathcal{M}^+_{\rho,\beta}$, the functional has a local minimum and allows the construction of the ground state, whereas on $\mathcal{M}^-_{\rho,\beta}$ the geometry is of mountain-pass type, allowing the use of a constrained mountain-pass theorem to produce a second solution.

    While in the mass-critical/supercritical regime $q_c\leq q<\t,$ the fibre maps yield a genuine mountain-pass geometry on $\mathcal{M}_{\rho,\beta}$: one constructs paths in $\mathcal{M}_{\rho,\beta}$  that go from a region of small (positive) energy to energy less than threshold level, and defines a mountain-pass level $m_\beta(\rho)>0$ via the usual minimax over such paths.
	A constrained mountain-pass theorem then provides a Palais–Smale sequence $(u_n)\subset \mathcal{M}_{\rho,\beta}$ at level $m_\beta(\rho)>0 $, still satisfying the Pohozaev identity $P_\beta(u_n)=0$.

  
A primary challenge lies in retrieving compactness for Palais--Smale sequences at the mountain-pass level, owing to the critical exponent $\t$ which induces non-compactness through concentration at infinity or splitting phenomena. To surmount this difficulty, we derive meticulous energy estimates complemented by a concentration-compactness lemma specifically adapted to the mass-constrained variational setting. These tools collectively ensure that the mountain-pass critical value remains strictly below the threshold beyond which such pathological concentration phenomena manifest. Furthermore, the existence of Palais--Smale sequences attaining energies inferior to this mountain-pass level hinges on newly established asymptotic estimates for the minimizers of the functional $\mathcal S(a,b)$, detailed in the Appendix. These estimates constitute original contributions to the literature. As a direct consequence, our methods yield a second mass-normalized solution whose energy strictly surpasses that of the ground state whenever $q<q_c$.  Simultaneously, we obtain ground state solutions of positive energy in the intermediate regime $q_c\leq q<\t$. To the best of our knowledge, no previous study provides a unified treatment encompassing existence and multiplicity results across the full spectrum of parameter ranges. Thus, this work constitutes the first comprehensive treatment of such mass-normalized solutions for the genuinely weighted case $a\neq 0$, $b\neq 0$.

\section{Compactness analysis}
In the present section, we give some compactness results and focus on the behaviors of the Palais-Smale sequence of the functional $\e$.

\begin{prop}\label{C32}
    Let $N>2,~2<q<\t$, and $\beta,~\rho>0 $. Let $\{u_n\}\subset \S$ be a Palais-Smale sequence for $\e|_{\S}$ at a level m, with
    \[m<\frac{d}{N}\mathcal{S}(a,b)^{N/2d}\quad\text{and}\quad m\ne0.\]
    Suppose $P_{\beta}(u_n)\to0$ as $n\to\infty$. Then one of the following alternatives holds
    \begin{itemize}
        \item[(I)]either up to a subsequence $u_n\rightharpoonup u$  weakly in $\mathcal{X}$ but not strongly, where $u\ne 0$ is either a solution to \eqref{eq:P} for some $\lambda<0$, and \[\e(u)\le m-\frac{d}{N}\mathcal{S}(a,b)^{N/2d},\]
        \item[(II)] or up to a subsequence $u_n\to u$ strongly in $\mathcal{X}$, $\e(u)=m,$ and $u $ solves \eqref{eq:P} for some $\lambda<0.$
    \end{itemize}
\end{prop}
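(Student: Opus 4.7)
The plan is to extract a weak limit, identify it as a weak solution of \eqref{eq:P} via the Lagrange multiplier rule, and then apply a weighted Br\'ezis--Lieb decomposition combined with the sharp CKN constant $\mathcal{S}(a,b)$ to argue that either strong convergence holds or an energy quantum of size $\tfrac{d}{N}\mathcal{S}(a,b)^{N/2d}$ is absorbed in a critical profile, thereby enforcing the claimed dichotomy.

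First I would show that $\{u_n\}$ is bounded in $\mathcal{X}$. From $\mathcal{E}_\beta(u_n)\to m$ and $P_\beta(u_n)\to 0$, a convenient linear combination (for instance $\mathcal{E}_\beta(u_n)-\tfrac{1}{\t}P_\beta(u_n)$ in the mass-subcritical range and $\mathcal{E}_\beta(u_n)-\tfrac{1}{q\delta_q}P_\beta(u_n)$ in the mass-critical/supercritical range) produces a coercive control of $\||x|^{-a}\nabla u_n\|_2^2$ once \eqref{c1} is used, which together with the mass constraint yields boundedness in $\mathcal{X}$. Extracting a subsequence, $u_n\rightharpoonup u$ in $\mathcal{X}$ and, by Proposition \ref{Y1}, $u_n\to u$ strongly in $L^q_r(\R^N;|x|^{-bq})$. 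Since $\mathcal{E}_\beta|_{S_\rho}^{\,\prime}(u_n)\to 0$, Lagrange multipliers $\lambda_n$ appear with $\mathcal{E}_\beta^{\prime}(u_n)-\lambda_n|x|^{-2a}u_n\to 0$ in $\mathcal{X}^*$; testing against $u_n$ and using $P_\beta(u_n)\to 0$ gives
\begin{equation*}
\lambda_n\rho^2=\beta(\delta_q-1)\||x|^{-b}u_n\|_q^q+o(1),
\end{equation*}
and $\delta_q<1$ for $q<\t$ forces $\lambda_n\to\lambda\le 0$. Passing to the limit in the weak equation (weak convergence of the gradient, strong convergence for the subcritical term, a.e.\ convergence with Br\'ezis-type arguments for the critical term) proves that $u$ weakly solves \eqref{eq:P} with multiplier $\lambda$, hence satisfies the Pohozaev identity $P_\beta(u)=0$.

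Setting $v_n:=u_n-u$, the weighted Br\'ezis--Lieb lemma furnishes
\begin{equation*}
\||x|^{-a}\nabla u_n\|_2^2=\||x|^{-a}\nabla u\|_2^2+\||x|^{-a}\nabla v_n\|_2^2+o(1),
\end{equation*}
\begin{equation*}
\||x|^{-b}u_n\|_{\t}^{\t}=\||x|^{-b}u\|_{\t}^{\t}+\||x|^{-b}v_n\|_{\t}^{\t}+o(1),
\end{equation*}
whereas the subcritical term converges completely by Proposition \ref{Y1}. Combining with $P_\beta(u_n)-P_\beta(u)\to 0$ yields
\begin{equation*}
\||x|^{-a}\nabla v_n\|_2^2-\||x|^{-b}v_n\|_{\t}^{\t}=o(1).
\end{equation*}
Denote $\ell:=\lim_n\||x|^{-a}\nabla v_n\|_2^2$. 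The critical CKN inequality $\mathcal{S}(a,b)\||x|^{-b}v_n\|_{\t}^{2}\le\||x|^{-a}\nabla v_n\|_2^2$ together with the previous identity yields $\ell\ge\mathcal{S}(a,b)\,\ell^{2/\t}$, so either $\ell=0$ or $\ell\ge\mathcal{S}(a,b)^{N/2d}$. Inserting the Br\'ezis--Lieb decomposition into the energy functional gives
\begin{equation*}
\mathcal{E}_\beta(u_n)=\mathcal{E}_\beta(u)+\bigl(\tfrac{1}{2}-\tfrac{1}{\t}\bigr)\ell+o(1)=\mathcal{E}_\beta(u)+\tfrac{d}{N}\ell+o(1),
\end{equation*}
so $m=\mathcal{E}_\beta(u)+\tfrac{d}{N}\ell$. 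If $\ell=0$, convergence is strong in $\mathcal{X}$ and $\mathcal{E}_\beta(u)=m$, delivering alternative (II); if $\ell>0$, then $\mathcal{E}_\beta(u)\le m-\tfrac{d}{N}\mathcal{S}(a,b)^{N/2d}$, delivering alternative (I). The hypotheses $m\ne 0$ and $m<\tfrac{d}{N}\mathcal{S}(a,b)^{N/2d}$ exclude $u\equiv 0$, for otherwise $m=\tfrac{d}{N}\ell$ is incompatible with either branch, and the Lagrange multiplier identity then promotes $\lambda\le 0$ to $\lambda<0$.

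The main technical obstacle is the rigorous justification of the Pohozaev identity $P_\beta(u)=0$ for the weak limit $u$ in the degenerate weighted setting: the classical multiplication by $x\cdot\nabla u$ must be handled through careful cut-off truncations near the origin and at infinity combined with the regularity available for weak CKN solutions. A second delicate point is the validity of the weighted Br\'ezis--Lieb splittings in $\mathcal{D}_{a,r}^{1,2}(\R^N)$ and $L^{\t}_r(\R^N;|x|^{-b\t})$, which follows from a.e.\ convergence of $u_n$ and $\nabla u_n$ (itself a consequence of the local compactness of the weighted embedding) together with the classical Br\'ezis--Lieb argument adapted to the weighted $L^p$ framework; the radial structure of $\mathcal{X}$ is essential so that Proposition \ref{Y1} annihilates the subcritical profile of $v_n$, leaving only a critical bubble to be controlled by $\mathcal{S}(a,b)$.
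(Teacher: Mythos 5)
Your proposal follows essentially the same route as the paper: boundedness via the combination of $\e$ and $P_\beta$ with \eqref{c1}, Lagrange multipliers with $\lambda_n\rho^2=\beta(\delta_q-1)\||x|^{-b}u_n\|_q^q+o(1)$, the Br\'ezis--Lieb splitting for the gradient and critical terms, full convergence of the subcritical term via Proposition \ref{Y1}, and the dichotomy $\ell=0$ or $\ell\ge\mathcal{S}(a,b)^{N/2d}$. Your way of excluding $u\equiv 0$ (folding it into the energy decomposition $m=\e(u)+\tfrac{d}{N}\ell$ and using $m\ne 0$, $m<\tfrac{d}{N}\mathcal{S}(a,b)^{N/2d}$) is a mild streamlining of the paper's Step 3, which treats the case $u\equiv0$ separately before the decomposition; both are correct.

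There is one genuine gap: the assertion that $\ell=0$ implies strong convergence in $\mathcal{X}$. The quantity $\ell$ only controls $\||x|^{-a}\nabla v_n\|_2^2$ and $\||x|^{-b}v_n\|_{\t}^{\t}$, so $\ell=0$ gives strong convergence in $\mathcal{D}^{1,2}_{a,r}(\R^N)$ and in $L^{\t}_r(\R^N;|x|^{-b\t})$, but the $\mathcal{X}$-norm also contains $\||x|^{-a}\cdot\|_2$, and the embedding of $\mathcal{X}$ into $L^2_r(\R^N;|x|^{-2a})$ is not compact (Proposition \ref{Y1} only covers $2<q<\t$), so mass can a priori escape to infinity and one cannot conclude $\||x|^{-a}u\|_2^2=\rho^2$ from weak convergence alone. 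The paper closes this by testing \eqref{c4} and \eqref{c5} against $\phi=u_n-u$, which yields
\begin{equation*}
\int_{\R^N}|x|^{-2a}(\lambda_n u_n-\lambda u)(u_n-u)\dx\to 0,
\end{equation*}
and then uses $\lambda_n\to\lambda<0$ (which is where $u\not\equiv 0$ is essential) to deduce $\||x|^{-a}(u_n-u)\|_2\to 0$. You should add this step; without it alternative (II) as stated is not established.
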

\begin{proof}
The proof is divided into four parts.

\medskip\noindent
    \textbf{Step $1$) $\{u_n\}$ is bounded in $\mathcal{X}$}

    \medskip\noindent
        \textbf{Case $1$:} $q<q_c$.
        
       Using \eqref{c1} and the fact that $P_{\beta}(u_n)\to0$, we have
    \begin{align*}
        \e(u_n)&=\left(\frac{1}{2}-\frac{1}{\t}\right)\||x|^{-a}\na u_n\|_2^2-\beta\left(\frac{1}{q}-\frac{\delta_q}{\t}\right)\||x|^{-b}u_n\|_q^q\\
        &\ge \left(\frac{1}{2}-\frac{1}{\t}\right)\||x|^{-a}\na u_n\|_2^2-\beta \C \rho^{(1-\delta_q)q}\left(\frac{1}{q}-\frac{\delta_q}{\t}\right)\||x|^{-a}\na u_n\|_2^{\delta_qq}.
    \end{align*}
    Since $\e(u_n)\le m+1$ for large $n$, we have 
    \[\left(\frac{1}{2}-\frac{1}{\t}\right)\||x|^{-a}\na u_n\|_2^2\le \beta \C \rho^{(1-\delta_q)q}\left(\frac{1}{q}-\frac{\delta_q}{\t}\right)\||x|^{-a}\na u_n\|_2^{\delta_qq}+m+1, \]
    and this implies that $\{u_n\}$ is bounded.

    \medskip\noindent
    \textbf{Case $2$:} $q=q_c$. 
    
    Using the fact $P_{\beta}(u_n)\to0,$ we obtain
    \[m+1\ge \e(u_n)=\frac{d}{N}\||x|^{-b}u_n\|_{\t}^{\t}.\]
    Therefore $\||x|^{-b}u_n\|_{\t}<C$, for every large $n$. Using H\"{o}lder inequality, we can obtain that $\||x|^{-b}u_n\|_{q}^{q}$ is bounded and thus, we can obtain our required result.

    \medskip\noindent
    \textbf{Case $3$:} $q>q_c$.

    Clearly,
   \begin{align*}
    \e(u_n)&=\beta\left(\frac{\delta_q}{2}-\frac{1}{q}\right)\||x|^{-b}u_n|_q^q+\frac{d}{N}\||x|^{-b}u_n\|_{\t}^{\t},
    \end{align*}
   it gives $\{\||x|^{-b}u_n\|_q\}$ and $\{\||x|^{-b}u_n\|_{\t}\}$ are bounded. Using this and the fact $P_{\beta}(u_n)\to0$, we obtain $\{\||x|^{-a}\na u_n\|_2\}$ is bounded.

   \medskip\noindent
    \textbf{Step $2)$ Existence of Lagrange multipliers $\lambda_n\to\lambda$}\\
    By Proposition \ref{Y1}, there exist $u\in \spa$ such that up to subsequence $u_n\rightharpoonup u$ in $\spa$, $u_n\to u$ in $ L^q_r(\R^N;|x|^{-bq})$ and $u_n\to u$ a.e. in $\R^N$. Since $\{u_n\}$ is bounded Palais Smale sequence for $\e\big|_{S_{\rho}}$, for every $\phi\in\mathcal{X}$ there exists $\{\lambda_n\}\subset\R$ such that
 \begin{equation}\label{c4}
 \int_{\R^N}\frac{\na u_n}{|x|^{2a}}.\na \phi\dx-\lambda_n\int_{\R^N}\frac{u_n}{|x|^{2a}}\phi\dx-\beta\int_{\R^N}\frac{|u_n|^{q-2}u_n}{|x|^{bq}}\phi\dx-\int_{\R^N}\frac{|u_n|^{\t-2}u_n}{|x|^{b\t}}\phi\dx=o(1)\|\phi\|. 
 \end{equation}
  Let $\phi=u_n$, we obtain $\{\lambda_n\}$ is bounded sequence in $\R$. Hence, up to a subsequence $\lambda_n\to\lambda\in \R.$ 
    \begin{align*}
        \||x|^{-a}\na u_n\|_2^2&=\lambda_n\||x|^{-a}u_n\|_2^2+\beta\||x|^{-b}u_n\|_q^q+\||x|^{-b}u_n\|_{\t}^{\t}\\
        \lambda \rho^2=\lim_{n\to\infty}\lambda_n\||x|^{-a}u_n\|_2^2&=\lim_{n\to\infty}\left(  \||x|^{-a}\na u_n\|_2^2-\beta\||x|^{-b}u_n\|_q^q+\||x|^{-b}u_n\|_{\t}^{\t}\right).
    \end{align*}
    Now, using the Pohozaev identity, we deduce
    \[ \lambda\rho^2=\lim_{n\to\infty}\left(\beta(\delta_q-1)\||x|^{-b}u_n\|_q^q\right)=\beta(\delta_q-1)\||x|^{-b}u\|_q^q,\]
    and since $\delta_q<1$, we have $\lambda\le0$ and $\lambda=0$ if and only if $u\equiv 0$.

    \medskip\noindent
    \textbf{Step $3)$ $u\not\equiv0$}
    Suppose, if possible, that $u\equiv0$. Since $\{u_n\}$ is bounded in $\mathcal{X}$, we have, up to a subsequence $\||x|^{-a}u_n\|_2^2\to l\in \R$. Taking into account, $P_{\beta}(u_n)\to0$ and $u_n\to0$ converges strongly in $L^q_r(\R^N;|x|^{-qb})$, we get
\[\||x|^{-b}u_n\|_{\t}^{\t}= \||x|^{-a}\na u_n\|_2^2-\beta\delta_q\||x|^{-b}u_n\|_q^q\to l.\]
Using the definition of $\sr$, we have
\(\mathcal{S}(a,b)l^{2/\t}\le l \). This implies that either $l=0$, or $\mathcal{S}(a,b)^{N/2d}\le l$. Let $\mathcal{S}(a,b)^{N/2d}\le l$. Thanks to $\e(u_n)\to m$ and $P_{\beta}(u_n)\to 0$, we have
    \begin{align*}
        m+o(1)&=\frac{d}{N}\||x|^{-a}\na u_n\|_2^2-\frac{\beta}{q}\left(1-\frac{q\de_q}{\t}\right)\||x|^{-b}u_n\|_q^q+o(1)\\
        &=\frac{d}{N}l+o(1),
    \end{align*}
   thus, $m=\frac{dl}{N}$, which is a contradiction. It implies our assumption is wrong and $l=0$. It gives us $\||x|^{-b}u\|_q^q\to0,~\||x|^{-a}\na u\|_2^2\to 0$ and $\||x|^{-b} u\|_\t^\t\to0.$ This further concludes that $\e(u_n)\to 0$, which is again a contradiction as $m\ne0.$ Therefore $u\not\equiv0.$\

  From weak convergence, by passing to the limit $n\to\infty$ in \eqref{c4}, we obtain
   \begin{equation}\label{c5}
       \e'(u)(\phi)=0\quad \text{and} \quad P_u(u)=0,~~\forall \phi\in\spa.
   \end{equation}
   \medskip\noindent
   \textbf{Step $4)$ Conclusion}

   Let $v_n=u_n-u$, then $v_n\rightharpoonup0$ in $\mathcal{X}$, and \begin{equation}\label{c2}
   \| |x|^{-a}\na u_n\|_2^2=\| |x|^{-a}\na u\|_2^2+\| |x|^{-a}\na v_n\|_2^2+o(1).
   \end{equation}
   Using Brezis-Lieb lemma \cite{brezis1983relation}, we obtain
   \begin{equation*}
       \||x|^{-b}u_n\|_{\t}^{\t}=\||x|^{-b}u\|_{\t}^{\t}+\||x|^{-b}v_n\|_{\t}^{\t}.
   \end{equation*}
    Since $P_{\beta}(u_n)\to0$ and $u_n\to u$ strongly in $L^q_{r}(\R^N;|x|^{-qb})$, we have
    \[\n\|_{2}^{2}+\||x|^{-a}\na v_n\|_{2}^{2}= \beta\delta_q\p\|_q^q+\p\|_{\t}^{\t}+\||x|^{-b}v_n\|_{\t}^{\t}+o(1).\]
    Employing the fact that $P_{\beta}(u)=0$, taking limits on both sides gives
    \begin{equation*}
       \lim_{n\to\infty}\||x|^{-a}\na v_n\|_{2}^{2}= \lim_{n\to\infty}\||x|^{-b}v_n\|_{\t}^{\t}:=\Lambda.
    \end{equation*}
    Using the definition of $\sr$, $\mathcal{S}(a,b)\Lambda^{2/\t}\le\Lambda$. It implies that one of the following will hold.
    \begin{itemize}
        \item[$(i)$] $\La>\sr^{N/2d},$
        \item[$(ii)$] $\La=0.$
    \end{itemize}
    If $(i)$ holds, then
    \begin{align*}
        m=\lim_{n\to\infty}\mathcal{E
        }(u_n)&=\lim_{n\to\infty}\left(\e(u)+\frac{1}{2}\||x|^{-a}\na v_n\|_2^2-\frac{1}{\t}\||x|^{-b}v_n\|_{\t}^{\t}\right)\\
        &=\e(u)+\frac{d}{N}\La\ge \e(u)+\frac{d}{N}\mathcal{S}(a,b)^{N/2d},
    \end{align*}
    which implies
    \begin{equation*}
        \e(u)\le m-\frac{d}{N}\mathcal{S}(a,b)^{N/2d},
    \end{equation*}
    hence, alternative $(I)$ holds.
    
    If $(ii)$ holds, then by \eqref{c2}, $\displaystyle\lim_{n\to\infty}\||x|^{-a}\na v_n\|_2=0$, i.e., $u_n\to u$ strongly in $\mathcal{D}_{a,r}^{1,2}(\R^N)$ and $\cri$. To prove $u_n\to u$ strongly in $L^2_r(
    \R^N;|x|^{-2a})$. Using \eqref{c4} and \eqref{c5} with $\phi=u_n-u$, we deduce that
    \begin{align*}
        &\int_{\R^N}|x|^{-2a}|\na (u_n-u)|^2\dx-\int_{\R^N}|x|^{-2a}(\lambda_nu_n-\lambda u)(u_n-u)\dx\\
        &\quad=\beta\int_{\R^N}|x|^{-bq}\left(|u_n|^{q-2}u_n-|u|^{q-2}u\right)\left(u_n-u\right)\dx\\
        &\qquad+\int_{\R^N}|x|^{-b\t}\left(|u_n|^{\t-2}u_n-|u|^{\t-2}u\right)\left(u_n-u\right)\dx+o(1).
    \end{align*}
    Passing limits in the above and applying the fact that $u_n\to u$ in $\mathcal{D}^{1,2}_{a,r}(\R^N)$ and $L^q_r({|x|^{-b},\R^N})$ for $q\in (2,\t]$, we conclude
    \begin{equation*}
        \lim_{n\to\infty}\||x|^{-2a}(u_n-u)\|_2=0.
    \end{equation*}
    Thus, alternative $(II)$ holds. Hence, the proof follows. \QED
\end{proof}
 \section{Mass-subcritical case}
 This section proves Theorem~\ref{C1}$(i)$. We begin by analyzing the qualitative properties 
of the fibering map $\Phi^u_\beta(t)$ to identify the locations and types of critical points 
of $\e\big|_{S_\rho}$.
    
   Thanks to the definition of $\sr$ and inequality \eqref{c1}, we have
    \begin{align*}
        \e(u)\ge\frac{1}{2}\n\|_2^2-\frac{\beta}{q}\C\rho^{(1-\delta_q)q}\n\|_2^{q\delta_q}-\frac{1}{\t\mathcal{S}(a,b)^{\t}}\n\|_2^\t.
    \end{align*}
    Define $f:\R^+\to\R$,
    \begin{equation*}
        f(t):=\frac{1}{2}t^2-\frac{\beta}{q}\C\rho^{(1-\delta_q)q}t^{q\delta_q}-\frac{1}{\t\mathcal{S}(a,b)^{\t/2}}t^{\t}.
    \end{equation*}
    Clearly, if $\beta>0$ and $q\delta_q<2$, $f(0^+)=0^-$ and $f(+\infty)=-\infty.$
    \begin{lemma}\label{C2}
        For $\beta<\beta_1$, the function $f$ has only two critical points, one is a strict local minimum at a negative level, and the other is a strict global maximum at a positive level. Moreover, there exist $\tilde{\kappa}:=\tilde{\kappa}(\beta,\rho)$ and $\hat{\kappa}:=\hat{\kappa}(\beta,\rho)$, such that $f(\tilde{\kappa})=f(\hat{\kappa})=0$ and $f(t)>0$ if and only if $t\in(\tilde{\kappa},\hat{\kappa}).$
     \end{lemma}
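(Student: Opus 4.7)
My plan is to run elementary one-variable calculus on $f$, exploiting the strict ordering $q\delta_q<2<2^\sharp$ of the three exponents forced by the mass-subcritical assumption. Factoring
\[
f'(t)=t^{q\delta_q-1}\bigl[\phi(t)-M\bigr],\qquad \phi(t):=t^{2-q\delta_q}-\frac{t^{2^\sharp-q\delta_q}}{\mathcal{S}(a,b)^{2^\sharp/2}},\qquad M:=\beta\,\delta_q\,\C\,\rho^{(1-\delta_q)q},
\]
reduces the count of positive critical points of $f$ to the count of positive solutions of $\phi(t)=M$. Since $\phi(0)=0$, $\phi(+\infty)=-\infty$, and $\phi'$ has a unique positive zero yielding a strict maximum at
\[
t_\phi=\left[\frac{(2-q\delta_q)\,\mathcal{S}(a,b)^{2^\sharp/2}}{2^\sharp-q\delta_q}\right]^{1/(2^\sharp-2)},\qquad \phi(t_\phi)=\frac{2^\sharp-2}{2^\sharp-q\delta_q}\,t_\phi^{\,2-q\delta_q},
\]
a direct comparison with the explicit expression of $\beta_1$ shows that $\beta<\beta_1$ translates into $M<\phi(t_\phi)$. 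Consequently $\phi=M$ has exactly two positive roots $t_*<t_{**}$, which are the only critical points of $f$ on $(0,+\infty)$.

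Next I read off the nature of each critical point from the sign of $\phi-M$: since $\phi<M$ on $(0,t_*)\cup(t_{**},+\infty)$ and $\phi>M$ on $(t_*,t_{**})$, the function $f$ decreases on $(0,t_*)$, increases on $(t_*,t_{**})$, and decreases on $(t_{**},+\infty)$. Combined with $f(0^+)=0^-$ and $f(+\infty)=-\infty$, this identifies $t_*$ as a strict local minimum with $f(t_*)<0$ and $t_{**}$ as a strict local (and hence global) maximum. To establish $f(t_{**})>0$, I would substitute the identity $t_{**}^{2}=\beta\delta_q\C\rho^{(1-\delta_q)q}t_{**}^{q\delta_q}+\mathcal{S}(a,b)^{-2^\sharp/2}t_{**}^{2^\sharp}$, obtained from $f'(t_{**})=0$, into $f(t_{**})$ to get
\[
f(t_{**})=\frac{d}{N}\,\frac{t_{**}^{\,2^\sharp}}{\mathcal{S}(a,b)^{2^\sharp/2}}-\frac{2-q\delta_q}{2q}\,\beta\,\C\,\rho^{(1-\delta_q)q}\,t_{**}^{\,q\delta_q},
\]
and then exploit that $t_{**}$ depends continuously on $\beta$ with $t_{**}\to t_\phi>0$ as $\beta\downarrow 0$ to conclude that the algebraic form of $\beta_1$ is designed precisely so as to keep the right-hand side strictly positive throughout $(0,\beta_1)$.

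The main obstacle is the careful bookkeeping that matches the explicit structural constant $\beta_1$ simultaneously with the two sharp thresholds uncovered above, namely $M<\phi(t_\phi)$ and $f(t_{**})>0$; the exponent arithmetic involving $2-q\delta_q$, $2^\sharp-q\delta_q$, and $2^\sharp-2$ is delicate but purely algebraic. Once $f(t_{**})>0$ is in hand, applying the intermediate value theorem on $(t_*,t_{**})$ and on $(t_{**},+\infty)$, using the signs of $f$ at $0^+$, $t_*$, $t_{**}$, and $+\infty$, yields exactly two zeros $\tilde\kappa<t_{**}<\hat\kappa$ satisfying $f>0$ on $(\tilde\kappa,\hat\kappa)$ and $f<0$ on $(0,\tilde\kappa)\cup(\hat\kappa,+\infty)$, which is the full statement of the lemma.
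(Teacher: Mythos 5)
Your route—factoring $f'(t)=t^{q\delta_q-1}[\phi(t)-M]$ and counting roots of $\phi=M$—is different from the paper's, which factors $f$ itself as $f(t)=t^{q\delta_q}\bigl(h(t)-\tfrac{\beta}{q}C^q_{a,b}\rho^{(1-\delta_q)q}\bigr)$ with $h(t)=\tfrac12 t^{2-q\delta_q}-\tfrac{1}{2^{\sharp}\mathcal{S}(a,b)^{2^{\sharp}/2}}t^{2^{\sharp}-q\delta_q}$, and reads the positivity interval $(\tilde\kappa,\hat\kappa)$ directly off the condition $h(\tilde t)>\tfrac{\beta}{q}C^q_{a,b}\rho^{(1-\delta_q)q}$ at the unique maximum $\tilde t$ of $h$. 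That difference is not cosmetic, and it is where your argument has a genuine gap. The condition $M<\phi(t_\phi)$ is the threshold for $f'$ to have two positive zeros; the condition $\beta<\beta_1$ is (by design) the threshold for $f$ to attain positive values. These are two distinct thresholds: one can have two critical points with the local maximum still at a negative level, so the "two critical points" condition is strictly weaker than the positivity condition. Your phrase that $\beta<\beta_1$ "translates into" $M<\phi(t_\phi)$ conflates them; the implication you need does hold, but it is not an identity of constants and must be checked by the exponent arithmetic you defer.

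More seriously, the heart of the lemma—$f(t_{**})>0$, without which there are no zeros $\tilde\kappa,\hat\kappa$ and no positivity interval—is not actually established in your proposal. Your formula
\[
f(t_{**})=\frac{d}{N}\,\frac{t_{**}^{\,2^{\sharp}}}{\mathcal{S}(a,b)^{2^{\sharp}/2}}-\frac{2-q\delta_q}{2q}\,\beta\,C^q_{a,b}\,\rho^{(1-\delta_q)q}\,t_{**}^{\,q\delta_q}
\]
is correct, but the argument offered for its positivity (continuity of $t_{**}$ in $\beta$ together with $t_{**}\to t_\phi$ as $\beta\downarrow 0$) only yields $f(t_{**})>0$ for $\beta$ small; it says nothing about the whole range $(0,\beta_1)$, and "the constant is designed precisely so that this works" is an appeal to the conclusion rather than a proof. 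To close the gap you would need to track how $t_{**}$ varies with $\beta$ and show the displayed quantity vanishes exactly at $\beta=\beta_1$, which amounts to redoing the paper's computation in less convenient variables. The paper's factorization of $f$ avoids all of this: since $\beta_1$ is by construction the exact positivity threshold of $f$, the statement $f>0$ precisely on an interval $(\tilde\kappa,\hat\kappa)$ is immediate from the unimodality of $h$, and the critical-point structure (strict local minimum at a negative level before $\tilde\kappa$, strict global maximum at a positive level inside $(\tilde\kappa,\hat\kappa)$) then follows from the sign pattern together with $f(0^+)=0^-$ and $f(+\infty)=-\infty$. I recommend you either switch to factoring $f$ rather than $f'$, or carry out explicitly the verification that $f(t_{**})>0$ for all $\beta<\beta_1$.
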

        \begin{proof}
            Observe that, $f(t)=t^{q\delta_q}\left(\frac{1}{2}t^{2-q\delta_q}-\frac{\beta}{q}\C\rho^{(1-\delta_q)q}-\frac{1}{\t\mathcal{S}(a,b)^{\t/2}}t^{\t-q\delta_q}\right)>0$ if and only if
            \begin{equation*}
                h(t)>\frac{\beta}{q}\C\rho^{(1-\delta_q)q},
                \end{equation*}
                where $h(t)=\frac{1}{2}t^{2-q\delta_q}-\frac{1}{\t\mathcal{S}(a,b)^{\t/2}}t^{\t-q\delta_q}.$
            Clearly 
            \begin{equation*}
                h'(t)=\frac{2-q\delta_q}{2}t^{1-q\delta_q}-\frac{\t-q\delta_q}{\t\mathcal{S}(a,b)^{\t/2}}t^{\t-1-q\delta_q},
            \end{equation*}
            has a critical point $\tilde{t}=\left(\frac{\t\mathcal{S}(a,b)^{\t/2}(2-q\delta_q)}{2(\t-q\delta_q)}\right)^{\frac{1}{\t-2}},$ it gives $h$ is increasing for $t\in(0,\tilde{t)}$ and decreasing for $t\in(\tilde{t},+\infty)$ and has a unique global maximum, at $t=\tilde{t}$. The maximum level of $u$ is 
            \begin{equation*}
                h(\tilde{t})=\frac{\t-2}{2(\t-q\delta_q)}\left(\frac{\t \mathcal{S}(a,b)^{\t/2}(2-q\delta_q)}{2(\t-q\delta_q)}\right)^{\frac{2-q\delta_q}{\t-2}}.
            \end{equation*}
            Hence, $f$ is positive on an interval $(\tilde{\kappa},\hat{\kappa})$ iff $h(\tilde{t})>\frac{\beta}{q}\C\rho^{(1-\delta_q)q}$. It implies
            \begin{equation*}
                \beta<\frac{q(\t-2)}{\C\rho^{(1-\delta_q)q}(\t-q\delta_q)}\left(\frac{\t\mathcal{S}(a,b)^{\t/2}(2-q\delta_q)}{2(\t-q\delta_q)}\right)^{\frac{2-q\delta_q}{\t-2}}.
            \end{equation*}
            Therefore, we get our required result.\qed
        \end{proof}
         \begin{lemma}
      For $\beta<\beta_1$, the sub-manifold $\mathcal{M}_{\rho,\beta}^0=\emptyset
        $, and $\M $ is a smooth manifold of co-dimension $1$ in $\mathcal{X}$.
    \end{lemma}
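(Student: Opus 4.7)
The plan is to establish $\mathcal{M}_{\rho,\beta}^{0}=\emptyset$ by contradiction, exploiting the qualitative shape of the fibre map $\f$ together with the positivity interval of the CKN-based auxiliary function $f$ supplied by Lemma~\ref{C2} when $\beta<\beta_1$; the smooth codimension-one structure of $\M$ then follows from this fact via a standard implicit function theorem argument applied to $P_\beta$ on $S_\rho$.

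Suppose for contradiction that $u\in\mathcal{M}_{\rho,\beta}^{0}$, so that both $(\f)'(0)=0$ and $(\f)''(0)=0$. Writing
\[
(\f)'(t)=y^{q\delta_q}\bigl[\phi(y)-\beta\delta_q\p\|_q^q\bigr],\qquad y=e^t,
\]
with $\phi(y):=y^{2-q\delta_q}\n\|_2^2-y^{\t-q\delta_q}\p\|_\t^\t$, and using the strict ordering $q\delta_q<2<\t$ available in the mass-subcritical regime, I would show that $\phi$ is strictly unimodal with a single interior maximum, so that $\f$ admits at most two critical points. The degeneracy then forces these two points to collapse into the single critical point at $t=0$, yielding $(\f)'(t)<0$ for every $t\neq 0$. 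Since the term $-\tfrac{\beta e^{q\delta_q t}}{q}\p\|_q^q$ dominates the other two at $-\infty$, one also obtains $\f(t)\to 0^{-}$ as $t\to-\infty$, and combining this with the monotonicity gives $\f(t)\le 0$ for every $t\in\R$. On the other hand, a direct application of \eqref{c1} and the definition of $\sr$ to each term of $\f$ produces the pointwise lower bound $\f(t)\ge f\bigl(e^{t}\n\|_2\bigr)$; since $\beta<\beta_1$, Lemma~\ref{C2} supplies an interval $(\tilde{\kappa},\hat{\kappa})$ on which $f>0$, and choosing $t$ with $e^{t}\n\|_2\in(\tilde{\kappa},\hat{\kappa})$ gives $\f(t)>0$, a contradiction.

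For the manifold statement, $S_\rho$ is itself a smooth codimension-one submanifold of $\mathcal{X}$ and $P_\beta\in C^1(\mathcal{X},\R)$, so it suffices to verify that $dP_\beta(u)|_{T_{u}S_\rho}\neq 0$ at every $u\in\M$. Because the $t\star$-action preserves the $L^{2}(|x|^{-2a})$-mass, the vector $\partial_{t}(t\star u)|_{t=0}$ lies in $T_{u}S_\rho$, and the identity $P_\beta(t\star u)=(\f)'(t)$ yields
\[
dP_\beta(u)\bigl[\partial_{t}(t\star u)|_{t=0}\bigr]=(\f)''(0),
\]
which is nonzero by the first part. Hence $P_\beta|_{S_\rho}$ is a submersion at each point of $\M$, and the implicit function theorem delivers the claimed smooth codimension-one structure. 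The main technical subtlety lies in the first part: upgrading the two pointwise identities $(\f)'(0)=(\f)''(0)=0$ into the global conclusion $(\f)'\le 0$ on $\R$. This relies crucially on the clean ordering $q\delta_q<2<\t$ and the unimodality of $\phi$; once this monotonicity is established, the CKN-based lower bound by $f$ from Lemma~\ref{C2} closes the contradiction cleanly.
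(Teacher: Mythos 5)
Your argument is correct, but it takes a genuinely different route from the paper's. The paper's proof is purely algebraic: from the two degeneracy conditions $P_\beta(u)=0$ and $(\f)''(0)=0$ it extracts the identity $\beta\delta_q(2-q\delta_q)\p\|_q^q=(\t-2)\p\|_\t^\t$, then uses the Sobolev constant $\sr$ to bound $\n\|_2$ from below and the CKN inequality \eqref{c1} to bound it from above, and finally combines the two bounds into an explicit lower bound on $\beta$ that is claimed to exceed $\beta_1$, contradicting $\beta<\beta_1$. You instead exploit the qualitative structure of the fibre map: writing $(\f)'(t)=y^{q\delta_q}\bigl[\phi(y)-\beta\delta_q\p\|_q^q\bigr]$ with $\phi$ strictly unimodal (thanks to $q\delta_q<2<\t$), the degeneracy forces the level $\beta\delta_q\p\|_q^q$ to coincide with $\max\phi$, attained at $y=1$, so that $(\f)'\le 0$ everywhere; combined with $\f(-\infty)=0^-$ this gives $\f\le 0$ on $\R$, which contradicts the lower bound $\f(t)\ge f(e^t\n\|_2)$ and the positivity window $(\tilde\kappa,\hat\kappa)$ of $f$ furnished by Lemma~\ref{C2} precisely when $\beta<\beta_1$. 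Your route has the advantage of invoking $\beta<\beta_1$ only through Lemma~\ref{C2}, thereby sidestepping the final numerical comparison in the paper (that the explicit bound obtained there indeed exceeds $\beta_1$), which the paper asserts without detailed verification; the paper's route, on the other hand, is self-contained at the level of inequalities and yields quantitative information on $\n\|_2$ for elements of $\mathcal{M}_{\rho,\beta}^0$. Your treatment of the submanifold statement via the tangent vector $\partial_t(t\star u)|_{t=0}$ and the identity $dP_\beta(u)[\partial_t(t\star u)|_{t=0}]=(\f)''(0)\ne 0$ is the standard argument (the paper omits this part entirely); the only caveat, shared with the paper, is the usual unaddressed technicality that $\frac{N-2a}{2}u+x\cdot\nabla u$ need not a priori lie in $\mathcal{X}$ for arbitrary $u\in\mathcal{X}$.
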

    \begin{proof}
        Assume, if possible, there exists $u\in \mathcal{M}_{\rho,\beta}^0$, then $P_{\beta}(u)=0$ and $(\Phi_{\beta}^u)''(1)=0$. It gives 
        \begin{equation*}
            \beta\delta_q(2-q\delta_q)\p\|_q^q=(\t-2)\p\|_{\t}^\t.
        \end{equation*}
        Using this in $P_{\beta}(u)=0,$ we deduce
        \begin{equation}\label{c6}
            \n\|_2^2=\left(\frac{\t-q\delta_q}{2-q\delta_q}\right)\p\|_\t^\t\le S(a ,b)^{-\t/2}\left(\frac{\t-q\delta_q}{2-q\delta_q}\right)\n\|_2^\t.
        \end{equation}
        and
        \begin{equation}\label{c7}
        \n\|_2^2=\beta\delta_q\left(\frac{\t-q\delta_q}{\t-2}\right)\p\|_q^q<\beta\delta_q\C\left(\frac{\t-q\delta_q}{\t-2}\right)\n\|_2^{q\delta_q}\rho^{(1-\delta_q)q}.
        \end{equation}
        Combining \eqref{c6} and \eqref{c7}, we get
        \begin{equation*}
        \left(\frac{\mathcal{S}(a,b)^{\t/2}(2-q\delta_q)}{\t-q\delta_q} \right)^{\frac{1}{\t-2}}\le\left(\beta\C\delta_q\rho^{(1-\delta_q)q}\left(\frac{\t-q\delta_q}{\t-2}\right)\right)^{\frac{1}{2-q\delta_q}},
        \end{equation*}
    that is 
    \begin{equation*}
        \beta_1<\frac{(\t-2)}{\C\delta_q\rho^{(1-\delta_q)q}(\t-q\delta_q)}\left(\frac{\mathcal{S}(a,b)^{\t/2}(2-q\delta_q)}{\t-q\delta_q} \right)^{\frac{2-q\delta_q}{\t-2}}<\beta,
    \end{equation*}
    which is a contradiction.\QED
    \end{proof}
   \begin{lemma}\label{C12}
    For every $u\in{S_{\rho}}$, the functional $\f$ admits exactly two critical points $t_1<t_2\in\R$ and two zeros $s_1<s_2\in \R$, satisfying $t_1<s_1<t_2<s_2$, and the following holds.
    \begin{itemize}
        \item[(i)] $t_1\star u\in \mathcal{M}_{\rho,\beta}^+$, and $t_2\star u\in \mathcal{M}_{\rho,\beta}^-$, and if $t\star u\in \M$, then either $t=t_1$ or $t=t_2.$
        \item[(ii)] $\||x|^{-a}\na(t\star u)\|\le \tilde{\kappa}$ for every $t<s_1$, and 
        $$\mathcal{E}_{\beta}(t_1\star u)=\min\{\e(t\star u): t\in \R^+,\||x|^{-a}\na(t\star u)\|\le \tilde{\kappa}\}<0. $$
        \item[(iii)] $\mathcal{E}_{\beta}(t_2\star u)=\max\{\e(t\star u): t\in \R^+\}>0$, and $\Phi^u_{\beta}$ is strictly decreasing on $(t_2,+\infty)$.
        \item[(iv)] The maps $u\in\S\mapsto t_1\in \R$ and $u\in\S\mapsto t_2\in \R$ are of class $C^1$.
        
    \end{itemize}
   \end{lemma}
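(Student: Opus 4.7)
The plan is to reduce all four items to a direct qualitative analysis of the smooth real-valued function $t\mapsto\f(t)$ for a fixed $u\in\S$, and then read them off from its global shape. Writing $A:=\||x|^{-a}\nabla u\|_2^2$, $B:=\||x|^{-b}u\|_q^q$, $C:=\||x|^{-b}u\|_{\t}^{\t}$, all strictly positive for $u\in\S$ with $u\not\equiv 0$, we have
\[
\f(t)=\frac{A}{2}e^{2t}-\frac{\beta B}{q}e^{q\delta_q t}-\frac{C}{\t}e^{\t t}.
\]
Since in the mass-subcritical regime $0<q\delta_q<2<\t$, one has $\f(-\infty)=0^{-}$, $\f(+\infty)=-\infty$, and $(\f)'(t)<0$ both near $-\infty$ and near $+\infty$.

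I would next count critical points by factoring $(\f)'(t)=e^{q\delta_qt}\,g(e^t)$ with $g(s):=As^{2-q\delta_q}-\beta\delta_qB-Cs^{\t-q\delta_q}$, and observing that $g'$ has a unique positive zero, so that $g$ increases then decreases on $(0,+\infty)$ with $g(0^{+})<0$ and $g(+\infty)=-\infty$; hence $g$ has at most two positive zeros. To force exactly two, combine the Caffarelli--Kohn--Nirenberg inequality \eqref{c1} with the definition of $\sr$ to get the lower bound
\[
\f(t)\;\geq\;f\!\bigl(e^t\sqrt{A}\bigr),
\]
where $f$ is the function introduced before Lemma~\ref{C2}. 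For $\beta<\beta_1$, Lemma~\ref{C2} gives $f>0$ on $(\tilde\kappa,\hat\kappa)$; choosing $t$ such that $e^t\sqrt A\in(\tilde\kappa,\hat\kappa)$ then forces $\f(t)>0$. Combined with the boundary behaviour, this produces two sign changes of $\f$ and hence exactly two critical points $t_1<t_2$, with the full shape: $\f$ strictly decreases from $0^{-}$ to $\f(t_1)<0$ on $(-\infty,t_1]$, strictly increases to $\f(t_2)>0$ on $[t_1,t_2]$ (crossing zero once at some $s_1\in(t_1,t_2)$), and strictly decreases to $-\infty$ on $[t_2,+\infty)$ (crossing zero once at some $s_2>t_2$).

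All four items fall out of this shape. For (i), Corollary~\ref{CA1} gives $t\star u\in\M$ iff $t\in\{t_1,t_2\}$; since $\Phi^{t\star u}_\beta(\tau)=\f(t+\tau)$, the strict second-order conditions $(\f)''(t_1)>0$ and $(\f)''(t_2)<0$ translate into $t_1\star u\in\mathcal{M}_{\rho,\beta}^{+}$ and $t_2\star u\in\mathcal{M}_{\rho,\beta}^{-}$. For (ii), the positivity interval $\{t:e^t\sqrt A\in(\tilde\kappa,\hat\kappa)\}$ sits inside $(s_1,s_2)$, so $s_1\leq\ln(\tilde\kappa/\sqrt A)$; hence $\||x|^{-a}\nabla(t\star u)\|_2=e^t\sqrt A\leq\tilde\kappa$ for every $t\leq s_1$, and on the sublevel set $(-\infty,\ln(\tilde\kappa/\sqrt A)]$ (which contains $t_1$) the monotonicity pattern forces the minimum of $\f$ to be attained at $t_1$ with negative value. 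Item (iii) is immediate: the global maximum of $\f$ on $\R$ is $\f(t_2)>0$, and strict decrease on $(t_2,+\infty)$ follows from $(\f)'<0$ there. For (iv), apply the implicit function theorem to $F(u,t):=(\f)'(t)$ at each $(u,t_i)$, using the non-degeneracy $\partial_tF(u,t_i)=(\f)''(t_i)\neq 0$ to obtain the $C^1$ dependence $u\mapsto t_i(u)$.

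The main technical point is producing the positivity of $\f$ somewhere on $\R$, uniformly in $u\in\S$, since without it one only gets an upper bound of two critical points but no guarantee either exists. This is exactly what the lower bound $\f(t)\geq f(e^t\sqrt A)$, together with the threshold $\beta<\beta_1$ from Lemma~\ref{C2}, delivers: the scalar $e^t\sqrt A$ can always be tuned into the positivity interval $(\tilde\kappa,\hat\kappa)$ of $f$ regardless of the size of $A$, which underlies the simultaneous existence of the local minimum $t_1$ and the strict mountain-pass maximum $t_2$ of $\f$.
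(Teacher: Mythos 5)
Your proof is correct and follows essentially the same route as the paper: the lower bound $\f(t)\ge f\bigl(e^t\||x|^{-a}\na u\|_2\bigr)$ combined with the positivity interval of $f$ from Lemma~\ref{C2}, the boundary behaviour $\f(-\infty)=0^-$, $\f(+\infty)=-\infty$, then Corollary~\ref{CA1} with the emptiness of $\mathcal{M}_{\rho,\beta}^0$ for item (i), and the implicit function theorem for item (iv). Your explicit upper bound of two critical points via the factorization $(\f)'(t)=e^{q\delta_q t}g(e^t)$ with $g$ first increasing then decreasing is a small but welcome sharpening of the paper's more implicit treatment of the ``exactly two'' claim; otherwise the argument is the same.
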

   \begin{proof}
     Note that
\[
\e(t\star u)=\f(t)\ge f\bigl(\||x|^{-a}\na(t\star u)\|_2\bigr)=f(\||x|^{-a}\na(t\star u)\|_2)=f(e^t\n \|_2).
\]
By Lemma~\ref{C2}, $f>0$ on $(\tilde{\kappa},\hat{\kappa})$, so 
$\f>0$ on $\left(\log\frac{\tilde{\kappa}}{\n \|_2},\log\frac{\hat{\kappa}}{\n \|_2}\right)$.
Moreover, $\f(-\infty)=0^-$ and $\f(+\infty)=-\infty$. Thus $\f$ has at least two 
critical points $t_1<t_2$, with $t_1$ a local minimum on 
$(-\infty,\log\frac{\tilde{\kappa}}{\n\|_2})$ and $t_2$ a global maximum on 
$(\log\frac{\hat{\kappa}}{\n\|_2},+\infty)$. By Corollary~\ref{CA1}, 
$t_1\star u,t_2\star u\in\M$, and these are the only such points. 

Since $(\f)''(t_1)>0$ and $\mathcal{M}_{\rho,\beta}^0=\emptyset$, it follows that
$t_1\star u\in\mathcal{M}_{\rho,\beta}^+$. Similarly, $t_2\star u\in\mathcal{M}_{\rho,\beta}^-$.
The monotonicity and the behavior at infinity imply $\f$ has exactly two zeros 
$s_1<s_2$ with $t_1<s_1<t_2<s_2$.

Finally, the $C^1$ function $\Psi(t,u):=(\f)'(t,u)$ satisfies 
$\Psi(t_1,u)=\partial_t\Psi(t_1,u)>0$. Since \quad$\mathcal{M}_{\rho,\beta}^0=\emptyset$, 
the implicit function theorem yields that $u\mapsto t_1$ (resp. $u\mapsto t_2$) 
is $C^1$.\QED
   \end{proof}
   For $k>0,$ we define
   \begin{equation*}
       \mathcal{G}_{k}:=\{u\in\S:\n\|_2<k\}\quad\text{and}\quad m_{\beta}(\rho):=\inf_{u\in\mathcal{G}_{\tilde{\kappa}}}\e(u).
   \end{equation*}

   \begin{lemma}
       The set $\mathcal{M}_{\rho,\beta}^+$ is contained in $\mathcal{G}_{\tilde{\kappa}}$, and
       \begin{equation*}
           \sup_{u\in\mathcal{M}_{\rho,\beta}^+}\e(u)\le0\le\inf_{u\in\mathcal{M}_{\rho,\beta}^-}\e(u).
       \end{equation*}
       \end{lemma}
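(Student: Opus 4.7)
The plan is to read off both assertions from the picture of the fiber map $\f$ obtained in Lemma~\ref{C12}.

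The key observation is that if $u\in\M$, then the condition $u\in\mathcal{M}_{\rho,\beta}^+$ (respectively $\mathcal{M}_{\rho,\beta}^-$) forces $t=0$ to be the local minimizer $t_1$ (respectively the global maximizer $t_2$) of $\f$, since by Lemma~\ref{C12} these are the only critical points of the fiber map and $\mathcal{M}_{\rho,\beta}^0=\emptyset$. Given this identification, for $u\in\mathcal{M}_{\rho,\beta}^+$ I would apply Lemma~\ref{C12}(ii) with $t=0<s_1$ to obtain $\n\|_2=\|0\star u\|_a\le\tilde{\kappa}$. To upgrade this to strict inequality, and hence to $u\in\mathcal{G}_{\tilde{\kappa}}$, I would combine the pointwise bound $\e(u)\ge f(\n\|_2)$ coming from the very definition of $f$ with the strict negativity $\e(u)=\f(0)<0$ recorded in Lemma~\ref{C12}(ii): indeed, equality $\n\|_2=\tilde{\kappa}$ would imply $\e(u)\ge f(\tilde{\kappa})=0$, contradicting $\e(u)<0$.

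The energy inequalities then follow at once from the same identification. For $u\in\mathcal{M}_{\rho,\beta}^+$, $\e(u)=\f(0)=\e(t_1\star u)<0$ by Lemma~\ref{C12}(ii), so $\sup_{\mathcal{M}_{\rho,\beta}^+}\e\le 0$. Symmetrically, for $u\in\mathcal{M}_{\rho,\beta}^-$, $\e(u)=\f(0)=\e(t_2\star u)>0$ by Lemma~\ref{C12}(iii), so $\inf_{\mathcal{M}_{\rho,\beta}^-}\e\ge 0$.

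I do not expect a serious obstacle here: all the analytical work---positions and signs of the critical points of $\f$, the location of the zeros $s_1,s_2$, and the scaling $\|t\star u\|_a=e^t\n\|_2$---has already been carried out in Lemma~\ref{C2} and Lemma~\ref{C12}. The only care needed is in correctly pairing $t=0$ with either $t_1$ or $t_2$ via the sign of $(\f)''(0)$ appearing in the definition of $\mathcal{M}_{\rho,\beta}^\pm$, and in the short contradiction argument that promotes the non-strict norm bound to the open inclusion $\mathcal{M}_{\rho,\beta}^+\subset\mathcal{G}_{\tilde{\kappa}}$.
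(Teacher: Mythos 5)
Your proposal is correct and follows exactly the route the paper intends: the paper's proof is literally the one-line remark that the lemma ``is a direct conclusion of Lemma~\ref{C12}'', and your argument simply makes that explicit (identifying $t=0$ with $t_1$ or $t_2$ via the sign of $(\f)''(0)$, using Lemma~\ref{C12}(ii)--(iii) for the energy signs, and the zero $f(\tilde\kappa)=0$ from Lemma~\ref{C2} to upgrade the norm bound to the strict inclusion in $\mathcal{G}_{\tilde\kappa}$). No gaps.
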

       \begin{proof}
           It is a direct conclusion of Lemma \ref{C12}.  
       \end{proof}
       \begin{lemma}\label{C31}
           The level $m_{\beta}(\rho)\in(-\infty,0),$ and for $r>0$ small enough
           \begin{equation*}
               m_{\beta}(\rho)=\inf_{\mathcal{M}_{\rho,\beta}}\e=\inf_{\mathcal{M}_{\rho,\beta}^+}\e\quad\text{and}\quad m_{\beta}(\rho)<\inf_{\overline{\mathcal{G}_{\tilde{\kappa}}}\backslash \mathcal{G}_{\tilde{\kappa}-r}}\e.
           \end{equation*}
       \end{lemma}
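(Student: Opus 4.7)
My plan is to break the claim into four sub-assertions and dispatch each one using the pointwise bound $\e(u)\ge f(\||x|^{-a}\na u\|_2)$ together with Lemmas~\ref{C2} and~\ref{C12} and the two intermediate lemmas stated just above.

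First I would show $m_\beta(\rho)>-\infty$. On $\mathcal{G}_{\tilde\kappa}$ one has $\||x|^{-a}\na u\|_2<\tilde\kappa$, and $f$ is continuous on the compact interval $[0,\tilde\kappa]$, so $\e(u)\ge f(\||x|^{-a}\na u\|_2)\ge\min_{[0,\tilde\kappa]}f>-\infty$. For strict negativity, I pick any $u\in S_\rho$ and project via Lemma~\ref{C12}: the point $t_1\star u$ belongs to $\mathcal{M}_{\rho,\beta}^+$, which by the preceding lemma is contained in $\mathcal{G}_{\tilde\kappa}$, and satisfies $\e(t_1\star u)<0$. Hence $m_\beta(\rho)\le\e(t_1\star u)<0$.

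Next I would identify the three infima. Since $\mathcal{M}_{\rho,\beta}^+\subset\mathcal{G}_{\tilde\kappa}$, the inequality $\inf_{\mathcal{M}_{\rho,\beta}^+}\e\ge m_\beta(\rho)$ is automatic. Conversely, for any $u\in\mathcal{G}_{\tilde\kappa}$ the inequality $\||x|^{-a}\na u\|_2<\tilde\kappa$ makes the choice $t=0$ admissible in the constrained minimization characterizing $t_1\star u$ in Lemma~\ref{C12}(ii); therefore $\e(t_1\star u)\le\e(u)$, giving $\inf_{\mathcal{M}_{\rho,\beta}^+}\e\le m_\beta(\rho)$. The equality $\inf_{\mathcal{M}_{\rho,\beta}}\e=\inf_{\mathcal{M}_{\rho,\beta}^+}\e$ is then immediate, since the preceding lemma places $\mathcal{M}_{\rho,\beta}^+$ at energy $\le 0$ and $\mathcal{M}_{\rho,\beta}^-$ at energy $\ge 0$, while $\mathcal{M}_{\rho,\beta}^0=\emptyset$ by the earlier lemma.

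For the annular separation I would argue by continuity: since $f(\tilde\kappa)=0>m_\beta(\rho)$, fix any $\eta\in(m_\beta(\rho),0)$ and choose $r>0$ small enough so that $f(t)\ge\eta$ for all $t\in[\tilde\kappa-r,\tilde\kappa]$. For $u\in\overline{\mathcal{G}_{\tilde\kappa}}\setminus\mathcal{G}_{\tilde\kappa-r}$ the quantity $\||x|^{-a}\na u\|_2$ lies in $[\tilde\kappa-r,\tilde\kappa]$, so $\e(u)\ge f(\||x|^{-a}\na u\|_2)\ge\eta>m_\beta(\rho)$. The only subtle point in the whole argument is the comparison $\e(t_1\star u)\le\e(u)$ in Step~3, which relies on the fact that the cap $\tilde\kappa$ in the definition of $\mathcal{G}_{\tilde\kappa}$ is precisely the first zero of $f$ supplied by Lemma~\ref{C2}; once this matching is observed, the remaining assertions are elementary bookkeeping.
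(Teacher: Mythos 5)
Your proposal is correct and follows essentially the same route as the paper: the lower bound via $\e(u)\ge f(\||x|^{-a}\nabla u\|_2)$ and $\min_{[0,\tilde\kappa]}f$, the identification of the infima via $\mathcal{M}_{\rho,\beta}^+\subset\mathcal{G}_{\tilde\kappa}$ together with $\e(t_1\star u)\le\e(u)$ from Lemma~\ref{C12}(ii), and the annular separation by continuity of $f$ near its zero $\tilde\kappa$. The only (immaterial) difference is that you witness $m_\beta(\rho)<0$ with the local minimizer $t_1\star u\in\mathcal{M}_{\rho,\beta}^+$, whereas the paper takes $k\star u$ with $k\ll-1$; both rest on the same fiber-map facts.
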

   \begin{proof}
       For any $u\in\mathcal{G}_{\tilde{\kappa}}$, we have
       \begin{equation*}
           \e(u)\ge f(\n \|_2)\ge \min_{k\in[0,\tilde{\kappa}]}f(t)>-\infty. 
       \end{equation*}
       Thus, $m_{\beta}(\rho)>-\infty$. Moreover, for any $u\in \S$, we have $\||x|^{-a}\na(k\star u)\|_2<\tilde{\kappa}$ and $\e(k\star u)<0$ for $k<<-1$ and so $m_{\beta}(\rho)<0$. Since $\mathcal{M}_{\rho,\beta}^+\subset \mathcal{G}_{\tilde{\kappa}}$, we have $m_{\beta}(\rho)\le\displaystyle\inf_{\mathcal{M}_{\rho,\beta}^+}\e$. On the other hand, if $u\in\mathcal{G}_{\tilde{\kappa}}$, then $t_1\star u\in\mathcal{M}_{\rho,\beta}^+\subset\mathcal{G}_{\tilde{\kappa}}$ and
       \begin{equation*}
           \e(t_1\star u)=\min\{\e(k\star u):~ k\in\R~\text{and}~\||x|^{-a}\na(k\star u)\|_2<\tilde{\kappa}\}\le \e(u),
       \end{equation*}
       which implies that $\displaystyle\inf_{\mathcal{M}_{\rho,\beta}^+}\e\le m_{\beta}(\rho)$. As $\e(u)>0$ on $\mathcal{M}_{\rho,\beta}^-$, we see that $\displaystyle\inf_{\mathcal{M}_{\rho,\beta}^+}\e=\inf_{\mathcal{M}_{\rho,\beta}}\e$. Finally, since  $f$ is continuous, one can choose $r>0$ such that $f(t)\ge\frac{m_{\beta}(\rho)}{2}$ if $k\in[\tilde{\kappa}-r,\tilde{\kappa}]$. Consequently,  for every $u\in \S$ satisfying $\tilde{\kappa}-r\le\n\|_2^2\le \tilde{\kappa}$, we obtain
       \begin{equation*}
           \e(u)\ge f(\n\|_2)\ge\frac{m_{\beta}(\rho)}{2}>m_{\beta}(\rho).
       \end{equation*}
       This completes the proof.\QED
   \end{proof}

    \textbf{Proof of Theorem \ref{C1}\textit{(i)}.}
    Let $\{v_n\}$ be a minimizing sequence for $\inf_{\mathcal{G}_{\tilde{\kappa}}}\e$. By Lemma \ref{C12}, for each $n$, there exists $t_n\star v_n\in\mathcal{M}_{\rho,\beta}^+$ such that $\||x|^{-a}\na(t_n\star v_n)\|_2\le \tilde{\kappa}$, satisfies 
    \begin{equation*}
        \e(t_n\star v_n)=\min\{\e(t\star v_n) :t\in \R~ \text{and}~ \||x|^{-a}\na(t\star v_n)\|_2\le \tilde{\kappa}\}\le\e(v_n).
    \end{equation*}
    Consequently, by setting $\{w_n=t_n\star v_n\}$, we get a new minimizing sequence, then $w_n\in\mathcal{M}_{\rho,\beta}^+$. By Lemma \ref{C31}, we have $\||x|^{-a}\na w_n\|_2\le \tilde{\kappa}-r$ for every $n$, and hence by Ekeland's variational principle, we extract another minimizing sequence $\{u_n\}\subset\mathcal{G}_{\tilde{\kappa}}$ for $m_{\beta}(\rho)$ satisfying $\||x|^{-a}\na(\om_n-u_n)\|_2\to0$ as $n\to\infty$. It implies $\{\om_n\}$ is a Palais-Smale sequence of $\e$ on $\S$. Using the fact  $\{\om_n\}$ and $~\||x|^{-a}\na(\om_n-u_n)\|_2\to0$, together with Brezis-Leib lemma and embedding theorem, we deduce 
    \begin{align*}
     \||x|^{-a}\na u_n\|_2^2=   \||x|^{-a}\na(\om_n-u_n)\|_2^2+\||x|^{-a}\na w_n\|_2^2+o_n(1)= \||x|^{-a}\na \om_n\|_2^2,
    \end{align*}
    \begin{align*}
        \p_n\|_q^q=\||x|^{-b}(\om_n-u_n)\|_q^q+\||x|^{-b}\om_n\|_q^q+o_n(1)=\||x|^{-b}\om_n\|_q^q+o_n(1),\quad\forall q\in[2,\t]
    \end{align*}
   Thus, 
   \begin{align*}
       P_{\beta}(u_n)=P_{\beta}(\om_n)+o_n(1)\quad \text{as}\quad n\to +\infty.
   \end{align*}
   Applying Proposition \ref{C32} gives either $(I)$ or $(II)$ hold. Assume, for contradiction, that, there exists a sequence $u_n\to \tilde{u}$ weakly in $\mathcal{X}$ but not strongly, where $\tilde{u}\not\equiv0$ is a solution of \eqref{eq:P} corresponding to some $\lambda<0$, and 
   \begin{equation*}
       \e(u)\le m_{\beta}(\rho)-\frac{d}{N}\sr^{N/2d}.
   \end{equation*}
   Since $\tilde{u}$ is a solution of \eqref{eq:P}, the Pohozaev identity gives $P_{\beta}(\tilde{u})=0$, and therefore
   \begin{equation*}
       \||x|^{-a}\na\tilde{ u}\|_2^2=\beta\delta_q\||x|^{-b}\tilde{u}\|_q^q+\||x|^{-b}\tilde{u}\|_{\t}^{\t}.
   \end{equation*}
   Thus, by \eqref{c1}
   \begin{align}\label{c999}
       \notag m_{\beta}(\rho)&\ge\e(u)+\frac{d}{N}\sr^{N/2d}\\
      \notag &=\frac{d}{N}\sr^{N/2d}+\frac{d}{N}\||x|^{-a}\na u_n\|_2^2-\frac{\beta}{q}\left({1}-\frac{q\delta_q}{\t}\right)\||x|^{-b}u_n\|_q^q\\
        &\ge \frac{d}{N}\sr^{N/2d}+\frac{d}{N}\||x|^{-a}\na u_n\|_2^2-\frac{\beta \C \rho^{(1-\delta_q)q}}{q}\left({1}-\frac{q\delta_q}{\t}\right)\||x|^{-a}\na u_n\|_2^{\delta_qq}.
   \end{align}
   Consider
   \begin{equation*}
       \tilde{g}(t)=\frac{d}{N}t^2-\frac{\beta \C \rho^{(1-\delta_q)q}}{q}\left({1}-\frac{q\delta_q}{\t}\right)t^{q\delta_q},\quad \forall t\ge 0,
   \end{equation*}
   Since $q\delta_q<2$, the function $\tilde{g}(t)$ has a global minimum at negative level when \[t_{\min}:=\left(\frac{\beta \delta_q N\C \rho^{(1-\delta_q)q}(\t-q\delta_q)}{2\t d}\right)^{\frac{1}{2-q\delta_q}}>0\] and 
\[\tilde{g}(t_{\min})=-\big(\beta\rho^{(1-\delta_q)q}\big)^{\frac{2}{2-q\delta_q}}\Big(\frac{N\delta_q}{d}\Big)^{\frac{q\delta_q}{2-q\delta_q}}\left(\C\frac{\t-q\delta_q}{2\t}\right)\frac{2-q\delta_q}{q}<0,\]   
   and since $\beta<\beta_*\le \beta_2$, we have $\tilde{g}(t)>-\frac{d\sr^{N/2d}}{N}$. Observe that from \eqref{c999},
   \[m_{\beta}(\rho)\ge \frac{d\sr^{N/2d}}{N}+g(t)>0, \]
   which contradict the fact that $m_{\beta}(\rho)<0$. Hence $u_n\to\tilde{ u}$ strongly in $\mathcal{X},~ \e(\tilde{u})=m_{\beta}(\rho)$ and $\tilde{u}$ solves \eqref{eq:P} for some $\tilde{\lambda}<0. $ In order to show that any ground state is a local minimizer of $\e$ on $\mathcal{G}_{\tilde{\kappa}}$, we use the fact that $\e(\tilde{u})=m_{\beta}(\rho)<0,$ and then $\tilde{u}\in\M$, so by Lemma \ref{C12}, we have that $u\in\mathcal{M}_{\rho,\beta}^+\subset\mathcal{G}_{\tilde{\kappa}}$ and
   \begin{equation*}
       \e(\tilde{u})=m_{\beta}(\rho)=\inf_{\mathcal{G}_{\tilde{\kappa}}}\e,\quad\||x|^{-a}\na u\|_2^2<\tilde{\kappa}.
   \end{equation*} \QED
\section{Mass-critical perturbation}
This section proves Theorem~\ref{CC1} for $q=q_c$. 
Unlike the $q<q_c$ regime, the geometry of $\e\big|_{S_\rho}$ changes substantially, 
altering both the nature and multiplicity of critical points of the fibering map $\f(t)$ which we have discussed here.

\begin{lemma}
    $\mathcal{M}_{\rho,\beta}^0=\emptyset$ and $\mathcal{M}_{\rho,\beta}$ is a smooth manifold of co-dimension
one in $\S.$
\end{lemma}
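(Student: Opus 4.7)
The plan is to exploit the defining identity of the mass-critical exponent, namely $q_c\delta_{q_c}=2$, which aligns the scaling of the pure-power term with that of the kinetic term under the $\star$-action and produces a clean algebraic cancellation. First I would differentiate $\Phi_\beta^u$ twice to obtain
\[
(\Phi_\beta^u)''(0)=2\||x|^{-a}\na u\|_2^{2}-\beta q\delta_q^{2}\||x|^{-b}u\|_q^{q}-\t\,\||x|^{-b}u\|_{\t}^{\t}.
\]
Specializing to $q=q_c$ gives $q\delta_q=2$ and $q\delta_q^{2}=4/q_c=2\delta_{q_c}$, so substituting $P_\beta(u)=0$ in the form $\||x|^{-a}\na u\|_2^{2}=\beta\delta_{q_c}\||x|^{-b}u\|_{q_c}^{q_c}+\||x|^{-b}u\|_{\t}^{\t}$ makes the $\||x|^{-b}u\|_{q_c}^{q_c}$ contribution cancel exactly, leaving
\[
(\Phi_\beta^u)''(0)=(2-\t)\||x|^{-b}u\|_{\t}^{\t}.
\]

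For any $u\in\S$ we have $u\not\equiv 0$ (since $\rho>0$) and $|x|^{-b}>0$ a.e., hence $\||x|^{-b}u\|_{\t}>0$; combined with $\t>2$ this forces $(\Phi_\beta^u)''(0)<0$ at every point of $\M$, so $\mathcal{M}_{\rho,\beta}^{0}=\emptyset$ and in fact $\M=\mathcal{M}_{\rho,\beta}^{-}$. Note that this holds for every $\beta>0$, without any smallness assumption, in sharp contrast to the mass-subcritical regime where the bound $\beta<\beta_1$ was needed to exclude degenerate fiber points; the cancellation here is an exact algebraic consequence of $q_c\delta_{q_c}=2$, which is why this direction is markedly easier than its subcritical analogue.

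For the smooth-manifold claim, the plan is to apply the submersion / regular-value theorem to $P_\beta:\S\to\R$. I would test $P_\beta'(u)$ against the infinitesimal dilation direction $\psi:=\left.\tfrac{d}{dt}\right|_{t=0}(t\star u)=\tfrac{N-2a}{2}u+x\cdot\na u$, which lies in $T_u\S$ because the $\star$-action preserves the weighted $L^2$-mass by construction. A direct scaling computation using $\||x|^{-a}\na(t\star u)\|_2^{2}=e^{2t}\||x|^{-a}\na u\|_2^{2}$ and $\||x|^{-b}(t\star u)\|_r^{r}=e^{r\delta_r t}\||x|^{-b}u\|_r^{r}$ yields the identity $P_\beta(t\star u)=(\Phi_\beta^u)'(t)$, so
\[
\langle P_\beta'(u),\psi\rangle=\left.\frac{d}{dt}\right|_{t=0}P_\beta(t\star u)=(\Phi_\beta^u)''(0)\ne 0.
\]
Therefore $P_\beta|_{\S}$ has non-vanishing differential at every point of $\M$, and the implicit function theorem upgrades $\M$ to a smooth submanifold of $\S$ of codimension one. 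The only mild technical point is checking that $t\mapsto t\star u$ is $C^{1}$ as a curve into $\mathcal{X}$ so that $\psi$ genuinely defines a tangent vector at $u$, but this is routine given the explicit scaling formulas above.
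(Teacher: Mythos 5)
Your proposal is correct and follows essentially the same route as the paper: the paper likewise combines $P_\beta(u)=0$ with $(\Phi_\beta^u)''(0)=0$ and uses the mass-critical identity $q_c\delta_{q_c}=2$ to cancel the $\||x|^{-b}u\|_{q_c}^{q_c}$ term and force $\||x|^{-b}u\|_{\t}^{\t}=0$, a contradiction, and then invokes the implicit-function/submersion argument (by reference to the subcritical lemma) for the codimension-one manifold structure. Your write-up is in fact slightly more complete, since you make explicit the identity $\langle P_\beta'(u),\psi\rangle=(\Phi_\beta^u)''(0)\ne 0$ along the dilation direction and the observation that no smallness condition on $\beta$ is needed here, both of which the paper leaves implicit.
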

\begin{proof}
For contradiction, assume the existence of $u\in\mathcal{M}_{\rho,\beta}$.
 Then
    \begin{equation*}
        \| |x|^{-a}\na u\|_2^2=\beta\delta_q\||x|^{-b}u\|_q^q+\||x|^{-b}u\|_\t^\t,
    \end{equation*}
    and
     \begin{equation*}
        2\| |x|^{-a}\na u\|_2^2=\beta\delta_q^2\||x|^{-b}u\|_q^q+\t\||x|^{-b}u\|_\t^\t,
    \end{equation*}
    from which, we get $\||x|^{-b}u\|_\t^\t=0$. This is false. Now applying the same assertions and arguments as in Lemma \ref{C12}, we get the desired result.\qed
\end{proof}
\begin{lemma}\label{C10}
    For every $u\in S_{\rho}$ and \(\beta<\beta^*\), there exists a unique $t_*\in\R$ such that $t_*\star u\in \mathcal{M}_{\rho,\beta}$, $t_*$ is a unique critical point of $\f$, and is a strict maximum point at a positive level. Moreover
    \begin{itemize}
        \item[(i)] $\mathcal{M}_{\rho,\beta}=\mathcal{M}_{\rho,\beta}^-.$
        \item[(ii)] $\f$ is concave and strictly decreasing  on $(t_*,+\infty),$ and $t_*<0$ implies $P_{\beta}(u)<0.$
        \item[(iii)] The map $u\in S_{\rho}\mapsto t_*\in \R$ is of class $C^1.$
        \item[(iv)] If $P_{\beta}(u)<0,$ then $t_*<0.$
    \end{itemize}
\end{lemma}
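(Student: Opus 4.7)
The plan is to exploit the mass-critical identity $q_c\delta_{q_c}=2$, which collapses the fibering map to
\[
\Phi^u_\beta(t)=\frac{A(u)}{2}e^{2t}-\frac{B(u)}{\t}e^{\t t},
\]
with $A(u):=\| |x|^{-a}\nabla u\|_2^2-\beta\delta_{q_c}\| |x|^{-b}u\|_{q_c}^{q_c}$ and $B(u):=\| |x|^{-b}u\|_\t^\t>0$. The whole analysis then reduces to controlling the signs of $A$ and $B$. My first step is to prove $A(u)>0$ uniformly on $S_\rho$ when $\beta<\beta^*$: applying \eqref{c1} with $q=q_c$ gives $\| |x|^{-b}u\|_{q_c}^{q_c}\le C_{a,b}^{q_c}\| |x|^{-a}\nabla u\|_2^2\,\rho^{q_c-2}$, and the explicit value $\beta^*=q_c/(2 C_{a,b}^{q_c}\rho^{q_c-2})$ together with $\delta_{q_c}=2/q_c$ forces
\[
A(u)\ge \| |x|^{-a}\nabla u\|_2^2\bigl(1-\beta\,\delta_{q_c}\,C_{a,b}^{q_c}\,\rho^{q_c-2}\bigr)>0.
\]

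Once $A,B>0$ is in hand, I would solve $(\Phi^u_\beta)'(t)=A e^{2t}-B e^{\t t}=0$ algebraically to obtain the unique critical point $t_*=(\t-2)^{-1}\log(A/B)$, then compute $(\Phi^u_\beta)''(t_*)=(2-\t)A e^{2t_*}<0$ to deduce strict maximality, and substitute back to obtain $\Phi^u_\beta(t_*)=\frac{\t-2}{2\t}A e^{2t_*}>0$. By Corollary~\ref{CA1}, $t_*\star u\in\mathcal{M}_{\rho,\beta}$, and since $\mathcal{M}_{\rho,\beta}^0=\emptyset$ from the preceding lemma, the sign of $(\Phi^u_\beta)''(t_*)$ places $t_*\star u$ inside $\mathcal{M}_{\rho,\beta}^-$, which yields claim $(i)$.

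For $(ii)$, on $(t_*,+\infty)$ one has $e^{(\t-2)t}>A/B$, so $(\Phi^u_\beta)'<0$ there; concavity follows from the stronger inequality $e^{(\t-2)t}>2A/(\t B)$, valid on the same interval because $\t>2$ implies $A/B>2A/(\t B)$. The identity $P_\beta(u)=(\Phi^u_\beta)'(0)$, read off directly from the definitions, makes both the second half of $(ii)$ and item $(iv)$ immediate: $t_*<0$ puts $0$ in the decreasing range, while $P_\beta(u)<0$ conversely forces $0$ into that range, hence $t_*<0$. Part $(iii)$ will follow from the implicit function theorem applied to $\Psi(t,u):=(\Phi^u_\beta)'(t)$ at $(t_*,u)$, using the nondegeneracy $\partial_t\Psi(t_*,u)=(\Phi^u_\beta)''(t_*)<0$ together with the smooth dependence of $(A(u),B(u))$ on $u\in S_\rho$.

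The main obstacle is the a priori estimate $A(u)>0$; without it, $\Phi^u_\beta$ would be monotonically decreasing on $\R$ and $\mathcal{M}_{\rho,\beta}$ would be empty. This is precisely the structural role of the threshold $\beta^*$, and the only point in the argument where the mass-critical exponent and the sharp CKN constant $C_{a,b}$ interact nontrivially; everything after is elementary one-variable calculus on a two-term exponential.
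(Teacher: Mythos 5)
Your proposal is correct and follows essentially the same route as the paper: both exploit the mass-critical identity $q_c\delta_{q_c}=2$ to reduce $\Phi^u_\beta$ to the two-term form $\tfrac{A}{2}e^{2t}-\tfrac{B}{\t}e^{\t t}$, use the CKN inequality \eqref{c1} and the threshold $\beta<\beta^*$ to force $A(u)>0$, and then read off the unique strict maximum, the sign facts in (ii) and (iv), and the $C^1$ dependence via the implicit function theorem. Your version is slightly more explicit (closed form for $t_*$ and the value $\Phi^u_\beta(t_*)=\tfrac{\t-2}{2\t}Ae^{2t_*}$), but the substance is identical.
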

\begin{proof}
Note that
\begin{equation}\label{c13}
    \f(t)=\e(t\star u)=\left(\frac{1}{2}\||x|^{-a}\na u\|_2^2-\frac{\beta }{q_c}\||x|^{-b}u\|_{q_c}^{q_c}\right)e^{2t}-\frac{e^{\t t}}{\t}\||x|^{-b}u\|_{\t}^{\t},
\end{equation}
and using \eqref{c1}, we get
\begin{equation*}
    \frac{1}{2}\||x|^{-a}\na u\|_2^2-\frac{\beta }{q_c}\||x|^{-b}u\|_{q_c}^{q_c}\ge \left(\frac{1}{2}-\frac{\beta}{q_c}C^{q_c}_{a,b}\rho^{\frac{4d}{N-2a+2b}}\right)\n\|_2^2.
\end{equation*}
For \(\beta<\beta^*\), we infer to $\frac{1}{2}\||x|^{-a}\na u\|_2^2-\frac{\beta }{q_c}\||x|^{-b}u\|_{q_c}^{q_c}>0.$ From \eqref{c13}, we see $\f$ has a unique critical point $t_*$, which is a strict maximum point at positive level. Moreover, if $u\in \mathcal{M}_{\rho,\beta},$ then $t_*=0$ is a maximum point, and $(\f)''(0)\le0.$ In view of $\mathcal{M}_{\rho,\beta}^0=\emptyset,$ we have $(\f)''(0)<0$. Thus, $\mathcal{M}_{\rho,\beta}=\mathcal{M}_{\rho,\beta}^-$. To prove $(iii)$, we apply the implicit function theorem as in Lemma \ref{C12}. Observe that $(\f)''(0)<0$ if and only if $t>t_*$. So $P_{\beta}(u)=(\f)'(0)<0$ if and only if $t_*<0.$\qed
\end{proof}
\begin{lemma}\label{C33}

   For \(\beta<\beta^*\), $m_{\beta}(\rho):=\displaystyle\inf_{u\in \mathcal{M}_{\rho,\beta}}\e(u)>0.$ Moreover, there exists $k>0$ sufficiently small such that
    \begin{equation*}
        0<\sup_{\overline{\mathcal{G}}_k}\e<m_{\beta}(\rho), \quad\text{and}\quad u\in \overline{\mathcal{G}}_k\implies\e(u),~P_{\beta}(u)>0,
    \end{equation*}
    where $\mathcal{G}_k=\{u\in S_{\rho}:\n\|_2^2<k\}.$
\end{lemma}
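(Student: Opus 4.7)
The plan is to split the argument into two essentially independent pieces: first establishing $m_\beta(\rho)>0$ via a coercivity estimate on $\M$, then selecting $k$ small enough so that $\e$ and $P_\beta$ are positive on $\overline{\mathcal{G}}_k$ with $\sup_{\overline{\mathcal{G}}_k}\e$ strictly below $m_\beta(\rho)$.

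For the lower bound on $m_\beta(\rho)$, I would begin from the Pohozaev identity $P_\beta(u)=0$ on $\M$. In the mass-critical regime $q=q_c$, a direct computation gives $\delta_{q_c}=2/q_c$, hence $q_c\delta_{q_c}=2$ and
\[
\||x|^{-a}\na u\|_2^2=\tfrac{2\beta}{q_c}\||x|^{-b}u\|_{q_c}^{q_c}+\||x|^{-b}u\|_{\t}^{\t}.
\]
Applying \eqref{c1} with $q=q_c$ together with $\||x|^{-a}u\|_2=\rho$ gives $\tfrac{2\beta}{q_c}\||x|^{-b}u\|_{q_c}^{q_c}\leq\theta\||x|^{-a}\na u\|_2^2$, where $\theta:=\tfrac{2\beta}{q_c}C_{a,b}^{q_c}\rho^{4d/(N-2a+2b)}$; the hypothesis $\beta<\beta^*$ forces $\theta<1$, so $\||x|^{-b}u\|_{\t}^{\t}\geq(1-\theta)\||x|^{-a}\na u\|_2^2$, and the definition of $\sr$ then yields the uniform estimate $\||x|^{-a}\na u\|_2^{\t-2}\geq(1-\theta)\sr^{\t/2}$. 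Using $P_\beta(u)=0$ to eliminate the critical term inside $\e(u)$, one rewrites
\[
\e(u)=\tfrac{d}{N}\||x|^{-a}\na u\|_2^2-\tfrac{2d\beta}{Nq_c}\||x|^{-b}u\|_{q_c}^{q_c}\geq\tfrac{d}{N}(1-\theta)\||x|^{-a}\na u\|_2^2,
\]
which combined with the gradient lower bound produces an explicit positive lower bound on $m_\beta(\rho)$.

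For the second claim, fix $u\in\overline{\mathcal{G}}_k$ and put $s:=\||x|^{-a}\na u\|_2\leq\sqrt{k}$. The same two ingredients—\eqref{c1} for the subcritical term and the definition of $\sr$ for the critical term—give
\[
P_\beta(u)\geq(1-\theta)s^2-\sr^{-\t/2}s^{\t},\qquad\e(u)\geq\tfrac{1-\theta}{2}s^2-\tfrac{1}{\t}\sr^{-\t/2}s^{\t}.
\]
Since $\t>2$ and $1-\theta>0$, both right-hand sides are strictly positive for every $s\in(0,s_0]$ with $s_0$ small enough, which fixes a first smallness requirement on $k$. On the other hand, the crude bound $\e(u)\leq\tfrac{1}{2}s^2\leq k/2$ yields $\sup_{\overline{\mathcal{G}}_k}\e\leq k/2$, so imposing in addition $k<2m_\beta(\rho)$ forces $\sup_{\overline{\mathcal{G}}_k}\e<m_\beta(\rho)$. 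Any $k$ meeting the conjunction of these smallness conditions works. The only delicate point is the bookkeeping of constants: the single threshold $\beta^*$ must simultaneously force coercivity on $\M$ (through $\theta<1$) and positivity of $P_\beta$ and $\e$ on a small ball in $S_\rho$, and the mass-critical algebraic identity $q_c\delta_{q_c}=2$ is what makes the comparisons clean by reducing the subcritical contributions to a multiple of $s^2$.
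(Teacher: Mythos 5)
Your argument is correct and follows essentially the same route as the paper's proof: use $P_\beta(u)=0$ together with \eqref{c1} and the definition of $\sr$ to get the uniform lower bound $\||x|^{-a}\na u\|_2^{\t-2}\ge(1-\theta)\sr^{\t/2}$ on $\M$, rewrite $\e$ on $\M$ via the identity $q_c\delta_{q_c}=2$ to conclude $m_\beta(\rho)>0$, and then obtain positivity of $\e$ and $P_\beta$ on $\overline{\mathcal{G}}_k$ plus $\sup_{\overline{\mathcal{G}}_k}\e\le k/2<m_\beta(\rho)$ for $k$ small. Your version is in fact slightly more careful with the constant $\tfrac{1}{2}$ in the small-ball lower bound for $\e$, which the paper's displayed inequality drops.
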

\begin{proof}
    Let $u\in\mathcal{M}_{\rho,\beta}$. Using \eqref{c1} and the definition of $\sr$, we get
    \begin{align}\label{c15}
     \notag  \||x|^{-a}\na u\|_2^2&=\beta\frac{2}{q_c}\||x|^{-b}u\|_q^q+\||x|^{-b}u\|_{\t}^{\t} \\
       &\le \beta\frac{2}{q_c}C^{q_c}_{a,b}\rho^{\frac{4d}{N-2a+2b}}\n\|_2^2+\sr^{-\frac{\t}{2}}\n\|_2^{\t}.
    \end{align}
    Combining \eqref{c15} and the fact that $\beta<\beta^*$, we obtain
    \begin{equation*}
        \n\|_2^{\t}\ge \sr^{\frac{\t}{2}}\left(1-\beta\frac{2}{q_c}C^{q_c}_{a,b}\rho^{\frac{4d}{N-2a+2b}}\right)\n\|_2^2>0.
    \end{equation*}
    Hence
    \begin{equation}\label{c16}
        \inf_{\mathcal{M}_{\rho,\beta}}\n\|_2^2>0.
    \end{equation}
    Therefore, from \eqref{c16} and the fact that $P_{\beta}(u)=0$, we conclude
    \begin{align*}
        \e(u)&=\frac{d}{N}\left(\n\|_2^2-\beta\frac{2}{q_c}C^{q_c}_{a,b}\rho^{\frac{4d}{N-2a+2b}}\p\|_{q_c}^{q_c}\right)\\
        &\ge\frac{d}{N}\left(1-\beta\frac{2}{q_c}C^{q_c}_{a,b}\rho^{\frac{4d}{N-2a+2b}}\right)\n\|_2^2>0.
    \end{align*}
    Hence,
    \begin{equation*}
    m_{\beta}(\rho)=\inf_{\mathcal{M}_{\rho,\beta}}\e>0.
    \end{equation*}
    Again applying the definition of $\sr$, inequality \eqref{c1}, and the fact that $u\in\overline{\mathcal{G}}_k$ with $k$ small enough, we infer that
    \begin{equation*}
        \e(u)\ge \left(1-\beta\frac{2}{q_c}C^{q_c}_{a,b}\rho^{\frac{4d}{N-2a+2b}}\right)\n\|_2^2-\frac{1}{\t}\sr^{-\frac{\t}{2}}\n\|_2^{\t}>0,
    \end{equation*}
    and
    \begin{align*}
        P_{\beta}(u)&=\||x|^{-a}\na u\|_2^2-\beta\delta_{q_c}\||x|^{-b}u\|_{q_c}^{q_c}-\||x|^{-b}u\|_{\t}^{\t}\\
        &\ge \left(1-\beta\frac{2}{q_c}C^{q_c}_{a,b}\rho^{\frac{4d}{N-2a+2b}}\right)\n\|_2^2-\sr^{-\frac{\t}{2}}\n\|_2^{\t}>0.
    \end{align*}
     Thus 
    \begin{equation*}
        \e(u)\le\frac{1}{2}\n\|_2^2<m_{\beta}(\rho).
    \end{equation*}
    This completes the proof. \QED
\end{proof}

\begin{lemma}\label{C17}
    If $\beta<\beta^*$, then $m_{\beta}(\rho)<\frac{d}{N}\sr^{N/2d}.$
\end{lemma}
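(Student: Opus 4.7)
The strategy is to exhibit a single competitor $u\in S_\rho$ whose projection onto $\mathcal{M}_{\rho,\beta}$ has energy strictly below $\tfrac{d}{N}\mathcal{S}(a,b)^{N/2d}$, using a renormalized Caffarelli--Kohn--Nirenberg extremal. By Lemma \ref{C10}, every $u\in S_\rho$ admits a unique $t_\ast=t_\ast(u)\in\mathbb{R}$ with $t_\ast\star u\in\mathcal{M}_{\rho,\beta}$, and this $t_\ast$ is the strict global maximum of $\Phi^u_\beta$; consequently
\[
m_\beta(\rho)\;\le\; \mathcal{E}_\beta(t_\ast\star u)\;=\;\max_{t\in\mathbb{R}}\Phi^u_\beta(t).
\]

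The first step is to compute this maximum in closed form. Writing
$\Phi^u_\beta(t)=A\,e^{2t}-B\,e^{2^\sharp t}$, where $A=\tfrac{1}{2}\||x|^{-a}\nabla u\|_2^2-\tfrac{\beta}{q_c}\||x|^{-b}u\|_{q_c}^{q_c}$ and $B=\tfrac{1}{2^\sharp}\||x|^{-b}u\|_{2^\sharp}^{2^\sharp}$, setting $(\Phi^u_\beta)'(t)=0$ and substituting back gives, after elementary algebra,
\[
\max_{t\in\mathbb{R}}\Phi^u_\beta(t)\;=\;\frac{d}{N}\cdot\frac{\bigl(\||x|^{-a}\nabla u\|_2^2-\tfrac{2\beta}{q_c}\||x|^{-b}u\|_{q_c}^{q_c}\bigr)^{N/2d}}{\||x|^{-b}u\|_{2^\sharp}^{N/d}}.
\]
The hypothesis $\beta<\beta^*$ together with the \eqref{c1}-based estimate already used in the proof of Lemma \ref{C10} forces the numerator to be strictly positive, so the right-hand side is well defined. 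The target inequality therefore reduces to finding $u$ with
\[
\||x|^{-a}\nabla u\|_2^2\;-\;\tfrac{2\beta}{q_c}\||x|^{-b}u\|_{q_c}^{q_c}\;<\;\mathcal{S}(a,b)\,\||x|^{-b}u\|_{2^\sharp}^{2}.
\]

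As test function I take $u_\varepsilon$, an $L^2(\mathbb{R}^N;|x|^{-2a})$-renormalization of $U_\varepsilon$ with $\||x|^{-a}u_\varepsilon\|_2=\rho$, inserting a smooth cutoff in the low-dimensional parameter regime in which $U_\varepsilon$ fails to belong to $L^2(\mathbb{R}^N;|x|^{-2a})$. Because $U_\varepsilon$ realizes the CKN best constant, the scale-invariant identity $\||x|^{-a}\nabla U_\varepsilon\|_2^2=\mathcal{S}(a,b)\||x|^{-b}U_\varepsilon\|_{2^\sharp}^2$ is preserved under multiplication by a constant. The asymptotic expansions for the CKN minimizers collected in the Appendix then yield
\[
\||x|^{-a}\nabla u_\varepsilon\|_2^2-\mathcal{S}(a,b)\,\||x|^{-b}u_\varepsilon\|_{2^\sharp}^2 \;\longrightarrow\;0 \quad\text{as }\varepsilon\to 0^+,
\]
while simultaneously the ratio $\||x|^{-b}u_\varepsilon\|_{q_c}^{q_c}\big/\||x|^{-b}u_\varepsilon\|_{2^\sharp}^2$ remains bounded below by a positive constant independent of $\varepsilon$. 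Plugging both estimates into the closed-form formula produces, for all sufficiently small $\varepsilon$,
\[
m_\beta(\rho)\;\le\;\max_{t\in\mathbb{R}}\Phi^{u_\varepsilon}_\beta(t)\;<\;\frac{d}{N}\mathcal{S}(a,b)^{N/2d}.
\]

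The main obstacle is the sharp control of the truncation correction in the regime where $U_\varepsilon$ just fails to lie in $L^2(\mathbb{R}^N;|x|^{-2a})$: one must verify that the positive perturbation $\tfrac{2\beta}{q_c}\||x|^{-b}u_\varepsilon\|_{q_c}^{q_c}$ strictly dominates the error introduced in the CKN quotient by the cutoff, uniformly as $\varepsilon\to 0^+$. This balance is precisely what the refined asymptotic expansions for the extremals $U_\varepsilon$ collected in the Appendix are designed to deliver, and it is at this point that the delicate dimensional and parameter restrictions in the statement enter the argument.
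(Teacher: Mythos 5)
Your overall architecture is the same as the paper's: take the truncated, mass-normalized extremal $v_\varepsilon=\rho\,\zeta U_\varepsilon/\||x|^{-a}\zeta U_\varepsilon\|_2\in S_\rho$, project it onto $\mathcal{M}_{\rho,\beta}$ via Lemma \ref{C10}, and show that $\max_{t}\Phi^{v_\varepsilon}_\beta(t)<\frac{d}{N}\mathcal{S}(a,b)^{N/2d}$ using the Appendix asymptotics. Your closed-form evaluation of the maximum of the two-term fiber map, and the resulting reduction to
\[
\||x|^{-a}\nabla u\|_2^2-\tfrac{2\beta}{q_c}\||x|^{-b}u\|_{q_c}^{q_c}<\mathcal{S}(a,b)\,\||x|^{-b}u\|_{2^\sharp}^{2},
\]
are correct and in fact cleaner than the paper's chain of inequalities. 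The gap is in the two asymptotic claims you use to close the argument; both are false on a substantial part of the parameter range, precisely because the normalizing constant $\rho/\||x|^{-a}\zeta U_\varepsilon\|_2$ blows up as $\varepsilon\to0^+$. First, the deficit for the normalized function is
\[
\||x|^{-a}\nabla v_\varepsilon\|_2^2-\mathcal{S}(a,b)\||x|^{-b}v_\varepsilon\|_{2^\sharp}^{2}
=\frac{\rho^2}{\||x|^{-a}\zeta U_\varepsilon\|_2^{2}}\,O\bigl(\varepsilon^{\frac{N-2d}{2d}}\bigr),
\]
and by Proposition \ref{C11}\,(iv) the prefactor is itself of order $\varepsilon^{-\frac{N-2d}{2d}}$ whenever $a>\max\{0,\frac{N-4}{2}\}$, so the deficit is only $O(1)$, not $o(1)$. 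Second, the gain $\||x|^{-b}v_\varepsilon\|_{q_c}^{q_c}/\||x|^{-b}v_\varepsilon\|_{2^\sharp}^{2}$ is comparable to $\||x|^{-b}\zeta U_\varepsilon\|_{q_c}^{q_c}\big/\||x|^{-a}\zeta U_\varepsilon\|_2^{4d/(N-2a+2b)}$ (since $q_c-2=\frac{4d}{N-2a+2b}$), and Proposition \ref{C79} shows this is bounded below by a positive constant \emph{only} when $N\ge5$, $0<a<\frac{N-4}{2}$ and $q_c>\frac{N}{N-2(1+a)+b}$; otherwise it decays like a power of $\varepsilon$ or of $|\log\varepsilon|^{-1}$.

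Consequently the conclusion cannot be read off from ``deficit $\to0$, gain bounded below''; one must divide the target inequality by $\||x|^{-b}v_\varepsilon\|_{2^\sharp}^{2}$ (which renders the left-hand side scale-invariant and genuinely $O(\varepsilon^{\frac{N-2d}{2d}})$) and then compare that rate with the exact decay rate of the gain given by Proposition \ref{C79}, checking in each parameter regime that the gain's exponent is strictly smaller than $\frac{N-2d}{2d}$. This rate comparison is the substance of the lemma --- it is where the threshold $\beta<\beta^*$ guarantees positivity of the coefficient and where the additional restriction $(b-a)>\frac16$ for $N=3$ enters --- and your final paragraph correctly flags it as the delicate point, but the two intermediate statements you assert would, if true, bypass it entirely, and they do not hold as stated.
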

\begin{proof}
Let $\displaystyle v_{\varepsilon}:=\rho\frac{u_{\varepsilon}}{\||x|^{-a}u_{\varepsilon}\|_2}$, where $u_\var$ is defined in \eqref{c221}. Then $\ve\in S_{\rho}.$ Now, using Lemma \ref{C10}, there exists $t_{\beta}^{\varepsilon}$ such that
    \begin{equation*}
        m_{\beta}(\rho)=\inf_{u\in \mathcal{M}_{\rho,\beta}}\e\le \e(t_{\beta}^{\varepsilon}\star \ve)=\max_{k\in\R}\e(k\star \ve),\quad \forall\varepsilon>0.
    \end{equation*}
    So it is enough to prove that
   $\displaystyle\max_{k\in \R}\f(k)<\frac{d}{N}\sr^{N/2d}.$ Since $\fo$ has a unique critical point $t_0,$ which acts as a strict maximum point. Precisely, \(
       e^{t_0}=\left(\frac{\||x|^{-a}\na\ve\|_2^2}{\||x|^{-b}\ve\|_{\t}^{\t}}\right)^{\frac{1}{\t-2}}.\)
   It gives us
   \begin{align*}
       \fo(t_0)&=\frac{d}{N}\left(\frac{\||x|^{-a}\na\ve\|_2}{\||x|^{-b}\ve\|_{\t}}\right)^{\frac{2\t}{\t-2}}\\
       &=\frac{d}{N}\left(\frac{\sr^{N/2d}+O(\varepsilon^{\frac{N-2d}{2d}})}{\Big(\sr^{N/2d}+O(\varepsilon^{\frac{N}{2d}})\Big)^{2/\t}}\right)^{\frac{2\t}{\t-2}}\\
       &=\frac{d}{N}\sr^{N/2d}+O(\varepsilon^{\frac{N-2d}{2d}}).
   \end{align*}
      For a unique maximum point $t_{\beta}^{\varepsilon}$ of $\fom$, we have $P_{\beta}(t_{\beta}^{\varepsilon}\star \ve)=0$, and this implies that
   \begin{align*}
       e^{(\t-2)t_{\beta}^{\varepsilon}}&=\frac{\||x|^{-a}\na \ve\|_2^2}{\||x|^{-b}\ve\|_\t^\t}-\frac{2\beta}{q_c}\frac{\||x|^{-b}\ve\|_q^q}{\||x|^{-b}\ve\|_\t^\t}\\
       &\ge\left(1-\frac{2\beta}{q_c}C^{q_c}_{a,b}\rho^{\frac{4d}{N-2a+2b}}\right)\frac{\||x|^{-a}\na \ve\|_2^2}{\||x|^{-b}\ve\|_\t^\t}.
   \end{align*}
   Since \(\displaystyle\sup_{\R}\fom=\fom(t_{\beta}^{\varepsilon})=\fo(t_{\beta}^{\varepsilon})-\frac{\beta}{q_c}e^{2t_{\beta}^{\varepsilon}}\||x|^{-b}\ve\|_{q_c}^{q_c}\), it implies
   \begin{align*}
      \sup_{\R}\fom &\le \sup_{\R}\fo-\frac{\beta}{q_c}\left(1-\frac{2\beta}{q_c}C_{a,b}^{q_c}\rho^{\frac{4d}{N-2a+2b}}\right)^{\frac{2}{\t-2}}\left(\frac{\||x|^{-a}\ve\|_2^2}{\||x|^{-b}\ve\|_\t^\t}\right)^{\frac{2}{\t-2}}\||x|^{-b}\ve\|_{q_c}^{q_c}\\
      &\le\frac{d}{N}\sr^{N/2d}\quad+O(\var^{\frac{N-2d}{2d}})\\\notag&\quad-\frac{\beta}{q_c}\left(1-\frac{2\beta}{q_c}C^{q_c}_{a,b}\rho^{\frac{4d}{N-2a+2b}}\right)^{\frac{2}{\t-2}}\rho^{\frac{4d}{N-2a+2b}}\dfrac{\||x|^{-b}\ue\|_{q_c}^{q_c}\||x|^{-a}\na\ue\|_2^{\frac{4}{\t-2}}}{\||x|^{-a}\ue\|_2^{\frac{4d}{N-2a+2b}}\||x|^{-b}\ue\|_\t^{\frac{2\t}{\t-2}}}\\
       &\le\frac{d}{N}\sr^{N/2d}+O(\var^{\frac{N-2d}{2d}})-\tilde{C}\frac{\||x|^{-b}\ue\|_{q_c}^{q_c}}{\||x|^{-a}\ue\|_2^{\frac{4d}{N-2a+2b}}},
   \end{align*}
where $\tilde C$ is a positive constant independent of $\var$. 
Using above Proposition \ref{C79}, we deduce that
$$\sup_{\R}\fom<\frac{d}{N}\sr^{N/2d},$$
for any small $\var>0$. Hence, the result follows.\qed
\end{proof}
 \begin{definition}
	\cite{wei2022normalized} Let $X$ be a topological space and $B$ be a closed subset of $X$. We say that a class $\mathcal{T}$ of compact subsets of $X$ is a homotopy-stable family with an extended boundary $B$ if, for any set $A$ in $\mathcal{T}$ and any $\xi \in C([0,1]\times X;X)$ satisfying $\xi(t,x)=x$ for all $(t,x)\in (\{0\}\times X)\cup ([0,1]\times B)$ it hold that $\xi(\{1\}\times A)\in \mathcal{T}$.
\end{definition}

\begin{lemma}\label{a13} 
	\cite{wei2022normalized} Let $\varphi$ be a $C^1$ function on a complete connected $C^1-$ Finsler manifold $X$ (without boundary) and consider a homotopy-stable family $\mathcal{F}$ of compact subset of $X$ with a closed boundary $B$. Set $m=m(\varphi,\mathcal{F})$ and let $F$ be closed subset of $X$ satisfying 
	\begin{itemize}
		\item[(i)]  $(A\cap F)\backslash B\ne \emptyset$ for every $A\in \mathcal{F}$,
		\item[(ii)] $\displaystyle\sup _\kappa (B)\le m\le \inf_ \kappa(F).$
	\end{itemize} 
Then, for any sequence of sets $\{A_n\}_{n\in N}$ in $\mathcal{F}$ such that $\displaystyle\lim_{n\to\infty}\sup_{A_n} \kappa=m$, there exists a sequence $\{x_n\}$ in $X$ such that
$$\lim\limits_{n\to\infty}\kappa(x_n)=m,~~\lim\limits_{n\to\infty}\|d\kappa(x_n)\|=0,~~\lim\limits_{n\to\infty}dist(x_n,F)=0,~~\lim\limits_{n\to\infty}dist(x_n,A_n)=0.$$ 
\end{lemma}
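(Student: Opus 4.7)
The plan is to adapt Ghoussoub's general minimax principle through a deformation argument by contradiction, since Lemma \ref{a13} is essentially a localization of Ekeland's variational principle on a Finsler manifold. First, I would suppose the conclusion fails: then there exist $\varepsilon_{0},\delta_{0}>0$ and $n_{0}\in\mathbb{N}$ such that for every $n\ge n_{0}$ and every $x\in X$ satisfying
\[
|\varphi(x)-m|\le 2\varepsilon_{0},\qquad \mathrm{dist}(x,F)\le 2\delta_{0},\qquad \mathrm{dist}(x,A_{n})\le 2\delta_{0},
\]
one has $\|d\varphi(x)\|\ge 2\varepsilon_{0}$. Hypothesis (ii) would then be used to shrink $\varepsilon_{0}$ so that $\sup_{B}\varphi\le m-3\varepsilon_{0}$, ensuring that the bad region where the flow will act is separated from $B$.

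The second step is to construct a locally Lipschitz pseudo-gradient vector field $V$ on the open set $U$ where $\|d\varphi\|>\varepsilon_{0}$, with $\|V(x)\|\le 1$ and $\langle d\varphi(x),V(x)\rangle>\varepsilon_{0}/2$. I would multiply $V$ by a locally Lipschitz cutoff $\chi$ supported in the tubular neighborhood
\[
\mathcal{N}_{n}:=\bigl\{x:\,|\varphi(x)-m|<2\varepsilon_{0},\ \mathrm{dist}(x,F)<2\delta_{0},\ \mathrm{dist}(x,A_{n})<2\delta_{0}\bigr\},
\]
equal to $1$ on the smaller neighborhood $\mathcal{N}_{n}^{1/2}$ and vanishing outside $\mathcal{N}_{n}$; the choice of $\varepsilon_{0}$ guarantees $\chi\equiv 0$ on $B$. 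The associated ODE $\dot{\eta}=-\chi V\circ\eta$ with $\eta(0,x)=x$ defines a continuous deformation $\eta:[0,1]\times X\to X$ that fixes $B$ pointwise and moves each $x\in\mathcal{N}_{n}^{1/2}$ with $\|\dot{\eta}\|\le 1$ while decreasing $\varphi$ at rate at least $\varepsilon_{0}/2$.

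For $n$ large enough, since $\sup_{A_{n}}\varphi\to m$ and $A_{n}$ is compact, almost all of $A_{n}$ lies in $\mathcal{N}_{n}^{1/2}$, so after time $s_{0}:=\min\{1,\delta_{0}\}$ the set $A_{n}':=\eta(\{1\}\times A_{n})$ satisfies $\sup_{A_{n}'}\varphi\le m-\tfrac{s_{0}\varepsilon_{0}}{4}$; the deformation never leaves the $2\delta_{0}$-tube around $F$, so condition (i) is not violated. The homotopy-stability of $\mathcal{F}$ with boundary $B$ then yields $A_{n}'\in\mathcal{F}$, contradicting $m=\inf_{A\in\mathcal{F}}\sup_{A}\varphi$.

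The main obstacle is the joint localization: one must design the cutoff so that the deformation \emph{simultaneously} respects the level constraint near $m$, the proximity to $F$, and the proximity to $A_{n}$ (so that $\mathrm{dist}(x_{n},F)$ and $\mathrm{dist}(x_{n},A_{n})$ both tend to $0$ along the Palais--Smale sequence extracted at the end). This forces a delicate balance between the flow time $s_{0}$ and the radii $\delta_{0}$, and is precisely the point where the argument departs from the classical deformation lemma that only tracks the sublevel sets of $\varphi$.
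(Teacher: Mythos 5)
This lemma is not proved in the paper at all: it is quoted verbatim from \cite{wei2022normalized} (and is ultimately Ghoussoub's general minimax principle with a dual set), so there is no in-paper argument to compare yours against. Judged on its own, your deformation-by-contradiction outline is a legitimate route to this result, and you correctly identify the negation of the conclusion, the need for a pseudo-gradient field with a cutoff vanishing on $B$, and the fact that the simultaneous control of $\mathrm{dist}(x_n,F)$ and $\mathrm{dist}(x_n,A_n)$ is the delicate point.

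However, there is a genuine gap in how you reach the contradiction. You claim that ``almost all of $A_n$ lies in $\mathcal{N}_n^{1/2}$'' and conclude $\sup_{A_n'}\varphi\le m-\tfrac{s_0\varepsilon_0}{4}$, contradicting $m=\inf_{A\in\mathcal F}\sup_A\varphi$. This does not follow: a point of $A_n$ at which $\varphi$ is within $\varepsilon_0$ of $m$ need not lie within $2\delta_0$ of $F$, and your cutoff $\chi$ annihilates the flow outside the tube around $F$, so such points are not deformed and $\sup_{A_n'}\varphi$ can remain $\ge m$. The dual set $F$ must instead enter through hypothesis (i) applied to the \emph{deformed} set: since $A_n':=\eta(\{1\}\times A_n)\in\mathcal F$ by homotopy-stability, there exists $y\in(A_n'\cap F)\setminus B$, and by (ii) $\varphi(y)\ge\inf_F\varphi\ge m$. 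Writing $y=\eta(1,x)$ with $x\in A_n$, one checks that $x$ satisfies $\mathrm{dist}(x,A_n)=0$, $\mathrm{dist}(x,F)\le s_0\le\delta_0$ (the flow has speed at most $1$), and $m\le\varphi(\eta(1,x))\le\varphi(x)\le\sup_{A_n}\varphi\le m+o(1)$; hence $x$ lies in the region where, by the contradiction hypothesis, $\|d\varphi\|\ge 2\varepsilon_0$ and $\chi\equiv 1$, so the flow strictly decreases $\varphi$ along $\eta(\cdot,x)$ by a definite amount, giving $\varphi(y)<m$ --- the desired contradiction. Without this rerouting, your argument as written does not close; with it, the traced-back points $x=x_n$ are exactly the Palais--Smale sequence satisfying all four limits, which also resolves the ``joint localization'' obstacle you flag at the end.
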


\textbf{Proof of Theorem \ref{CC1} (Mass-critical case).}
 Let $k>0$ be as in Lemma \ref{C33}. Define functional $\tilde{\e}:\R\times\mathcal{X}\to\R$ and closed sublevel set as 
   \begin{equation}\label{c40}
   \begin{aligned}
       \tilde{\e}(t,u)
       &:=\e(t\star u)=\left(\frac{1}{2}\n\|_2^2-\frac{\beta}{q_c}\p_{q_c}^{q_c}\right)e^{2t}-\frac{e^{\t t}}{\t}\p\|_\t^\t.\\
       \e^{\omega}&:=\{u\in\S:\e(u)\le\omega\}.
         \end{aligned}
   \end{equation}
  
   Clearly, $\tilde{\mathcal{E}}_{\beta}$ is of class $C^1$. Let
   \begin{equation*}
       \Gamma:=\{\gamma=(\alpha,\eta)\in (C([0,1],\R\times\S) :\gamma(0)\in (0,\overline{\mathcal{G}}_k), \gamma(1)\in (0,\e^0)\}
   \end{equation*}
   with associated minimax level 
   \begin{equation*}
\sigma(\rho,\beta):=\inf_{\gamma\in\Gamma}\max_{(t,u)\in\gamma([0,1])}\tilde{\e(t,u)}.
   \end{equation*}
   Since $\||x|^{-a}\na(t\star u)\|\to 0^+$ as $t\to -\infty$ and $\e(t\star u)\to -\infty$ as $t\to+\infty$. For $u\in S_\rho$, there exists $t_0<<-1$ and $t_1>>1$ such that 
   \begin{equation}\label{c34}
       \gamma_u:\tau\in[0,1]\to (0,((1-\tau)t_0+\tau t_1)\times u)\in\R\times \S
   \end{equation}
   is a path in $\Gamma$. For any $\gamma=(\alpha,\eta)\in\Gamma$, define \(\mathcal{Q}_{\gamma}:[0,1]\to\R\), as \(t\mapsto (\alpha(t)\star\eta(t)). \)
    Then, by Lemma \ref{C33}, we observe that $\mathcal{Q}_{\gamma}(0)=P_{\beta}(\eta(0))>0.$ Since $\Phi_{\beta}^{\eta(1)}>0$ for each $t\in(-\infty,t_{\eta(1)})$ and $\Phi_{\beta}^{\eta(1)}(0)=\e(\eta(1))\le0$, we have $t_{\eta(1)}<0$. Therefore, by Lemma \ref{C10}, we get $\mathcal{Q}_{\gamma}(1)=P_{\beta}(\eta(1))<0.$ Also, the map $\tau\mapsto\alpha(\tau)\star\eta(\tau)$ is continuous from $[0,1]$ to $\mathcal{X}$, so we infer that there exists $\tau_{\gamma}\in(0,1)$ such that $\mathcal{Q}_{\gamma}(\tau_{\ga})=0$, consequently, $\alpha(\tau_{\ga})\star\eta(\tau_{\ga})\in\M$, which implies that
   \begin{equation*}
       \max_{\gamma([0,1])}\tilde{\e}\ge\tilde{\e}(\gamma(\tau_{\gamma}))=\e(\alpha(\tau)\star\eta(\tau))\ge\inf_{\M}\e=m_{\beta}(\rho).
   \end{equation*}
   Hence, $\sigma(\rho,\beta)\ge m_{\beta}(\rho)$. Moreover, if $u\in\mathcal{M}_{\rho,\beta}^-$, then $\gamma_u$ defined in \eqref{c34} is a path in $\Ga$ with 
   \begin{equation*}
       \e(u)=\max_{\ga([0,1])}\tilde{\e}\ge\sigma(\rho,\beta),
   \end{equation*}
   which gives
   \begin{equation*}
       m_{\beta}(\rho)\ge\sigma(\rho,\beta).
   \end{equation*}
   Using this with Lemma \ref{C33}, we deduce
\begin{equation*}
       \sigma(\rho,\beta)=m_{\beta}(\rho)>\sup_{(\overline{\mathcal{G}}_k\cup\mathcal{E}_{\beta}^0)\cap\S}\e=\sup_{((0,\overline{\mathcal{G}}_k)\cup(0,\mathcal{E}_{\beta}^0))\cap(\R\times\S)}\tilde{\e}.
   \end{equation*}
   
   Applying Lemma \ref{a13}, we observe that $\{\ga([0,1]):\gamma\in\Gamma\}$ is a homotopy stable family of compact subset of $\R\times\S$ with close boundary $(0,\overline{\mathcal{G}}_k)\cup(0,\e^0)$ and the superlevel set $\{\tilde{\e}\ge\sigma(\rho,\beta)\}$ is a dual set for $\Gamma$. Using Lemma \ref{a13}, we can take any minimizing sequence $\{\gamma_n=(\alpha_n,\eta_n)\}\subset\Ga_n$ for $\sigma(\rho,\beta)$ with the property that $\alpha_n\equiv0$ and $\eta_n(\tau)\ge0$ a.e. in $\R^N$ for every $\tau\in [0,1]$, there exists a Palais-Smale sequence $\{(t_n,w_n)\}\subset\R\times\S$ for $\tilde{\e}|_{\R\times\S}$ at a level $\sigma(\rho,\beta)$, that is
   \begin{equation}\label{c41}
       \partial_t\tilde{\e}(t_n,w_n)\to 0\quad \text{and}\quad\|\partial_u\tilde{\e}(t_n,w_n)\|\to0~\text{as}~n\to\infty,
   \end{equation}
   with the property that 
   \begin{equation}\label{c44}
       |t_n|+\text{dist}_{\mathcal{X}}(w_n,\eta_n([0,1]))\to0~\text{as}~n\to\infty.
   \end{equation}
   Using \eqref{c40} and \eqref{c41}, we conclude that $P_{\beta}(t_n\star w_n)\to0,$ that is 
   \begin{equation}\label{c42}
       d\e(t_n\star w_n)(t_n\star\phi)=o_n(1)\|\phi\|=o_n(1)\|t_n\star\phi\|\quad\text{as}~n\to\infty,~\forall\phi\in\mathcal{Q}_{w_n}\S.
   \end{equation}
Let $u_n=t_n\star w_n$, then by \eqref{c42}, observe that $\{u_n\}$ is a Palais-Smale sequence for $\e|_{\S}$ at level $\sigma(\rho,\beta)=m_{\beta}(\rho)$ and $P_{\beta}(u_n)\to0$. Hence, by Lemma \ref{C33}, we derive that $m_{\beta}(\rho)\in(0,\frac{d}{N}\sr^{N/2d})$, hence by Proposition \ref{C32}, any one alternative occurs. Suppose alternative $(I)$ holds, then there exists $\tilde u$ such that $u_n\to\tilde{u}$ weakly in $\mathcal{X}$, but not strongly, where $\tilde u\ne 0$ is a solution of \eqref{eq:P} for some $\lambda<0$, and 
\begin{equation}\label{c43}
    \e(\tilde u)\le m_{\beta}(\rho)-\frac{d}{N}\sr^{N/2d}<0.
\end{equation}
By Pohozaev identity, $P_{\beta}(\tilde u)=0$, implies that $\e(\tilde u)=\frac{d}{N}\||x|^{-b}\tilde{u}\|_\t^\t>0$, which is a contradiction to \eqref{c43}. Thus, alternative $(II)$ holds, i.e. $u_n\to\tilde{u}$ strongly in $\mathcal{X}$, for some $\lambda<0$. By $\eta_n(\tau)\ge0$ a.e. in $\R^N$, \eqref{c44} and convergence implies that $u\ge0$, so by maximum principle, we have $u>0$.\QED
\section{Mass-Supercritical Perturbation}
This section proves Theorem~\ref{CC1} for the case $q>q_c$. 
In this mass-supercritical regime, the restricted functional $\mathcal{E}_\beta\big|_{S_\rho}$ 
exhibits supercritical geometry, requiring a specialized fibering analysis and 
yielding a distinct configuration of critical points.
\begin{lemma}
    The set  $\mathcal{M}_{\rho,\beta}^0$ is empty. Moreover, $\mathcal{M}_{\rho,\beta}$ forms a smooth manifold over \(\S\) with codimension $1$.
\end{lemma}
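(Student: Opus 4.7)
The plan is to exploit the sign reversal of the factor $2-q\delta_q$, which distinguishes the mass-supercritical regime $q>q_c$ from the previous cases. For the emptiness assertion I would argue by contradiction. Suppose some $u\in\mathcal{M}_{\rho,\beta}^0$ existed: then $P_\beta(u)=0$ and $(\Phi_{\beta}^u)''(0)=0$ both hold. Differentiating the explicit formula for $\Phi_{\beta}^u$ twice and evaluating at $t=0$, and then using the Pohozaev identity to eliminate the term $\||x|^{-a}\nabla u\|_2^2$, should yield
\[
\beta\delta_q(2-q\delta_q)\| |x|^{-b}u\|_q^q=(\t-2)\| |x|^{-b}u\|_\t^\t.
\]
Since $q>q_c$ is equivalent to $q\delta_q>2$, the left-hand side is non-positive while the right-hand side is non-negative, so both must vanish. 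In particular $\| |x|^{-b}u\|_\t=0$, which forces $u\equiv0$ because the weight $|x|^{-b}$ is strictly positive a.e.; this contradicts $u\in S_\rho$.

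For the manifold claim I would appeal to the implicit function theorem, using the fibering identity $P_\beta(t\star u)=(\Phi_{\beta}^u)'(t)$ that follows from the one-parameter group structure of the dilation $\star$. Since the dilation preserves the mass, its velocity at $t=0$ is a tangent vector to $S_\rho$, and differentiating this identity in $t$ gives
\[
\frac{d}{dt}\,P_\beta(t\star u)\Big|_{t=0}=(\Phi_{\beta}^u)''(0),
\]
which is nonzero at every $u\in\mathcal{M}_{\rho,\beta}$ precisely because $\mathcal{M}_{\rho,\beta}^0=\emptyset$. Hence $P_\beta|_{S_\rho}$ is a $C^1$ submersion at each point of $\mathcal{M}_{\rho,\beta}$, so $0$ is a regular value and $\mathcal{M}_{\rho,\beta}=(P_\beta|_{S_\rho})^{-1}(0)$ is a $C^1$ submanifold of $S_\rho$ of codimension one.

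No substantial analytical obstacle is expected here; the argument is essentially algebraic once the supercritical sign bookkeeping is handled correctly. The structure parallels that of Lemma~\ref{C10} in the mass-critical case, the main new ingredient being that it is now the strict inequality $q\delta_q>2$, rather than any smallness assumption on $\beta$, which is used to preclude degenerate points of $\mathcal{M}_{\rho,\beta}$.
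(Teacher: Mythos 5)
Your proposal is correct and follows essentially the same route as the paper: the contradiction argument combines $P_\beta(u)=0$ with $(\Phi_\beta^u)''(0)=0$ to obtain $\beta\delta_q(2-q\delta_q)\||x|^{-b}u\|_q^q=(\t-2)\||x|^{-b}u\|_\t^\t$ and then uses the sign condition $q\delta_q>2$ to force $u\equiv 0$, exactly as in the paper. The manifold claim via the implicit function theorem and the regular-value argument is also the same mechanism the paper invokes (by reference to its earlier Lemma on the fibering map), merely spelled out in more detail.
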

\begin{proof}
    Suppose by contradiction that there exists a $u\in\mathcal{M}_{\rho,\beta}^0$. Then
    \begin{equation*}
        \| |x|^{-a}\na u\|_2^2=\beta\delta_q\||x|^{-b}u\|_q^q+\||x|^{-b}u\|_\t^\t,
    \end{equation*}
    and
     \begin{equation*}
        2\| |x|^{-a}\na u\|_2^2=\beta q\delta_q^2\||x|^{-b}u\|_q^q+\t\||x|^{-b}u\|_\t^\t,
    \end{equation*}
    from which, we get 
    \begin{equation*}
        (2-q\delta_q)\beta\delta_q\p\|_q^q=(\t-2)\p\|_\t^\t.
    \end{equation*}
    Since $2-q\delta_q<0$ and $\t-2>0$, we see that $u=0$, which is not possible since $u\in\S.$ The remainder of the proof can be done by using the same assertion as in Lemma \ref{C12}, so we omit the details here. \QED
    \end{proof}
    \begin{lemma}\label{C15}
    Let $u\in S_{\rho}$, then there exists a unique $t_*\in\R$ such that $t_*\star u\in \mathcal{M}_{\rho,\beta}$, where $t_*$ is the unique critical point of $\f$ and is a strict maximum point at a positive level. Moreover
    \begin{itemize}
        \item[(i)] $\mathcal{M}_{\rho,\beta}=\mathcal{M}_{\rho,\beta}^-.$
        \item[(ii)] $\f$ is strictly decreasing and concave on $(t_*,+\infty),$ and $t_*<0$ implies $P_\beta(u)<0.$
        \item[(iii)] The map $u\in S_{\rho}\mapsto t_*\in \R$ is of class $C^1.$
        \item[(iv)] If $P_\beta(u)<0,$ then $t_*<0.$
    \end{itemize}
\end{lemma}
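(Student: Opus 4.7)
The plan is to carry out a classical fiber-map analysis adapted to the mass-supercritical regime, in which the three exponents appearing in $\Phi^u_\beta$ satisfy $2<q\delta_q<\t$. Recall
\[
\Phi^u_\beta(t)=\frac{e^{2t}}{2}\||x|^{-a}\nabla u\|_2^2-\frac{\beta e^{q\delta_q t}}{q}\||x|^{-b}u\|_q^q-\frac{e^{\t t}}{\t}\||x|^{-b}u\|_\t^\t,
\]
so $(\Phi^u_\beta)'(0)=P_\beta(u)$ and critical points of $\Phi^u_\beta$ correspond, via Corollary \ref{CA1}, to elements of $\mathcal{M}_{\rho,\beta}$ on the fiber of $u$. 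To extract existence and uniqueness of $t_*$, I would introduce $g(t):=e^{-2t}(\Phi^u_\beta)'(t)$, which is a sum of two strictly decreasing exponentials (since $q\delta_q-2>0$ and $\t-2>0$) offset by the positive constant $\||x|^{-a}\nabla u\|_2^2$; hence $g$ is strictly decreasing with $g(-\infty)=\||x|^{-a}\nabla u\|_2^2>0$ and $g(+\infty)=-\infty$, yielding a unique zero $t_*$. Combined with $\Phi^u_\beta(t)\to 0^+$ as $t\to-\infty$ and $\Phi^u_\beta$ increasing on $(-\infty,t_*)$, this forces $\Phi^u_\beta(t_*)>0$, so $t_*$ is the unique critical point and a strict maximum at a positive level.

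For claim (i), Corollary \ref{CA1} yields $t_*\star u\in\mathcal{M}_{\rho,\beta}$; strict maximality gives $(\Phi^u_\beta)''(t_*)\le 0$, and combined with $\mathcal{M}_{\rho,\beta}^0=\emptyset$ this forces strict inequality, so $t_*\star u\in\mathcal{M}_{\rho,\beta}^-$, establishing $\mathcal{M}_{\rho,\beta}=\mathcal{M}_{\rho,\beta}^-$. Strict decrease on $(t_*,+\infty)$ in (ii) is immediate from $g<0$ there. For concavity on the same interval I would exploit $(\Phi^u_\beta)'(t)<0$ to get
\[
2e^{2t}\||x|^{-a}\nabla u\|_2^2<2\beta\delta_q e^{q\delta_q t}\||x|^{-b}u\|_q^q+2e^{\t t}\||x|^{-b}u\|_\t^\t,
\]
and substitute this upper bound for the $2e^{2t}\||x|^{-a}\nabla u\|_2^2$ term in $(\Phi^u_\beta)''(t)$ to obtain
\[
(\Phi^u_\beta)''(t)<\beta\delta_q(2-q\delta_q)e^{q\delta_q t}\||x|^{-b}u\|_q^q+(2-\t)e^{\t t}\||x|^{-b}u\|_\t^\t<0,
\]
precisely because $q\delta_q>2$ and $\t>2$. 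The remaining clause of (ii) and the statement (iv) both express the shape of $\Phi^u_\beta$: $P_\beta(u)=(\Phi^u_\beta)'(0)<0$ iff $0$ lies in the strictly decreasing part $(t_*,+\infty)$, iff $t_*<0$. For (iii), I would apply the implicit function theorem to $(t,u)\mapsto(\Phi^u_\beta)'(t)$ on $\mathbb{R}\times S_\rho$, using $(\Phi^u_\beta)''(t_*)<0$ (nonvanishing thanks to $\mathcal{M}_{\rho,\beta}^0=\emptyset$), exactly as in Lemma \ref{C12}.

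The main obstacle is the concavity part of (ii), since the expression for $(\Phi^u_\beta)''(t)$ has no definite sign from inspection of coefficients alone (one positive, two negative, with three different exponential rates). The decisive idea is the substitution trick above: using $(\Phi^u_\beta)'(t)<0$ on $(t_*,+\infty)$ to trade the positive $2e^{2t}$-term for a combination of the other two terms, after which the sign conditions $q\delta_q-2>0$ and $\t-2>0$ (characteristic of the mass-supercritical regime) close the estimate. Everything else reduces to the standard fiber-map argument already used in the mass-critical case, with the essential simplification that here the strict monotonicity of $g$ produces a single, rather than a pair, of critical points.
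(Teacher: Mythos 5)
Your proof is correct and follows essentially the same route as the paper: the strictly decreasing function $g(t)=e^{-2t}(\Phi^u_\beta)'(t)$ you introduce is just the constant-minus-increasing reformulation of the paper's function $h(t)$, and your treatment of (i), (iii), (iv) via $\mathcal{M}^0_{\rho,\beta}=\emptyset$, the sign of $(\Phi^u_\beta)'$, and the implicit function theorem matches the paper's argument. The one point where you go beyond the paper is the concavity clause of (ii), which the paper asserts without justification; your substitution of the inequality $(\Phi^u_\beta)'(t)<0$ into $(\Phi^u_\beta)''(t)$, using $q\delta_q>2$ and $\t>2$, is a correct and clean way to supply that missing verification.
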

\begin{proof}
A direct computation yields
\begin{equation*}
    (\f)'(t)=e^{2t}\||x|^{-a}\na u\|_2^2-\beta \delta_qe^{q\delta_qt}\||x|^{-b}u\|_q^q-e^{\t t}\||x|^{-b}u\|_{\t}^{\t}.
\end{equation*}
Hence, $(\f)'(t)=0$ if and only if
\begin{equation*}
    \||x|^{-a}\na u\|_2^2=\beta \delta_qe^{(q\delta_q-2)t}\||x|^{-b}u\|_q^q+e^{(\t-2) t}\||x|^{-b}u\|_{\t}^{\t}:=h(t).
\end{equation*}
Observe that $h(t)$ is positive, continuous, and increasing on $\R$, and satisfies
$h(t)\to +\infty$ as $t\to +\infty$.
  Consequently, there exists a unique point $t_*$ such that $t_*\star u\in \mathcal{M}_{\rho,\beta}$, where $t_*$ is a unique critical point of $\f(t)$ and corresponds to a strict maximum at the positive level. In particular, $(\f)'(t_*)=0 $ and $(\f)''\le 0$. Since $\mathcal{M}_{\rho,\beta}^0=\emptyset$, we have $(\f)''(t_*)\ne 0,$ this implies that $t_*\star u\in \mathcal{M}_{\rho,\beta}^-$ and $\mathcal{M}_{\rho,\beta}=\mathcal{M}_{\rho,\beta}^-$, as $\f(t)$ has exactly one maximum point. To prove $(iii)$, we apply the implicit function theorem as in Lemma \ref{C10}. Since $(\Phi_\beta^u)'(t)<0$ if and only if $t>t_*$, we conclude that \(
P_\beta(u)=(\Phi_\beta^u)'(0)<0\)
if and only if \(t_*<0\).
\QED
\end{proof}
\begin{lemma}\label{C16}
    $m_{\beta}(\rho)=\displaystyle\inf_{\mathcal{M}_{\rho,\beta}}\e>0$.
\end{lemma}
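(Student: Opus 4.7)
\textbf{Proof sketch for Lemma \ref{C16}.} The strategy is to exploit the Pohozaev-type identity $P_\beta(u)=0$ defining $\mathcal{M}_{\rho,\beta}$ in two complementary ways: first, to produce a uniform lower bound on $\||x|^{-a}\nabla u\|_2$ via the embeddings \eqref{c1} and $\mathcal{S}(a,b)$, and second, to rewrite $\mathcal{E}_\beta$ on the constraint as a sum of two positive terms by eliminating the sign-indefinite $L^q$-contribution. The mass-supercritical condition $q>q_c$, which amounts to $q\delta_q>2$, is precisely what turns both steps into positivity statements.

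\emph{Step 1: lower bound on the gradient norm.} For $u\in\mathcal{M}_{\rho,\beta}$ I would combine the constraint
\[
\||x|^{-a}\nabla u\|_2^2=\beta\delta_q\||x|^{-b}u\|_q^q+\||x|^{-b}u\|_{2^\sharp}^{2^\sharp}
\]
with \eqref{c1} and the definition of $\mathcal{S}(a,b)$ to obtain
\[
\||x|^{-a}\nabla u\|_2^2\le \beta\delta_q\,C_{a,b}^q\rho^{(1-\delta_q)q}\,\||x|^{-a}\nabla u\|_2^{q\delta_q}+\mathcal{S}(a,b)^{-2^\sharp/2}\||x|^{-a}\nabla u\|_2^{2^\sharp}.
\]
Dividing by $\||x|^{-a}\nabla u\|_2^2$ and using $q\delta_q-2>0$ and $2^\sharp-2>0$, this inequality forces a uniform bound $\||x|^{-a}\nabla u\|_2\ge C_0>0$ depending only on $N,\,a,\,b,\,q,\,\beta,\,\rho$.

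\emph{Step 2: rewriting the energy in ``positive'' form.} On $\mathcal{M}_{\rho,\beta}$, I would compute $\mathcal{E}_\beta(u)-\tfrac{1}{q\delta_q}P_\beta(u)$; the coefficient of $\||x|^{-b}u\|_q^q$ cancels exactly, leaving
\[
\mathcal{E}_\beta(u)=\left(\frac{1}{2}-\frac{1}{q\delta_q}\right)\||x|^{-a}\nabla u\|_2^2+\left(\frac{1}{q\delta_q}-\frac{1}{2^\sharp}\right)\||x|^{-b}u\|_{2^\sharp}^{2^\sharp}.
\]
A short computation using $\delta_q=\bigl((N-2a+2b)q-2N\bigr)/(2q)$ shows that $q\mapsto q\delta_q$ is increasing and equals $2^\sharp$ at $q=2^\sharp$, so $q_c<q<2^\sharp$ yields $2<q\delta_q<2^\sharp$, making both coefficients strictly positive. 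Combining this with the lower bound from Step 1 gives
\[
\mathcal{E}_\beta(u)\ge\left(\frac{1}{2}-\frac{1}{q\delta_q}\right)C_0^{\,2}>0,
\]
uniformly in $u\in\mathcal{M}_{\rho,\beta}$, and passing to the infimum yields $m_\beta(\rho)>0$.

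There is no serious obstacle: the argument is entirely algebraic once the splitting $\mathcal{E}_\beta=\mathcal{E}_\beta-\tfrac{1}{q\delta_q}P_\beta$ is identified. The only point requiring care is verifying the inequality $q\delta_q<2^\sharp$ so that the critical-term coefficient has the right sign; the relation $q\delta_q<2$ would instead correspond to the mass-subcritical regime already handled in Section 4, and the genuine new input of the mass-supercritical case is precisely the combination $2<q\delta_q<2^\sharp$.
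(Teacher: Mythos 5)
Your proposal is correct and follows essentially the same two-step strategy as the paper: use $P_\beta(u)=0$ together with \eqref{c1} and the definition of $\mathcal{S}(a,b)$ to obtain a uniform positive lower bound on $\||x|^{-a}\nabla u\|_2$, then rewrite $\mathcal{E}_\beta$ on $\mathcal{M}_{\rho,\beta}$ as a positive combination by subtracting a multiple of $P_\beta$. The only (immaterial) difference is that you subtract $\tfrac{1}{q\delta_q}P_\beta$ to cancel the $L^q$-term and conclude directly from the gradient bound, whereas the paper subtracts $\tfrac{1}{2}P_\beta$ to cancel the gradient term and then bounds the remaining combination $\beta\bigl(\tfrac{\delta_q}{2}-\tfrac1q\bigr)\||x|^{-b}u\|_q^q+\tfrac{d}{N}\||x|^{-b}u\|_{2^\sharp}^{2^\sharp}$ from below via $\beta\delta_q\||x|^{-b}u\|_q^q+\||x|^{-b}u\|_{2^\sharp}^{2^\sharp}=\||x|^{-a}\nabla u\|_2^2$.
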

\begin{proof}
Let $u\in \mathcal{M}_{\rho,\beta}$, then by the definition of $\sr$ and inequality \eqref{c1}, we have
    \begin{align*}
        \||x|^{-a}\na u\|_2^2=&\beta\delta_q\||x|^{-b}u\|_q^q+\||x|^{-b}u\|_{\t}^{\t}\\
        &\le\beta\delta_qC^q_{a,b}\rho^{(1-\delta_q)q}\n\|_2^{q\delta_q}+\sr^{-\frac{\t}{2}}\n\|_2^\t.
    \end{align*}
    Hence, by the above inequality and the fact that $u\in\S$, we deduce
    \begin{equation*}
      \beta\delta_qC^q_{a,b}\rho^{(1-\delta_q)q}\n\|_2^{q\delta_q-2}+\sr^{-\frac{\t}{2}}\n\|_2^{\t-2}\ge 1,\quad \forall u\in\mathcal{M}_{\rho,\beta}.  
    \end{equation*}
    This implies that $\displaystyle\inf_{u\in\mathcal{M}_{\rho,\beta}}\n\|_2^2>0$ and 
    \begin{equation*}
        \inf_{u\in\mathcal{M}_{\rho,\beta}}\left(\beta\delta_q\||x|^{-b}u\|_q^q+\||x|^{-b}u\|_{\t}^{\t}\right)>0.
    \end{equation*}
    By $P_{\beta}(u)=0$ and the last inequality, we get
    \begin{align*}
        \inf_{u\in\mathcal{M}_{\rho,\beta}}\e(u)&=\inf_{u\in\mathcal{M}_{\rho,\beta}}\left( \frac{1}{2}\||x|^{-a}\na u\|_2^2-\frac{\beta}{q}\||x|^{-b}u\|_q^q-\frac{1}{\t}\||x|^{-b}u\|_{\t}^{\t}\right)\\
       & =\inf_{u\in\mathcal{M}_{\rho,\beta}}\left(\beta\left(\frac{\delta_q}{2}-\frac{1}{q}\right)\||x|^{-b}u_n|_q^q+\frac{d}{N}\||x|^{-b}u_n\|_{\t}^{\t}\right)>0.
    \end{align*}
    This completes the proof.\QED
    \end{proof}
    \begin{lemma}
        For $k>0$ sufficiently small, we have $0<\displaystyle\sup_{\overline{\mathcal{G}}_k}\e<m_{\beta}(\rho)$. Moreover, if $u\in \overline{\mathcal{G}}_k$ then $\e(u),~P_{\beta}(u)>0,$
        where $\mathcal{G}_k=\{u\in\S:\n\|_2^2<k\}$.
    \end{lemma}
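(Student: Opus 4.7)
The argument follows the pattern of Lemma~\ref{C33} from the mass-critical regime, but now exploits the supercritical relation $q\delta_q>2$. My plan is to apply the Caffarelli--Kohn--Nirenberg inequality \eqref{c1} together with the definition of $\sr$ to dominate each nonlinear term by a power of $\n\|_2^2$ whose exponent is strictly greater than $2$; the quadratic leading term then controls everything on a small neighbourhood of the ``small gradient'' set, and a crude upper bound reconciles the energy with the positive mountain-pass level supplied by Lemma~\ref{C16}.

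First, for any $u\in S_\rho$ I would combine \eqref{c1} with the Sobolev-type inequality coming from $\sr$ to obtain
\begin{align*}
\e(u)&\ge \frac{1}{2}\n\|_2^2-\frac{\beta\C\rho^{(1-\delta_q)q}}{q}\n\|_2^{q\delta_q}-\frac{1}{\t\sr^{\t/2}}\n\|_2^{\t},\\
P_{\beta}(u)&\ge \n\|_2^2-\beta\delta_q\C\rho^{(1-\delta_q)q}\n\|_2^{q\delta_q}-\sr^{-\t/2}\n\|_2^{\t}.
\end{align*}
Because $q>q_c$ forces $q\delta_q>2$ while $\t>2$ always, each right-hand side factors as $\n\|_2^2$ times a quantity tending to $1$ as $\n\|_2\to 0^+$. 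Hence there exists $k_1>0$ such that both $\e(u)>0$ and $P_{\beta}(u)>0$ whenever $\n\|_2^2\le k_1$; this immediately gives the pointwise positivity on $\overline{\mathcal{G}}_k$ for every $0<k\le k_1$, and a fortiori $\sup_{\overline{\mathcal{G}}_k}\e>0$.

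For the strict separation from $m_\beta(\rho)$, I would simply discard the (non-negative) nonlinear subtractions in $\e$ to obtain the crude upper estimate $\e(u)\le \frac{1}{2}\n\|_2^2\le \frac{k}{2}$ for every $u\in\overline{\mathcal{G}}_k$. Lemma~\ref{C16} guarantees $m_\beta(\rho)>0$, so by choosing $k<\min\{k_1,\,2m_\beta(\rho)\}$ one gets $\sup_{\overline{\mathcal{G}}_k}\e\le \frac{k}{2}<m_\beta(\rho)$, which finishes the argument.

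There is no serious obstruction here; this is a standard small-norm coercivity estimate. The only subtlety is the bookkeeping of exponents: both $q\delta_q$ and $\t$ must strictly exceed $2$, and it is precisely this feature that distinguishes the supercritical regime from the subcritical one treated in Section~4, where $q\delta_q<2$ reverses the sign balance of the subleading contribution and makes such a direct positivity argument impossible.
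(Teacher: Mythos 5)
Your proposal is correct and follows essentially the same route as the paper: bound both $\e(u)$ and $P_{\beta}(u)$ from below via \eqref{c1} and the definition of $\sr$, use $q\delta_q>2$ and $\t>2$ to get positivity for small $\n\|_2$, and then combine the crude bound $\e(u)\le\frac{1}{2}\n\|_2^2$ with the positivity of $m_{\beta}(\rho)$ from Lemma~\ref{C16}. The only difference is cosmetic (your explicit choice $k<\min\{k_1,2m_{\beta}(\rho)\}$ versus the paper's ``replacing $k$ with a smaller quantity'').
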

    \begin{proof}
        Using the definition of $\sr$ and inequality \eqref{c1}, we get
        \begin{equation*}
            \e(u)\le\frac{1}{2}\n\|_2^2-\frac{\beta}{q}C^q_{a,b}\rho^{q(1-\delta_q)}\n\|_2^{q\delta_q}-\frac{1}{\t}\sr^{-\frac{\t}{2}}\n\|_2^\t>0,
        \end{equation*}
        and \begin{align*}
            P_{\beta}(u)&=\n\|_2^2-\beta\delta_q\p\|_q^q-\p\|_\t^\t\\
            &\ge\n\|_2^2-\beta\delta_qC^q_{a,b}\rho^{q(1-\delta_q)}\n\|_2^{q\delta_q}-\sr^{-\frac{\t}{2}}\n\|_2^\t>0,
        \end{align*}
        if $u\in\overline{\mathcal{G}}_k$ for $k$ small enough. By Lemma \ref{C16}, we obtain $m_{\beta}(\rho)>0,$ thus if required replacing $k$ with smaller quantity, we also have 
        \begin{equation*}
            \e(u)\le\frac{1}{2}\n\|_2^2<m_{\beta}(\rho).
        \end{equation*}
        This completes the proof.\QED
    \end{proof}
    \begin{lemma}\label{C41}
        For \(\beta<\beta^*\), we have $m_{\beta}(\rho)<\frac{d}{N}\sr^{N/2d}.$
    \end{lemma}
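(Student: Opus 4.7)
I would follow the strategy of Lemma~\ref{C17}, adapting it to the supercritical regime $q\delta_q>2$. Set $\ve:=\rho\,\ue/\||x|^{-a}\ue\|_2\in\S$, where $\ue$ is the truncated CKN extremal analysed in Proposition~\ref{C79}. Lemma~\ref{C15} then ensures that $\fom$ possesses a unique critical point $\te$, which is a strict global maximum, and that $\te\star\ve\in\M$. Consequently
\[
m_{\beta}(\rho)\le\e(\te\star\ve)=\sup_{t\in\R}\fom(t),
\]
so it suffices to exhibit some $\var>0$ for which $\sup_{t\in\R}\fom(t)<\tfrac{d}{N}\sr^{N/2d}$.

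The key decomposition is $\fom(t)=\fo(t)-\tfrac{\beta}{q}e^{q\delta_q t}\||x|^{-b}\ve\|_q^q$. Evaluating at $\te$ and using $\fo(\te)\le\sup_{\R}\fo$ produces
\[
\sup_{\R}\fom\le\sup_{\R}\fo\;-\;\tfrac{\beta}{q}\,e^{q\delta_q\te}\,\||x|^{-b}\ve\|_q^q.
\]
A direct computation (as in Lemma~\ref{C17}) gives $\sup_{\R}\fo=\tfrac{d}{N}\bigl(\||x|^{-a}\na\ve\|_2^2/\||x|^{-b}\ve\|_\t^2\bigr)^{\t/(\t-2)}$, and the test-function estimates of Proposition~\ref{C79} then expand this as $\tfrac{d}{N}\sr^{N/2d}+O(\var^{(N-2d)/(2d)})$.

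To close the argument, I would bound $e^{q\delta_q\te}\||x|^{-b}\ve\|_q^q$ from below. From the Pohozaev identity $P_{\beta}(\te\star\ve)=0$ one obtains
\[
\||x|^{-a}\na\ve\|_2^2=\beta\delta_q\,e^{(q\delta_q-2)\te}\,\||x|^{-b}\ve\|_q^q+e^{(\t-2)\te}\,\||x|^{-b}\ve\|_\t^\t.
\]
Because both exponents $q\delta_q-2$ and $\t-2$ are positive, the right-hand side is strictly increasing in $\te$; combining this monotonicity with the CKN inequality \eqref{c1} and the restriction $\beta<\beta^*$ (which keeps the subcritical contribution below a threshold calibrated to $\sr$ and $\rho$) yields a uniform lower bound $e^{\te}\ge c>0$ as $\var\to 0$. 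The negative correction then inherits, up to a positive multiplicative constant, the $\var$-behaviour of $\||x|^{-b}\ue\|_q^q/\||x|^{-a}\ue\|_2^{q}$.

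The hard part will be the final comparison: the subcritical correction must decay strictly slower than $\var^{(N-2d)/(2d)}$, and this is a delicate parameter-dependent statement. This is precisely where the hypotheses of Theorem~\ref{CC1} enter. In the generic case, the restriction $0<a<\max\{(N-4)/2,0\}$ guarantees the correct asymptotic ordering, whereas in the borderline regime $N=3$ with $\tfrac{10}{3-2a+2b}\le q<\tfrac{3}{1-2a+b}$, the supplementary assumption $b-a>\tfrac{1}{6}$ is needed for the relevant decay rate of $\||x|^{-b}\ue\|_q^q$ to dominate $\var^{(N-2d)/(2d)}$. Granted these comparisons, supplied by the refined asymptotics of Proposition~\ref{C79} in the Appendix, one concludes $\sup_{\R}\fom<\tfrac{d}{N}\sr^{N/2d}$ for all sufficiently small $\var$, which completes the proof.
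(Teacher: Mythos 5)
Your overall architecture coincides with the paper's: project $\ve=\rho\,\ue/\||x|^{-a}\ue\|_2$ onto $\M$ via Lemma~\ref{C15}, split $\fom(t)=\fo(t)-\frac{\beta}{q}e^{q\delta_q t}\||x|^{-b}\ve\|_q^q$, expand $\sup_{\R}\fo=\frac{d}{N}\sr^{N/2d}+O(\var^{\frac{N-2d}{2d}})$, and show that the subcritical correction beats the error term. However, there is a genuine error at the technical heart of the argument: the claimed uniform lower bound $e^{\te}\ge c>0$ as $\var\to0$ is false. Dropping the positive subcritical term in the stationarity equation
\[
\||x|^{-a}\na \ve\|_2^2=\beta\delta_q e^{(q\delta_q-2)\te}\||x|^{-b}\ve\|_q^q+e^{(\t-2)\te}\||x|^{-b}\ve\|_\t^\t
\]
gives
\[
e^{(\t-2)\te}\le\frac{\||x|^{-a}\na \ve\|_2^2}{\||x|^{-b}\ve\|_\t^\t}
=\frac{\||x|^{-a}\na \ue\|_2^2\,\||x|^{-a}\ue\|_2^{\t-2}}{\rho^{\t-2}\,\||x|^{-b}\ue\|_\t^\t}
=O\bigl(\||x|^{-a}\ue\|_2^{\t-2}\bigr),
\]
which tends to $0$ by Proposition~\ref{C11}\textit{(iv)}; hence $e^{\te}\to0$. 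The monotonicity of the right-hand side in $t$ cannot produce a uniform bound, because the coefficients $\||x|^{-a}\na\ve\|_2^2$ and $\||x|^{-b}\ve\|_\t^\t$ themselves blow up as $\var\to0$ after the mass normalization.

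This is not merely cosmetic: the size of the negative correction depends on the rate at which $e^{\te}$ degenerates. The paper's proof first records the upper bound above and then feeds it back into the stationarity equation (legitimately, because $q\delta_q-2>0$) to obtain the two-sided estimate $e^{(\t-2)\te}\ge C\,\||x|^{-a}\ue\|_2^{\t-2}/\rho^{\t-2}$ in \eqref{c24}; with this, the correction is bounded below by a constant times $\rho^{(1-\delta_q)q}\,\||x|^{-b}\ue\|_q^q/\||x|^{-a}\ue\|_2^{(1-\delta_q)q}$, not by $\||x|^{-b}\ue\|_q^q/\||x|^{-a}\ue\|_2^{q}$ as you assert. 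Since $\||x|^{-a}\ue\|_2\to0$ and $(1-\delta_q)q<q$, your quantity is strictly larger than the true one, so the ``delicate final comparison'' with $\var^{\frac{N-2d}{2d}}$ must be run against the smaller ratio appearing in \eqref{c23}--\eqref{c25}; your version makes that comparison look easier than it is and could validate parameter ranges where the argument actually fails. The repair is the paper's two-step bootstrap on $\te$, together with the verification (using the Appendix estimates and $\beta<\beta^*$, plus $b-a>\tfrac16$ when $N=3$) that the subtracted term in the lower bound for $e^{(\t-2)\te}$ is of lower order. As a minor point, the expansion of $\sup_{\R}\fo$ comes from Proposition~\ref{C11}\textit{(i)}--\textit{(ii)}, not from Proposition~\ref{C79}.
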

    \begin{proof}
        By Lemma \ref{C15}, we get
        \begin{equation*}
           m_{\beta}(\rho)=\inf_{\mathcal{M}_{\rho,\beta}}\e\le \e(t_{\beta}^{\var}\star\ve)=\max_{t\in\R}\e(t\star\ve).
        \end{equation*}
  By a similar argument as in the step $1$ of Lemma \ref{C17}, we obtain
  \begin{equation*}
      \fo(t_{0}^{\var})\le \frac{d}{N}\sr^{N/2d}+O(\var^{\frac{N-2d}{2d}}).
  \end{equation*}
  Let $\te$ be the maximum point of $\f$. Then
  \begin{align*}
      \fom(t)=\e(t\star\ve)=\frac{e^{2t}}{2}\||x|^{-a}\na \ve\|_2^2-\frac{\beta e^{q\delta_qt}}{q}\||x|^{-b}\ve\|_q^q-\frac{e^{\t t}}{\t}\||x|^{-b}\ve\|_{\t}^{\t}.
  \end{align*}
  Employing the fact that $(\fom)'(\te)=P_{\beta}(\te\star\ve)=0$, we get
  \begin{align*}
      e^{\t \te}\||x|^{-b}\ve\|_{\t}^{\t}=e^{2\te}\||x|^{-a}\na \ve\|^2_2-\beta\delta_qe^{q\delta_q}\||x|^{-b}\ve\|_q^{q}\le e^{2\te}\||x|^{-a}\na \ve\|^2_2.
  \end{align*}
  So, this gives an estimates on $\te$,
  \begin{equation}\label{c21}
      e^{2\te}\le\left(\frac{\||x|^{-a}\na \ve\|^2_2}{\||x|^{-b}\ve\|_\t^\t}\right)^{\frac{1}{\t-2}}.
  \end{equation}
  Using \eqref{c21} and $\ve=\rho\frac{\ue}{\||x|^{-a}u_{\var}\|_2}$, we deduce that
  \begin{align*}
    \e^{(\t-2)\te}&=\frac{\||x|^{-a}\na \ve\|^2_2}{\||x|^{-b}\ve\|_\t^\t}-\beta\delta_qe^{(q\delta_q-2)\te}\frac{\||x|^{-b} \ve\|^q_q}{\||x|^{-b}\ve\|_\t^\t}\\
      &\ge \frac{\||x|^{-a}\na \ve\|^2_2}{\||x|^{-b}\ve\|_\t^\t}-\beta\delta_q\left(\frac{\||x|^{-a}\na \ve\|^2_2}{\||x|^{-b}\ve\|_\t^\t}\right)^{\frac{q\delta_q-2}{\t-2}}\frac{\||x|^{-b} \ve\|^q_q}{\||x|^{-b}\ve\|_\t^\t}\\
     &= \frac{\||x|^{-a}\na \ue\|^2_2\||x|^{-a}\ue\|_2^{\t-2}}{\||x|^{-b}\ue\|_\t^\t\rho^{\t-2}}-\beta\delta_q\left(\frac{\||x|^{-a}\na \ue\|^2_2\||x|^{-a}\ue\|_2^{\t-2}}{\||x|^{-b}\ue\|_\t^\t\rho^{\t-2}}\right)^{\frac{q\delta_q-2}{\t-2}}\frac{\||x|^{-b} \ue\|^q_q\||x|^{-a}\ue\|_2^{\t-q}}{\||x|^{-b}\ue\|_\t^\t\rho^{\t-q}}\\
    &=\frac{\||x|^{-a}\na \ue\|^{\frac{q\delta_q-2}{\t-2}}\||x|^{-a}\ue\|_2^{\t-2}}{\||x|^{-b}\ue\|_\t^\t\rho^{\t-2}}\left(\n_{\var}\|_2^{2\frac{\t-q\delta_q}{\t-2}}-\beta\delta_q\frac{\rho^{(1-\delta_q)q}\p_{\var}\|_q^q}{\||x|^{-a}\ue\|_2^{(1-\delta_q)q}\p_{\var}\|_\t^{\t\frac{q\delta_q-2}{\t-2}}}\right).
  \end{align*}
  By the estimates in Lemma \ref{C11}, there exist constants $C_1,C_2,C_3>0$ depending only on $N,q,a,b$ such that
  \begin{equation*}
      \n_{\var}\|_2^{2\frac{\t-q\delta_q}{\t-2}}\ge C_1,
 \quad
      C_2\le \p_{\var}\|_\t^{\t\frac{q\delta_q-2}{\t-2}}\le \frac{1}{C_2}.
  \end{equation*}
  Moreover, we have the following estimates

 \medskip\noindent 
 \textbf{Case 1:}$\frac{N}{N-2(1+a)+b}<q_c<q<\t$. 
 
  \begin{equation}\label{c23}
      \frac{\||x|^{-b}\ue\|_q^q}{\||x|^{-a}\ue\|_\t^{(1-\delta_q)q}}=
      \begin{cases}
          C_3,&\text{if}~0<a<\max\{\frac{N-4}{2},0\},\\
          C_3|\log\var|^{\frac{(\delta_q-1)q}{2}},&\text{if}~0<a=\frac{N-4}{2},\\
          C_3\var^{\frac{(N-2d)(2N-Nq+2qd)(4+2a-N)}{8d(N-2-2a)}},&\text{if}~\max\{\frac{N-4}{2},0\}<a<{\frac{N-2}{2}
          }.
      \end{cases}
  \end{equation}Then,
  
  \begin{equation}\label{c24}
      e^{(\t-2)\te}\ge C\frac{\||x|^{-a}\ue\|_2^{\t-2}}{\rho^{\t-2}},
  \end{equation}
 for some $C:=C(a,b,q,N)>0$, with the condition that \(\beta<\frac{\sr^{\frac{N(2-q\delta_q)}{2d}}}{\C\rho^{(1-\delta_q)q}\delta_q}\), for \(0<a<\max\{\frac{N-4}{2},0\}\).
 
 \medskip\noindent
 \textbf{Case 2:} $q_c<q\le\frac{N}{N-2(1+a)+b}$.

 This case occurs when \(\max\{\frac{N-4}{2},0\}<a<\frac{N-2}{2}\). In this regime, we have
  \begin{equation}\label{c25}
 \frac{\||x|^{-b}\ue\|_q^q}{\||x|^{-a}\ue\|_\t^{(1-\delta_q)q}}=
 \begin{cases}

 C_3\var^{\frac{(N-2d)(q(N-2d)+2(1-N))}{8d}}|\log\var|,&\text{if}~q_c<q=\frac{N}{N-2(1+a)-b},\\
 C_3\var^{\frac{(N-2d)(q(N-2d)+2(1-N))}{8d}},&\text{if}~q_c<q<\frac{N}{N-2(1+a)-b},
 \end{cases}
 \end{equation}
 with these estimates, in this case, we also obtain \eqref{c24}.
 
  Note that
  \begin{align*}
     \max_{t\in\R}\fom(t)&=\fom(\te)=\fo(\te)-\beta\frac{e^{q\delta_q\te}}{q}\||x|^{-b}\ve\|_q^q\\
     &\le \sup_{\R}\fo-\beta\frac{C\||x|^{-a}\ue\|_2^{q\delta_q}\rho^{q}\p_{\var}\|_q^q}{q\rho^{q\delta_q}\||x|^{-a}\ue\|_2^q}\\
     &=\sup_{\R}\fo-\beta\frac{C\rho^{(1-\delta_q)q}\p_{\var}\|_q^q}{q\||x|^{-a}\ue\|_2^{(1-\delta_q)q}}\\
     &\le\frac{d}{N}\sr^{N/2d}+O(\var^{\frac{N-2d}{2d}})-\beta\frac{C\rho^{(1-\delta_q)q}\p_{\var}\|_q^q}{q\||x|^{-a}\ue\|_2^{(1-\delta_q)q}}.
  \end{align*}
  Similarly using \eqref{c23} and \eqref{c25}, we can derive that
  \begin{equation*}
      m_{\beta}(\rho)=\inf_{\M}\e\le\max_{\te}\fom(t)<\frac{d}{N}\sr,
  \end{equation*}
  for $\var>0$ small enough, which is the desired result.\QED
    \end{proof}
      \textbf{Proof of Theorem \ref{CC1} (Mass-supercritical case).}
Proceeding with assertion as in case $q=q_c$, we obtain a Palais-Smale sequence $\{u_n\}\subset \S$ for $\e|_{\S}$ at a level $\sigma(a,\beta)=m_{\beta}(\rho)$ and $P_{\beta}(u_n)\to0$. Therefore, by Lemma \ref{C41}, we have $m_{\beta}(\rho)\in(0,\frac{d}{N}\sr^{N/2d})$, so by Proposition \ref{C32}, one of the alternative occurs.

Suppose $(I)$ of Proposition \ref{C32} holds, then up to a subsequence $u_n\rightharpoonup \tilde{u}$ weakly in $\mathcal{X}$ but not strongly, where $u\not\equiv 0$ is a solution of \eqref{eq:P} for some $\lambda<0,$ and
\begin{equation}\label{c45}
    \e(\tilde u)\le m_{\beta}(\rho)-\frac{d}{N}\sr^{N/2d}<0.
\end{equation}
By $q\delta_q>2$ and Pohozaev identity $P_{\beta}(\tilde{u})=0$, we get
\begin{equation*}
    \e(\tilde u)=\frac{\beta}{q}\left(\frac{q\delta_q}{2}-1\right)\||x|^{-b}\tilde{u}\|_q^q+\frac{d}{N}\||x|^{-b}\tilde{u}\|_\t^\t>0.
\end{equation*}
This contradicts \eqref{c45}. As a result, the second option $(II)$ of Proposition \ref{C32} is valid. A subsequence $u_n$ converges strongly to $\tilde{u}$ in $\mathcal{X}$.\QED
\section{Second solution for mass-subcritical case}
In this section, the proof of Theorem \ref{C1} \textit{(ii)} is completed by establishing the existence of a second solution in the mass-subcritical regime, where the associated functional exhibits a mountain pass geometry.
\begin{lemma}
    $\displaystyle M_{\beta}(\rho):=\inf_{u\in\mathcal{M}_{\rho,\beta}^-}\e(u)>0.$
\end{lemma}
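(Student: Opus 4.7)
My plan is to exploit the variational characterization of $\mathcal{M}_{\rho,\beta}^-$ as the set of global maxima of the fiber maps $\Phi_\beta^u$, combined with the pointwise lower bound on $\mathcal{E}_\beta$ provided by the auxiliary function $f$ from Lemma~\ref{C2}. The key observation is that for any $u\in\mathcal{M}_{\rho,\beta}^-$, the point $t=0$ is precisely the global maximum $t_2$ of $\Phi_\beta^u$ identified in Lemma~\ref{C12}(i)--(iii), so we have the free identity $\mathcal{E}_\beta(u)=\sup_{t\in\R}\Phi_\beta^u(t)$. This allows us to bound $\mathcal{E}_\beta(u)$ from below by the supremum of a $u$-independent function.

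First I would write, for an arbitrary $v\in S_\rho$, the chain of estimates obtained by applying the Caffarelli--Kohn--Nirenberg inequality \eqref{c1} to $\||x|^{-b}v\|_q^q$ and the definition of $\mathcal{S}(a,b)$ to $\||x|^{-b}v\|_{2^\sharp}^{2^\sharp}$:
\[
\mathcal{E}_\beta(v)\ge\tfrac{1}{2}\||x|^{-a}\nabla v\|_2^2-\tfrac{\beta}{q}C^q_{a,b}\rho^{(1-\delta_q)q}\||x|^{-a}\nabla v\|_2^{q\delta_q}-\tfrac{1}{2^\sharp\mathcal{S}(a,b)^{2^\sharp/2}}\||x|^{-a}\nabla v\|_2^{2^\sharp}=f(\||x|^{-a}\nabla v\|_2).
\]
Applying this to $v=t\star u$ yields $\Phi_\beta^u(t)\ge f(e^t\||x|^{-a}\nabla u\|_2)$ for every $t\in\R$.

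Next, for any $u\in\mathcal{M}_{\rho,\beta}^-$, I would use Lemma~\ref{C12} to assert that $\mathcal{E}_\beta(u)=\Phi_\beta^u(0)=\max_{t\in\R}\Phi_\beta^u(t)$, since $0$ must coincide with the global maximizer $t_2$ of $\Phi_\beta^u$. Then, since $e^t\||x|^{-a}\nabla u\|_2$ ranges over all of $(0,+\infty)$ as $t$ varies in $\R$, taking the supremum over $t$ gives
\[
\mathcal{E}_\beta(u)\;\ge\;\sup_{t\in\R}f\bigl(e^t\||x|^{-a}\nabla u\|_2\bigr)\;=\;\sup_{s>0}f(s)\;=\;f(s_{\max}),
\]
where $s_{\max}>0$ is the unique strict global maximum point of $f$ identified in Lemma~\ref{C2}.

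Finally, Lemma~\ref{C2} (valid for $\beta<\beta_1$, and hence for $\beta<\beta_*=\min\{\beta_1,\beta_2\}$) ensures $f(s_{\max})>0$. Since $f(s_{\max})$ depends only on $\beta,\rho,N,a,b,q$ and not on $u$, passing to the infimum over $u\in\mathcal{M}_{\rho,\beta}^-$ yields $M_\beta(\rho)\ge f(s_{\max})>0$, as required. The argument is essentially a clean consequence of the previously established geometry of the fibering map; the only subtlety to verify is that $0$ is indeed the global maximum (not only a constrained critical point) of $\Phi_\beta^u$ when $u\in\mathcal{M}_{\rho,\beta}^-$, which is precisely the content of Lemma~\ref{C12}(i),(iii), so no serious obstacle is expected.
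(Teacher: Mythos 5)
Your proof is correct and follows essentially the same route as the paper, which simply declares the lemma ``an immediate consequence of Lemma~\ref{C12}.'' In fact your write-up is more careful than the paper's one-line justification: by bounding $\mathcal{E}_\beta(u)=\max_{t}\Phi_\beta^u(t)\ge \sup_{s>0}f(s)=f(s_{\max})>0$ you make explicit the $u$-independent lower bound needed to pass from pointwise positivity on $\mathcal{M}_{\rho,\beta}^-$ to strict positivity of the infimum, which is exactly the detail the paper leaves implicit.
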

\begin{proof}
 This is an immediate consequence of Lemma~\ref{C12}.  \qed
\end{proof}
\begin{lemma}
    $M_\beta(\rho)<m_\beta(\rho)+\frac{d}{N}\sr^{N/2d}.$
\end{lemma}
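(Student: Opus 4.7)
The plan is to construct a test function by combining the ground state $\tilde u$ from Theorem~\ref{C1}(i) with a concentrating Caffarelli--Kohn--Nirenberg bubble, project it onto $\mathcal{M}_{\rho,\beta}^-$ via Lemma~\ref{C12}, and show that its energy stays strictly below the stated threshold. Let $\tilde u$ be the positive radial ground state from Theorem~\ref{C1}(i), so $\mathcal{E}_\beta(\tilde u)=m_\beta(\rho)<0$ and $\tilde u\in\mathcal{M}_{\rho,\beta}^+$. Let $u_\varepsilon$ be the (truncated) extremal for $\mathcal{S}(a,b)$ used in the appendix, concentrated at the origin, with the usual asymptotic behavior $\||x|^{-a}\nabla u_\varepsilon\|_2^2=\mathcal{S}(a,b)^{N/(2d)}+O(\varepsilon^{(N-2d)/(2d)})$ and $\||x|^{-b}u_\varepsilon\|_{2^{\sharp}}^{2^{\sharp}}=\mathcal{S}(a,b)^{N/(2d)}+O(\varepsilon^{N/(2d)})$ as $\varepsilon\to 0^+$. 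For each $\varepsilon,R>0$ I would then set
\[
w_{\varepsilon,R}:=\theta_{\varepsilon,R}\bigl(\tilde u+R\,u_\varepsilon\bigr),
\]
with $\theta_{\varepsilon,R}=1+o_\varepsilon(1)$ chosen so that $w_{\varepsilon,R}\in S_\rho$; this is possible because the weighted $L^2$--mass of $u_\varepsilon$ is controlled at the appropriate rate by the asymptotics from the appendix.

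By Lemma~\ref{C12}(i) there is a unique $\tau=\tau_{\varepsilon,R}\in\mathbb{R}$ with $\tau\star w_{\varepsilon,R}\in\mathcal{M}_{\rho,\beta}^-$, hence
\[
M_\beta(\rho)\le\mathcal{E}_\beta(\tau\star w_{\varepsilon,R})\le\sup_{t\in\mathbb{R}}\mathcal{E}_\beta(t\star w_{\varepsilon,R}).
\]
I would bound this right-hand side by expanding the three weighted norms of $w_{\varepsilon,R}$ through a Brezis--Lieb style decomposition, exploiting the concentration of $u_\varepsilon$ at the origin together with the smoothness and decay of $\tilde u$. The critical nonlinearity yields, up to lower-order corrections,
\[
\||x|^{-b}w_{\varepsilon,R}\|_{2^{\sharp}}^{2^{\sharp}}\ge\||x|^{-b}\tilde u\|_{2^{\sharp}}^{2^{\sharp}}+R^{2^{\sharp}}\||x|^{-b}u_\varepsilon\|_{2^{\sharp}}^{2^{\sharp}}+2^{\sharp}R\,\mathcal{I}_\varepsilon,
\]
where $\mathcal{I}_\varepsilon:=\int_{\mathbb{R}^N}|x|^{-b\,2^{\sharp}}\,\tilde u\,u_\varepsilon^{2^{\sharp}-1}\,dx>0$ is the positive interaction integral, with analogous (and subdominant) interaction corrections appearing in the gradient and $L^q$ expansions.

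Substituting these into $\mathcal{E}_\beta(t\star w_{\varepsilon,R})$ makes the $t$-dependence split into two almost-independent fibering pieces minus a positive interaction piece. Optimizing in $t$, the fiber attached to $\tilde u$ contributes at most $\max_t\Phi_\beta^{\tilde u}(t)+o_\varepsilon(1)$, which---because $\tilde u\in\mathcal{M}_{\rho,\beta}^+$ and by the $C^1$-smoothness of $u\mapsto t_2$ in Lemma~\ref{C12}(iv)---is controlled by $m_\beta(\rho)+o_\varepsilon(1)$ after a careful $t$-optimization keeping the projected function close to $\mathcal{M}_{\rho,\beta}^+$; the bubble fiber contributes at most $\tfrac{d}{N}\mathcal{S}(a,b)^{N/(2d)}+O(\varepsilon^{(N-2d)/(2d)})$ by the standard critical estimate already used in Lemmas~\ref{C17} and~\ref{C41}; and the interaction piece supplies a strictly negative correction of order $\mathcal{I}_\varepsilon$.

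The hard part will be to show that this negative correction $-CR\,\mathcal{I}_\varepsilon$ strictly dominates all the positive remainders that appear: the $O(\varepsilon^{(N-2d)/(2d)})$ bubble error, the subcritical $L^q$ cross terms, the weighted-gradient cross product between $\tilde u$ and $u_\varepsilon$, and the error coming from the mass-renormalization factor $\theta_{\varepsilon,R}$. This reduces to a sharp comparison of orders in $\varepsilon$, which relies precisely on the fine asymptotic estimates for $u_\varepsilon$ proved in the appendix, in the same spirit as those invoked in the proofs of Lemmas~\ref{C17} and~\ref{C41}. Once these orders are pinned down, fixing $R>0$ appropriately and taking $\varepsilon>0$ small enough forces $\sup_t\mathcal{E}_\beta(t\star w_{\varepsilon,R})<m_\beta(\rho)+\tfrac{d}{N}\mathcal{S}(a,b)^{N/(2d)}$, which yields the desired strict inequality for $M_\beta(\rho)$.
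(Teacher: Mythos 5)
Your overall strategy (ground state plus concentrating bubble, projection onto $\mathcal{M}_{\rho,\beta}^-$, and a positive interaction term beating the bubble error) is the right one and matches the paper in spirit, but the specific way you parametrize the test path breaks the key estimate. You bound $M_\beta(\rho)$ by $\sup_{t\in\mathbb{R}}\mathcal{E}_\beta(t\star w_{\varepsilon,R})$ where the mass-preserving dilation $t\star$ acts on the \emph{whole} sum $\tilde u+R\,u_\varepsilon$. After the Brezis--Lieb splitting, the $\tilde u$-part of this expression is the full fiber $\Phi_\beta^{\tilde u}(t)$, and its supremum over $t$ is $\mathcal{E}_\beta(t_2\star\tilde u)\ge M_\beta(\rho)>0$, not $m_\beta(\rho)<0$. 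Since $m_\beta(\rho)$ is strictly negative, the required inequality $\sup_t\mathcal{E}_\beta(t\star w_{\varepsilon,R})<m_\beta(\rho)+\tfrac{d}{N}\mathcal{S}(a,b)^{N/2d}$ demands that the test path retain the full negative contribution $m_\beta(\rho)$ of the ground state; an unrestricted sup over the joint dilation destroys exactly that contribution, and the resulting bound $M_\beta(\rho)\le M_\beta(\rho)+\tfrac{d}{N}\mathcal{S}(a,b)^{N/2d}+\dots$ is vacuous. Your remark that one should keep ``the projected function close to $\mathcal{M}_{\rho,\beta}^+$'' during the $t$-optimization is precisely the missing step, and it is not automatic: the projection of $w_{\varepsilon,R}$ onto $\mathcal{M}_{\rho,\beta}^-$ occurs at $t=t_2(w_{\varepsilon,R})$, and nothing in your construction forces this $t$ to be near $0$.

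The paper resolves this by using the bubble amplitude, not the dilation, as the path parameter: it sets $\hat u_{\varepsilon,t}=\tilde u+t\,u_\varepsilon$, restores the mass by the explicit dilation $\tilde v_{\varepsilon,t}(x)=\tau^{\frac{N-2a-2}{2}}\hat u_{\varepsilon,t}(\tau x)$ with $\tau=\||x|^{-a}\hat u_{\varepsilon,t}\|_2/\rho=1+O(\varepsilon^{\frac{N-2d}{4d}})$ (which leaves the gradient and critical norms unchanged), and then shows via a continuity/intermediate-value argument on the projection parameter $\tau_{\varepsilon,t}$ that the path $t\mapsto\tilde v_{\varepsilon,t}$ itself crosses $\mathcal{M}_{\rho,\beta}^-$ at some finite $t>0$. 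Hence $M_\beta(\rho)<\sup_{t\ge0}\mathcal{E}_\beta(\tilde v_{\varepsilon,t})$, and along this path $\tilde u$ is never dilated, so its contribution stays pinned at $m_\beta(\rho)$ while $\sup_{t}\mathcal{E}_\beta(t u_\varepsilon)\le\tfrac{d}{N}\mathcal{S}(a,b)^{N/2d}+O(\varepsilon^{\frac{N-2d}{2d}})$ and the interaction terms, computed using the equation satisfied by $\tilde u$ and the appendix estimates $\int\tilde u\,u_\varepsilon|x|^{-2a}\,dx=O(\varepsilon^{\frac{N-2d}{4d}})$, supply a negative correction of order $\varepsilon^{\frac{N-2d}{4d}}$ that dominates the $O(\varepsilon^{\frac{N-2d}{2d}})$ bubble error. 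To repair your argument you would need either to adopt this two-parameter scheme or to prove that the $\mathcal{M}_{\rho,\beta}^-$-projection time of $w_{\varepsilon,R}$ tends to $0$ as $\varepsilon\to0$ for a suitable choice of $R$, which is essentially the same work.
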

\begin{proof}
Let $\hat{u}_{\var,t}=\tilde{u}+tu_{\varepsilon}$. Define fibering map as \[\tilde{v}_{\var,t}(x):=\tau^{\frac{N-2a-2}{2}}\hat{u}_{\var,t}(\tau x),\] where $\tau=\frac{\||x|^{-a}\hat{u}_{\var,t}\|_2}{\rho}$, and hence $\tilde{v}_\var\in\S$, and 
\begin{equation}\label{c414}
\begin{aligned}
    \||x|^{-a}\na\tilde{v}_{\var,t}(x)\|_2^2&=\||x|^{-a}\na\hat{u}_{\var,t}(x)\|_2^2,\\
       \||x|^{-b}\tilde{v}_{\var,t}(x)\|_\t^\t= \||x|^{-b}\hat{u}_{\var,t}(x)\|_\t^\t,&~
        \||x|^{-b}\tilde{v}_{\var,t}(x)\|_q^q=\tau^{(\delta_q-1)q}\||x|^{-b}\hat{u}_{\var,t}\|_q^q.
    \end{aligned}
\end{equation}
By Lemma \ref{C12}, there exists $\tau_{\var,t}>0$ such that $({\tau_{\var,t}})^{\frac{N-2a}{2}} \tilde{v}_{\var,t}(\tau_{\var,t}x)\in\mathcal{M}_{\rho,\beta}^-.$ This implies 
	\begin{equation}\label{c77}
	\||x|^{-a}\tilde{v}_{\var,t}\|_2^2\tau_{\var,t}^{2-q\delta_q}=\beta \delta_q\||x|^{-b}\tilde{v}_{\var,t}\|_q^q+\tau_{\var,t}^{\t-q\delta_q}\||x|^{-b}\tilde{v}_{\var,t}\|_\t^\t.
	\end{equation}
	 Since $\tilde{u}\in \mathcal{M}_{\beta,\rho}^+$, therefore $\tau_{\var,0}>1$.  Using Proposition \ref{C11} and \eqref{c77}, we obtain $\tau_{\var,t}\to 0$ as $t\to+\infty$ uniformly for  sufficiently small $\var>0$. Applying Lemma \ref{C12} once more, we deduce that \(\tau_{\var,t}\) is unique and standard proofs demonstrate that  \(\tau_{\var,t}\) is continuous for \(t \), resulting there exists $t>0$, such that \(\tau_{\var,t} = 1\). It follows that
	\begin{equation}\label{c99}
		M_{\beta}(\rho)<\sup_{t\ge0}\e(\tilde{v}_{\var,t}).
	\end{equation}
    Note that $\tilde{u}\in S_\rho$ and $u_{\var}$ are positive functions. Then, in view Proposition \ref{C11}, and \eqref{c414}, one can find $t_0>0$ such that for $t<\frac{1}{t_0}$ and $t>t_0$ , we have
    \begin{align}\label{c98}
           \notag \e(\tilde{v}_{\var,t})&=\frac{1}{2}\||x|^{-a}\na \hat{u}_{\var,t}\|_2^2-\frac{\beta}{q}\tau^{(\delta_q-1)q}\||x|^{-b}\hat{u}_{\var,t}\|_q^q-\frac{1}{\t}\||x|^{-b}\hat{u}_{\var,t}\|_\t^\t\\
            &<m_\beta(\rho)+\frac{d}{N}\sr^{N/2d}-\gamma,
    \end{align}
    where $\gamma>0$. Since $\tilde{u}$ is a radial solution of \eqref{eq:P}, then by \cite[Theorem 1.2]{felli2003note} and Proposition \ref{C11}, we get 
    \[\int_{\R^N}\tilde{u}u_{\varepsilon}\dx=O(\var^{\frac{N-2d}{4d}})\]
and
\[\int_{\R^N}\tilde{u}|u_{\varepsilon}|^{\t-2}u_\var\dx=O(\var^{\frac{N-2d}{4d}}).\]
Hence, for $t_0^{-1}\le t\le t_0$, we obtain
\[\tau^2=\frac{\||x|^{-a}\hat{u}_{\var,t}\|_2^2}{\rho^2}=1+\frac{2t}{\rho^2}\int_{\R^N}\tilde{u}u_{\var}\dx+t^2\||x|^{-a}u_{\var}\|_2^2=1+O(\var^{\frac{N-2d}{4d}}).\]
Again using the fact that $\tilde{u}$ is a solution of \eqref{eq:P} for some $\tilde{\lambda}<0,$ we deduce that for $t_0^{-1}\le t\le t_0$,
\begin{align*}
    \e({\tilde{v}_{\var,t}})=&
   \frac{1}{2}\||x|^{-a}\na\hat{u}_{\var,t}\|_2^2-\beta\frac{\tau^{q(\delta_q-1)}}{q}\||x|^{-b}\hat{u}_{\var,t}\|_q^q-\frac{1}{\t}\||x|^{-b}\hat{u}_{\var,t}\|_{\t}^{\t}\\
        &\le m_\beta(\rho)+\e(tu_{\var})+O(\var^{\frac{N-2d}{2d}})\int_{\R^N}\tilde{u}|tu_{\varepsilon}|^{\t-2}tu_\var\dx\\&\quad+t\big(\tilde{\lambda}+\beta\frac{(\delta_q-1)}{\rho^2}\|\hat{u}_{\var,t}\|_q^q\big)\int_{\R^N}\tilde{u}u_{\var}\dx\\
        &=m_{\beta}(\rho)+\e(tu_{\var})-O(\var^{\frac{N-2d}{4d}})+O(\var^{\frac{N-2d}{2d}}),
    \end{align*}
     where we have used the fact that \[\tilde{\lambda}\rho^2=\tilde{\lambda}\||x|^{-a}\tilde{u}\|_2^2=\beta(\delta_q-1)\||x|^{-b}\tilde{u}\|_q^q.\]
  It implies for $\var$ sufficiently small 
    \[\e(\tilde{v}_{\var,t})\le m_{\beta}(\rho)+\frac{d}{N}\sr^{N/2d}-O(\var^{\frac{N-2d}{4d}})+O(\var^{\frac{N-2d}{2d}}).\]
    It follows from \eqref{c98} that
   \[\sup_{t\ge0}\e(\tilde{v}_{\var,t})<m_{\beta}(\rho)+\frac{d}{N}\sr^{N/2d}.\]
   The conclusion then follows from \eqref{c99}.\QED
\end{proof}
\begin{rem}
    Since $m_{\beta}(\rho)<0$, therefore we have $M_{\beta}(\rho)<\frac{d}{N}\sr^{N/2d}$.
\end{rem}
Let 
\begin{equation*}
   {v}_{ \tilde{\rho}}=\frac{\tilde{\rho}}{\rho}u\in S_{\tilde{\rho}},\quad\forall \tilde{\rho}>0,
\end{equation*}
where $u\in \mathcal{M}_{\rho,\beta}^{\pm},~\rho>0$ and $\beta<\beta_*(\rho)$. By Lemma \ref{C12}, there exists $t\in \R$ such that $t\star {v}_{ \tilde{\rho}}\in\mathcal{M}_{\tilde{\rho},\beta}^{\pm}$, where $\tilde{\rho}>0$ and satisfy $\beta<\beta_*(\tilde{\rho})$. It is obvious that $t_{\pm}(\rho)=0.$
\begin{lemma}\label{C96}
    If $\beta<\beta_*$, $t_{\pm}(\rho)$ is differentiable and
    \begin{equation*}
        t'_{\pm}=\dfrac{\beta q\delta_q\p\|_q^q+\t\p\|_\t^\t-2\n\|_2^2}{\rho\left(2\n\|_2^2-\beta q\delta_q^2\p\|_q^q-\t\p\|_\t^\t\right)}.
    \end{equation*}
    Moreover, if $\tilde{\rho}>\rho$ and $\beta<\beta_*(\tilde{\rho})$, then $\e(t_\pm\star v_{\tilde{\rho}})<\e(u).$
\end{lemma}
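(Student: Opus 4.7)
The plan is to realize $t_\pm$ as an implicit function of $\tilde\rho$ through the Pohozaev constraint, and then to differentiate the energy along the corresponding curve, using the fact that each point on the curve is a critical point of the fibering map to force almost all terms to drop out.

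First, I would set $F(t,\tilde\rho):=P_\beta(t\star v_{\tilde\rho})$. The scaling rules $\||x|^{-a}\na(t\star w)\|_2^2=e^{2t}\||x|^{-a}\na w\|_2^2$ and $\||x|^{-b}(t\star w)\|_q^q=e^{q\delta_q t}\||x|^{-b}w\|_q^q$, combined with $v_{\tilde\rho}=(\tilde\rho/\rho)u$, give
\[
F(t,\tilde\rho)=e^{2t}(\tilde\rho/\rho)^2\n\|_2^2-\beta\delta_q e^{q\delta_q t}(\tilde\rho/\rho)^q\p\|_q^q-e^{\t t}(\tilde\rho/\rho)^{\t}\p\|_\t^\t.
\]
Since $u\in\mathcal{M}_{\rho,\beta}^\pm$ one has $F(0,\rho)=0$, and because $\mathcal{M}_{\rho,\beta}^0=\emptyset$ we have $\partial_tF(0,\rho)=(\f)''(0)\ne 0$. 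The implicit function theorem then produces a $C^1$ branch $\tilde\rho\mapsto t_\pm(\tilde\rho)$ near $\rho$ with $t_\pm(\rho)=0$, and the relation $t_\pm'(\rho)=-\partial_{\tilde\rho}F(0,\rho)/\partial_tF(0,\rho)$ combined with the two elementary partial derivative computations yields exactly the claimed formula for $t_\pm'$.

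For the energy inequality, define $\Psi(t,\tilde\rho):=\e(t\star v_{\tilde\rho})$ and $h(\tilde\rho):=\Psi(t_\pm(\tilde\rho),\tilde\rho)$. The chain rule gives
\[
h'(\tilde\rho)=\partial_t\Psi(t_\pm(\tilde\rho),\tilde\rho)\,t_\pm'(\tilde\rho)+\partial_{\tilde\rho}\Psi(t_\pm(\tilde\rho),\tilde\rho),
\]
where the first summand vanishes because $\partial_t\Psi(t,\tilde\rho)=(\Phi_\beta^{v_{\tilde\rho}})'(t)=P_\beta(t\star v_{\tilde\rho})$, which equals zero at $t=t_\pm(\tilde\rho)$ by construction. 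Expanding $\partial_{\tilde\rho}\Psi$ explicitly and substituting the Pohozaev identity $P_\beta(t_\pm\star v_{\tilde\rho})=0$ to eliminate the gradient term triggers a cascade of cancellations, leaving
\[
h'(\tilde\rho)=\frac{\beta(\delta_q-1)}{\tilde\rho}(\tilde\rho/\rho)^q e^{q\delta_q t_\pm(\tilde\rho)}\p\|_q^q<0,
\]
since $\delta_q\in(0,1)$ for every $q\in(2,\t)$. Integrating $h'$ from $\rho$ to any $\tilde\rho>\rho$ then gives $h(\tilde\rho)<h(\rho)=\e(u)$, which is the asserted strict inequality.

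The main obstacle is to guarantee that $t_\pm(\cdot)$ is globally well-defined and of class $C^1$ on the entire segment $[\rho,\tilde\rho]$, so that the integration of $h'$ is legitimate. This follows from the uniqueness and $C^1$-regularity statement of Lemma \ref{C12} applied at each intermediate mass, together with the fact that $\mathcal{M}_{\tau,\beta}^0=\emptyset$ for every $\tau$ in this interval under the hypothesis $\beta<\beta_*(\tau)$; one needs to confirm that this admissibility condition propagates along the whole segment (either by monotonicity of the threshold or by a continuity argument near each $\tau$), after which the differential argument above applies uniformly and the conclusion is immediate.
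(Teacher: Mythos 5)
Your proof is correct and follows essentially the same route as the paper: the implicit function theorem applied to the Pohozaev constraint $\Psi(\tilde\rho,t)=P_\beta(t\star v_{\tilde\rho})=0$ yields the formula for $t_\pm'$, and the strict negativity of the derivative of the energy along the branch gives $\e(t_\pm\star v_{\tilde\rho})<\e(u)$. Your chain-rule computation of $h'$ (exploiting that $\partial_t\Psi$ vanishes at the critical point of the fibering map) is a slightly cleaner version of the paper's explicit first-order expansion, and your closing remark about propagating the condition $\beta<\beta_*(\tau)$ along the whole segment $[\rho,\tilde\rho]$ (which holds since $\beta_*$ is decreasing in the mass) carefully addresses a point the paper dispatches with ``since $\rho>0$ is arbitrary.''
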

\begin{proof}
        Since $t_\pm(\tilde{\rho})\star v_{\tilde\rho}\in\mathcal{M}_{\tilde{\rho},\beta}^\pm$, we have
        \begin{equation*}
            \left(\frac{\tilde{\rho}}{\rho}e^{t(\tilde{\rho})}\right)^2\||x|^{-a}\na u\|_2^2=\beta\left(\frac{\tilde{\rho}}{\rho}\right)^{q}(e^{t(\tilde{\rho})})^{q\delta_q}\||x|^{-b}u\|_q^q+\left(\frac{\tilde{\rho}}{\rho}e^{t(\tilde{\rho})}\right)^{\t}\||x|^{-b}u\|_\t^\t.
        \end{equation*}
        Define
        \begin{equation*}
            \Psi(\tilde{\rho},t):=\left(\frac{\tilde{\rho}}{\rho}e^{t(\tilde{\rho})}\right)^2\||x|^{-a}\na u\|_2^2-\beta\delta_q\left(\frac{\tilde{\rho}}{\rho}\right)^{q}(e^{t(\tilde{\rho})})^{q\delta_q}\||x|^{-b}u\|_q^q-\left(\frac{\tilde{\rho}}{\rho}e^{t(\tilde{\rho})}\right)^{\t}\||x|^{-b}u\|_\t^\t.
        \end{equation*}
        Then $\Psi(\rho,t(\rho))\equiv0$, and for $u\in \mathcal{M}_{\rho,\beta}^\pm$,
        \begin{equation*}
            \partial_t\Psi(\rho,0)=2\n\|_2^2-\beta q\delta_q^2\p\|_q^q-\t\p\|_\t^\t\neq0.
        \end{equation*}
         By the implicit function theorem, $t'_\pm$ exist and is given by
        \begin{equation*}
            t'_\pm=\dfrac{\beta q\delta_q\p\|_q^q+\t\p\|_\t^\t-2\n\|_2^2}{\rho\left(2\n\|_2^2-\beta q\delta_q^2\p\|_q^q-\t\p\|_\t^\t\right)}.
        \end{equation*}
        Since $t_\pm(\tilde{\rho})\star v_{\tilde{\rho}}\in\mathcal{M}_{\tilde{\rho}}^\pm$ and $u\in\mathcal{M}_{\rho,\beta}^\pm$, we obtain
        \begin{align*}
            \e(t_\pm(\tilde{\rho})\star v_{\tilde{\rho}})&=\left(\frac{q\delta_q-2}{2q\delta_q}\right)\||x|^{-a}\na(t_\pm(\tilde{\rho})\star v_{\tilde{\rho}})\|_2^2+\left(\frac{\t-q\delta_q}{\t q\delta_q}\right)\||x|^{-b}(t_\pm(\tilde{\rho})\star v_{\tilde{\rho}})\|_\t^\t\\
            &=\left(\frac{\tilde{\rho}}{\rho}e^{t(\tilde{\rho})}\right)^2\left(\frac{q\delta_q-2}{2q\delta_q}\right)\n\|_2^2+\left(\frac{\tilde{\rho}}{\rho}e^{t(\tilde{\rho})}\right)^\t\left(\frac{\t-q\delta_q}{\t q\delta_q}\right)\p\|_\t^\t\\
            &=\left(\frac{q\delta_q-2}{2q\delta_q}\right)\n\|_2^2
            +\frac{1+\rho t'(\rho)}{\rho}\Bigg(2\Big(\frac{q\delta_q-2}{2q\delta_q}\Big)\n\|_2^2\\&\qquad+\t\Big(\frac{\t-q\delta_q}{\t q\delta_q}\Big)\p\|_\t^\t\Bigg)(\tilde{\rho}
            -\rho)+\left(\frac{\t-q\delta_q}{\t q\delta_q}\right)\p\|_\t^\t+o(\tilde{\rho}-\rho).
        \end{align*}
        Moreover
        \begin{equation*}
            \frac{1+\rho t'(\rho)}{\rho}=\frac{\beta q\delta_q(1-\delta_q)\p\|_q^q}{\rho\left(2\n\|_2^2-\beta q\delta_q^2\p\|_q^q-\t\p\|_\t^\t\right)},
        \end{equation*}
        which yields 
        \begin{equation*}
        \e(t_\pm(\tilde{\rho})\star v_{\tilde{\rho}})=\e(u)-\beta\frac{q\delta_q\p\|_q^q}{\rho}(\tilde{\rho}-\rho)+o(\tilde{\rho}-\rho).    
        \end{equation*}
        Consequently
        \begin{equation*}
            \frac{d~\e(t_\pm(\tilde{\rho})\star v_{\tilde{\rho}})}{d~\tilde{\rho}}\Big|_{\tilde{\rho}=\rho}=-\frac{\beta(1-\delta_q)\p\|_q^q}{\rho}<0.
        \end{equation*}
      Since $\rho>0$ is arbitrary and  $t_\pm(\tilde{\rho})\star v_{\tilde{\rho}}\in\mathcal{M}^\pm_{\tilde{\rho}}$,  we deduce \(\e(t_\pm(\tilde{\rho})\star v_{\tilde{\rho}})<\e(u)\) for all $\tilde{\rho}>\rho.$\qed
    \end{proof}
    \begin{lemma}\label{C97}
       $ \mathcal{M}_{\rho,0}=\mathcal{M}^-_{\rho,0}$ and
       $\displaystyle\inf_{u\in\mathcal{M}_{\rho,0}}\mathcal{E}_0(u)=\inf_{u\in\S}\max_{t\in\R}\mathcal{E}_0(t\star u)=\inf_{u\in \S}\frac{d}{N}\left(\frac{\n\|_2^2}{\p\|_\t^\t}\right)^{\frac{N}{2d}}.$
    \end{lemma}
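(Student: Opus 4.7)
The plan is to exploit the particularly simple two-term structure of the fiber map at $\beta=0$:
\[
\Phi_0^u(t)=\frac{e^{2t}}{2}\n\|_2^2-\frac{e^{\t t}}{\t}\p\|_\t^\t.
\]
The derivative $(\Phi_0^u)'(t)=e^{2t}\n\|_2^2-e^{\t t}\p\|_\t^\t$ vanishes at a unique point $t_*=t_*(u)\in\R$ determined by $e^{(\t-2)t_*}=\n\|_2^2/\p\|_\t^\t$. Since $\Phi_0^u(t)\to 0^+$ as $t\to-\infty$ and $\Phi_0^u(t)\to-\infty$ as $t\to+\infty$, this $t_*$ is necessarily the unique (strict) global maximum of $\Phi_0^u$, with $(\Phi_0^u)''(t_*)=-(\t-2)e^{2t_*}\n\|_2^2<0$ by direct differentiation.

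From this both conclusions follow in one stroke. By Corollary~\ref{CA1}, $u\in\mathcal{M}_{\rho,0}$ if and only if $0$ is a critical point of $\Phi_0^u$, which by uniqueness forces $t_*(u)=0$; then $(\Phi_0^u)''(0)=(\Phi_0^u)''(t_*)<0$, so $u\in\mathcal{M}_{\rho,0}^-$, giving $\mathcal{M}_{\rho,0}^0=\emptyset$ and $\mathcal{M}_{\rho,0}=\mathcal{M}_{\rho,0}^-$. For the first equality of the displayed chain, the strict-maximum property supplies $\max_{t\in\R}\mathcal{E}_0(t\star u)=\Phi_0^u(t_*(u))=\mathcal{E}_0(t_*(u)\star u)$, and (again by Corollary~\ref{CA1}) the map $u\mapsto t_*(u)\star u$ is a surjection from $\S$ onto $\mathcal{M}_{\rho,0}$, so taking $\inf_{u\in\S}$ on both sides produces $\inf_{u\in\S}\max_{t\in\R}\mathcal{E}_0(t\star u)=\inf_{\mathcal{M}_{\rho,0}}\mathcal{E}_0$.

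For the final closed-form expression, I would substitute $e^{(\t-2)t_*}=\n\|_2^2/\p\|_\t^\t$ into the identity $\Phi_0^u(t_*)=(\tfrac12-\tfrac1{\t})e^{2t_*}\n\|_2^2=\tfrac{d}{N}e^{2t_*}\n\|_2^2$ and simplify using $\tfrac12-\tfrac1{\t}=\tfrac{d}{N}$ together with $\tfrac{\t}{\t-2}=\tfrac{N}{2d}$; routine exponent manipulation yields the claimed quantity. The passage to $\tfrac{d}{N}\sr^{N/2d}$ for the final infimum over $\S$ follows from the definition of $\sr$ combined with the $\star$-invariance of the scale-invariant quotient that emerges after the simplification. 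No genuine analytic obstacle arises here: the whole lemma reduces to bookkeeping once the explicit two-term structure of $\Phi_0^u$ is in hand, with the only care being the correct tracking of exponents in the evaluation of $\Phi_0^u(t_*)$.
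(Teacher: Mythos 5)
Your proof is correct and follows essentially the same route as the paper: identify the unique critical point $t_*$ of the two-term fiber map $\Phi_0^u$, check it is a strict global maximum so that $\mathcal{M}_{\rho,0}=\mathcal{M}^-_{\rho,0}$, and use the projection $u\mapsto t_*(u)\star u$ between $\S$ and $\mathcal{M}_{\rho,0}$ to identify the two infima before evaluating $\Phi_0^u(t_*)$ explicitly. One caveat you inherit from the statement itself: the evaluation actually yields $\frac{d}{N}\bigl(\n\|_2^2/\p\|_{\t}^{2}\bigr)^{N/2d}$, with the scale-invariant Rayleigh quotient of $\sr$ (i.e.\ $\p\|_{\t}^{2}$, not $\p\|_{\t}^{\t}$) in the denominator --- the paper's own final display contains the same slip, so your ``routine exponent manipulation'' should be understood as landing on this corrected formula.
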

    \begin{proof}
        We can easily show that $\Phi^u_0$ has a unique maximum point $t^u_0$ for every $u\in\S$ and 
        \begin{equation}\label{c101}
            e^{t^u_0(\t-2)}=\frac{\n\|_2^2}{\p\|_\t^\t}.
        \end{equation}
        So we have $\mathcal{M}_{\rho,0}^-=\emptyset$ and $\mathcal{M}_{\rho,0}=\mathcal{M}_{\rho,0}^-.$ If $u\in\mathcal{M}_{\rho,0},$ then we see that $t_{0}^u=0$ and
        \begin{equation*}
            \mathcal{E}_0(u)=\max_{s\in\R}\mathcal{E}_0(t\star u)\ge\inf_{u\in\S}\mathcal{E}_0(t\star u).
        \end{equation*}
   If $u\in\S, $ then $t_0^u\star u\in \mathcal{M}_{\rho,0}$ and
   $$\max_{t\in\R}\mathcal{E}_0(t\star u)=\mathcal{E}_0(t^u_0\star u)\ge\inf_{u\in\mathcal{M}_{\rho,0}}\mathcal{E}_0(u).$$
   From above information and \eqref{c101}, we deduce that
   \begin{align*}
    \inf_{u\in\mathcal{M}_{\rho,0}}\mathcal{E}_0(u)&=\inf_{u\in\S}\max_{t\in\R}\mathcal{E}_0(t\star u)\\
    &=\inf_{u\in\S}\Bigg(\frac{1}{2}\Bigg(\frac{\n\|_2^2}{\p\|_\t^\t}\Bigg)^{\frac{2}{\t-2}}\n\|_2^2-\frac{1}{\t}\Bigg(\frac{\n\|_2^2}{\p\|_\t^\t}\Bigg)^{\frac{\t}{\t-2}}\p\|_\t^\t\Bigg)\\
    &=\inf_{u\in\S}\frac{d}{N}\left(\frac{\n\|_2^2}{\p\|_\t^\t}\right)^{\frac{N}{2d}}.
   \end{align*}\QED
    \end{proof}
    \textbf{Proof of Theorem 1.1 \textit{(ii)}.}\quad Let $\{u_n\}\subset \mathcal{M}_{\rho,\beta}^-$ be a minimizing sequence. Since $\{u_n\}\subset \mathcal{M}_{\rho,\beta}^-$, it follows that 
    \begin{equation*}
        \e(u_n)=\frac{\beta}{q}\left(\frac{q\delta_q}{2}-1\right)\p_n\|_q^q+\frac{d}{N}\p_n\|_\t^\t.
    \end{equation*}
   Hence, sequence $\{u_n\}$ is bounded in $\mathcal{X}$, and therefore, up to a subsequence, it converges weakly to some $\hat{u}\in \mathcal{X}$, that is, $u_n \rightharpoonup \hat{u}$ weakly in $\mathcal{X}$ as $n \to \infty$. By Proposition \ref{Y1}, we have $u_n \to \hat{u}$ strongly in $L^q_r(\R^N;|x|^{-bq})$ as $n \to \infty$ and $\hat{u} \neq 0$.
   
   Set $v_n=u_n-\hat{u}$, then by Proposition \ref{C32},we have 
   \begin{equation*}
       \lim_{n\to\infty}\||x|^{-a}\na v_n\|_2^2=\lim_{n\to\infty}\||x|^{-b}v_n\|_\t^\t=\Lambda.
   \end{equation*}
   We have two cases to consider, either $\Lambda=0$ or \(\Lambda\ge \sr^{N/2d} \). If \(\Lambda=0\), the second alternative in Proposition \ref{Y1} applies and the conclusion follows immediately. Assume, if \(\Lambda\ge \sr^{N/2d} \). In this case, by applying the Fatou lemma, we obtain \(\||x|^{-a}\hat{u}\|_2^2:=\hat{\rho}^2,~0<\hat{\rho}\le\rho\). Define \(t_n\in\R\) by
   \[e^{(\t-2)t_n}=\frac{\||x|^{-a}\hat{u}\|_2^2}{\||x|^{-b}v_n\|_\t^\t}.\]
   Then \(\||x|^{-a}\na(t_n\star v_n)\|_2^2=\||x|^{-b}(t_n\star v_n)\|_\t^\t\ge \sr^{\frac{N}{2d}}\) and the sequence $\{t_n\}$ is bounded. Let \(\||x|^{-a}\hat{u}\|_2^2:=\hat{\rho},~0<\hat{\rho}\le \rho,\) then there exists \(i_o\) such that \(i_o\star\hat{u}\in\mathcal{M}^-_{\hat{\rho},\beta}.\) We claim that, up to subsequence, \(t_n\ge i_o\). Otherwise, \(~t_n<i_o\) for all \(n\), then using Lemma \ref{C32} and \ref{C96} , we obtain
   \begin{align*}
   M_{\beta}(\rho)+o(1)&=\e(u_n)\ge \e(t_n\star u_n)\\
   &=\e(t_n\star \hat{u})+\mathcal{E}_0(t_n\star v_n)+o(1)\\
   &\ge m_\beta(\hat{\rho})+\frac{d}{N}\sr^{N/2d}+o(1)\\
   &\ge m_\beta({\rho})+\frac{d}{N}\sr^{N/2d}+o(1),
   \end{align*}
   a contradiction. Hence, \(t_n\ge i_o\) for all \(n\). Therefore, we have
   \begin{align*}
    M_{\beta}(\rho)+o(1)&=\e(u_n)\ge \e(i_o\star u_n)\\
    &=\e(i_o\star \hat{u})+\mathcal{E}_0(i_o\star v_n)+o(1).
    \end{align*}
    By Lemma \ref{C97}, \(\mathcal{E}_0(i_o\star v_n)\ge 0\) and \(t_n\ge i_o\), which yields \(\hat{\rho}=\rho\) and \(M_{\beta}(\rho)=\e(i_o\star \hat{u})\). Consequently, \(u_o:=i_o\star\hat{u}\) solves \eqref{eq:P} for some \(\lambda_0<0\) and it is positive, real-valued.\QED
    \renewcommand{\thesection}{A}
\section{Appendix}
Here, we establish all the crucial estimates of the minimizers of the $\sr$. 
\begin{prop}\label{C11}
    Let $\zeta:\R^N\to\R\cup\{0\}$ be a $C_c^{\infty}$ function such that $0\le\zeta\le1$ and $\zeta(x)=1$ for $|x|<R$ and $\zeta(x)=0$ for $|x|>2R$ . Then for
    \begin{equation}\label{c221}
        u_{\varepsilon}(x):=\zeta(x)U_{\varepsilon}(x),
    \end{equation} and \(\phi\in L^{\infty}_{loc}(\R^N)\), we have following estimates
    \begin{itemize}
        \item[(i)] $\||x|^{-a}\na u_{\varepsilon}\|_2^2= \sr^{\frac{N}{2d}}+O(\varepsilon^{\frac{N-2d}{2d}}),
        $ 
        \item[(ii)] $\||x|^{-b}u_{\varepsilon}\|_{\t}^{\t}=\sr^{\frac{N}{2d}}+O(\varepsilon^{\frac{N}{2d}}),$
        \item[(iii)] \begin{equation*}
          \||x|^{-b}u_{\varepsilon}\|_q^q=  \begin{cases}
            O\left(\varepsilon^{\frac{N-2d}{4d}}\right),&\text{if}~~2<q<\frac{N}{N-2(1+a)+b},\\
            O\left(\varepsilon^{\frac{(2N-Nq+2qd)(N-2d)}{4d(N-2-2a)}}|\log\var|\right),&\text{if}~~q=\frac{N}{N-2(1+a)+b},\\
            O\left(\varepsilon^{\frac{(2N-Nq+2qd)(N-2d)}{4d(N-2-2a)}}\right),&\text{if}~\frac{N}{N-2(1+a)+b}<q<\t.
            \end{cases}
        \end{equation*}
        \item[(iv)] \begin{equation*}
            \|x|^{-a}u_{\varepsilon}\|_2^2=
            \begin{cases}
                O\left(\varepsilon^{\frac{(N-2d)}{d(N-2-2a)}}\right),\quad&\text{if}~~0<a<\max\{0,\frac{N-4}{2}\},\\
                 O\left(\varepsilon^{\frac{(N-2d)}{d(N-2-2a)}}|\log\varepsilon|\right),\quad&\text{if}~~0<a=\frac{N-4}{2},\\
            O\left(\varepsilon^{\frac{(N-2d)}{2d}}\right),\quad&\text{if}~~\max\{0,\frac{N-4}{2}\}<a<\frac{N-2}{2}.      
            \end{cases}
        \end{equation*}
        \item[(v)] \(\displaystyle\int_{\R^N}\frac{\phi u_{\var}^{\t-1}}{|x|^{\t b}}\dx=O(\var^{\frac{N-2d}{4d}}).\)
         \item[(vi)] \(\displaystyle\int_{\R^N}\frac{\phi u_{\var}}{|x|^{2a}}\dx=O(\var^{\frac{N-2d}{4d}}).\)
    \end{itemize}
    \end{prop}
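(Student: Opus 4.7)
The strategy is to exploit the self-similar structure of the profile under the dilation $y=\varepsilon^{-1/\alpha}x$. A direct computation yields
\[
U_{\varepsilon}(x)=C_0\,\varepsilon^{-\frac{N-2d}{4d}}\,W\!\left(\varepsilon^{-1/\alpha}x\right),\qquad W(y):=(1+|y|^{\alpha})^{-\frac{N-2d}{2d}},
\]
so that $W(0)$ is finite and $W(y)\sim |y|^{-(N-2-2a)}$ as $|y|\to\infty$, using the identity $\alpha(N-2d)/(2d)=N-2-2a$. Under this rescaling, the cutoff $\zeta$ acts only on the far region $|y|>R\varepsilon^{-1/\alpha}$, so its influence is negligible in regimes where the relevant rescaled integrals converge at infinity in $y$, while it becomes the leading feature when they do not.

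For (i) and (ii), I would first recall that $U_{\varepsilon}$ itself attains $\mathcal{S}(a,b)$, so the corresponding untruncated norms equal $\mathcal{S}(a,b)^{N/(2d)}$ exactly. Consequently, $\||x|^{-a}\nabla u_{\varepsilon}\|_{2}^{2}$ and $\||x|^{-b}u_{\varepsilon}\|_{2^{\sharp}}^{2^{\sharp}}$ differ from $\mathcal{S}(a,b)^{N/(2d)}$ only by a tail contribution from $\{|x|>R\}$. After the change of variables, the integrability at infinity of $|\nabla W|^{2}|y|^{-2a}$ and of $W^{2^{\sharp}}|y|^{-2^{\sharp}b}$, combined with an elementary polar computation on the tail, yields the remainders $O(\varepsilon^{(N-2d)/(2d)})$ and $O(\varepsilon^{N/(2d)})$.

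The heart of the proof is (iii) and (iv). Here the analysis bifurcates according to whether $|y|^{-\gamma q}W(y)^{q}$ (with $\gamma\in\{a,b\}$) is integrable at infinity, a condition equivalent to $q(\gamma+N-2-2a)>N$. When this holds strictly, the rescaled integral is a finite constant independent of $\varepsilon$, and the $\varepsilon$-exponent comes entirely from the Jacobian and normalization, giving the exponent $(N-\gamma q)/\alpha-q(N-2d)/(4d)$, which simplifies to the expression stated in the proposition using $2/\alpha=(N-2d)/(d(N-2-2a))$. At the critical equality, a logarithmic correction appears because the tail integral saturates at $R\varepsilon^{-1/\alpha}$. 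When the condition fails (as it always does for $q=2$ in (iv) when $a\le (N-4)/2$), I would split $\{|x|<2R\}=\{|x|<\varepsilon^{1/\alpha}\}\cup\{\varepsilon^{1/\alpha}<|x|<2R\}$ and estimate each piece directly using the two asymptotics of $U_{\varepsilon}$: a flat profile of height $\varepsilon^{-(N-2d)/(4d)}$ on the inner ball and the power-law $\varepsilon^{(N-2d)/(4d)}|x|^{-(N-2-2a)}$ on the outer annulus. The dominant contribution is from the outer region, and collecting the powers gives the claimed exponents after algebraic simplification.

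For (v) and (vi), the local boundedness of $\phi$ on $\mathrm{supp}(\zeta)\subset B_{2R}$ reduces the claim to weighted integrals of $u_{\varepsilon}^{2^{\sharp}-1}|x|^{-2^{\sharp}b}$ and $u_{\varepsilon}|x|^{-2a}$, which are handled by the same rescaling: the exponent $N/(2d)-(N+2d)/(4d)=(N-2d)/(4d)$ appears in (v) with an integrable limit integrand, while in (vi) the integrand is borderline non-integrable at infinity, so the truncation at $R\varepsilon^{-1/\alpha}$ produces the same power $(N-2d)/(4d)$ via the computation $(N-2-2a)/\alpha-(N-2d)/(4d)=(N-2d)/(4d)$. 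The main obstacle throughout will be the careful bookkeeping of exponents at the threshold and logarithmic cases---particularly matching the form $(2N-Nq+2qd)(N-2d)/(4d(N-2-2a))$ in (iii) and the borderline value $a=(N-4)/2$ in (iv)---and ensuring that tail and cutoff errors are absorbed into the big-$O$ symbols in the correct regime.
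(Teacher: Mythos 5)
Your proposal is correct and follows essentially the same route as the paper: the rescaling $x=\varepsilon^{1/\alpha}y$, the trichotomy according to whether the rescaled integrand $|y|^{-\gamma q}W(y)^{q}$ is integrable at infinity (with a logarithm at the threshold and the outer annulus dominating when integrability fails), and the same exponent bookkeeping via $2/\alpha=(N-2d)/(d(N-2-2a))$; the only cosmetic differences are that the paper cites an external reference for (i)--(ii) and leaves (v)--(vi) as ``similar,'' whereas you sketch the tail/cutoff estimates explicitly. (One small wording slip: in (vi) the rescaled integrand grows like $|y|$ at infinity rather than being borderline, but your truncated computation and the resulting exponent $\frac{N-2d}{4d}$ are nonetheless correct.)
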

    \begin{proof}
       The proof of  \textit{(i), (ii)} follows from \cite{Mishra}. For $(iii)$, using \eqref{c96}, 
\[    \|\,|x|^{-b}u_\varepsilon\|_{q}^q
=
\int_{\mathbb{R}^N}
|x|^{-bq}\,
\frac{(A\varepsilon)^{\frac{N-2d}{4d}q}\,\zeta(x)^q}
{\big(\varepsilon+|x|^{\alpha}\big)^{\frac{N-2d}{2d}q}}
\dx. 
\]
With the change of variables $x=\varepsilon^{1/\alpha}y$, $dx=\varepsilon^{N/\alpha}dy$, we obtain
\begin{align*}
\|\,|x|^{-b}u_\varepsilon\|_{q}^q
& =
A^{\frac{N-2d}{4d}q}\,
\varepsilon^{\frac{N-bq}{\alpha}-\frac{N-2d}{4d}q}
\int_{\mathbb{R}^N}
\frac{|y|^{-bq}\,\zeta(\varepsilon^{1/\alpha}y)^q}
{\big(1+|y|^{\alpha}\big)^{\frac{N-2d}{2d}q}}
\dy.
\end{align*}
\medskip\noindent
\textbf{Case 1:} $(N-2-2a+b)q>N$.

Thus, the integrand is integrable both around $0$ (because $bq<N$) and at infinity, leading to the required estimate.

\medskip\noindent
\textbf{Case 2:} $(N-2-2a+b)q=N$.

In this case, the integrand behaves like $|y|^{-N}$ at infinity. A standard radial computation shows that
\[
\int_{1<|y|<R}
\frac{|y|^{-bq}}{\big(1+|y|^{\alpha}\big)^{\frac{N-2d}{2d}q}}\dy
\sim C\,\log R \quad\text{as }R\to\infty
\]
for some constant $C>0$. Taking $R\sim \varepsilon^{-1/\alpha}$ (the scale where the change of variables is relevant) and using that $\zeta(\varepsilon^{1/\alpha}y)\to1$, we obtain
\[
\int_{\mathbb{R}^N}
\frac{|y|^{-bq}\,\zeta(\varepsilon^{1/\alpha}y)^q}
{\big(1+|y|^{\alpha}\big)^{\frac{N-2d}{2d}q}}
\dy
=
C\,|\log\varepsilon| + O(1).
\]
Subsequently,
\[
\|\,|x|^{-b}u_\varepsilon\|_{q}^q
=
\widehat K_{a,b,q}\,
\varepsilon^{\frac{(N-2d)}{d(N-2-2a)}}|\log\varepsilon|\quad  \text{  for some $\widehat K_{a,b,q}>0$ }. \]
\medskip\noindent
\textbf{Case 3:} $(N-2-2a+b)q<N$.

A standard radial estimate gives
\[
\int_{\mathbb{R}^N}
\frac{|y|^{-bq}\,\zeta(\varepsilon^{1/\alpha}y)^q}
{\big(1+|y|^{\alpha}\big)^{\frac{N-2d}{2d}q}}
\dy
=
C\,\varepsilon^{-\frac{N-(N-2-2a+b)q}{\alpha}}
+O\!\big(\varepsilon^{-\frac{N-(N-2-2a+b)q}{\alpha}}\big)
\]
for some $C>0$ depending only on $a,b,q,N,\zeta$. A straightforward computation shows that
\[
\frac{N-bq}{\alpha}-\frac{N-2d}{4d}q-\frac{N-(N-2-2a+b)q}{\alpha}
=
\frac{N-2d}{4d}q,
\]
so that
\[
\|\,|x|^{-b}u_\varepsilon\|_{q}^q
=
\widetilde K_{a,b,q}\,
\varepsilon^{\frac{N-2d}{4d}q}
 \quad  \text{  for some $\widehat K_{a,b,q}>0$ }. \] 
\textit{(iv)} Consider 
\begin{align*}
\||x|^{-a}u_\varepsilon\|_{L^2}^2
&=
\int_{\mathbb{R}^N}
|x|^{-2a}\,
\frac{(A\varepsilon)^{\frac{N-2d}{2d}}\;\zeta(x)^2}
{\big(\varepsilon+|x|^{\alpha}\big)^{\frac{N-2d}{d}}}
\dx \\
& 
=
A^{\frac{N-2d}{2d}}\,
\varepsilon^{\frac{N-2d}{2d}}
\varepsilon^{\frac{N}{\alpha}}
\varepsilon^{-\frac{N-2d}{d}}
\varepsilon^{-\frac{2a}{\alpha}}
\int_{\mathbb{R}^N}
\frac{|y|^{-2a}\,\zeta(\varepsilon^{1/\alpha}y)^2}
{\big(1+|y|^{\alpha}\big)^{\frac{N-2d}{d}}}\dy\\
& = 
A^{\frac{N-2d}{2d}}\,
\varepsilon^{E_a}
\int_{\mathbb{R}^N}
\frac{|y|^{-2a}\,\zeta(\varepsilon^{1/\alpha}y)^2}
{\big(1+|y|^{\alpha}\big)^{\frac{N-2d}{d}}}\dy, \text{ where } 
E_a:= \frac{N-2d}{d(N-2-2a)}
\end{align*}
Near the origin, the integrand behaves like $|y|^{-2a}$ and is integrable since $2a < N$. For large $y$, we consider the following cases. 

\medskip\noindent
\textbf{Case 1:} $a< \frac{N-4}{2}$

Using  the fact that  
$\zeta(\varepsilon^{1/\alpha}y)\to1$ and $|\zeta|\le1$, gives
\[
\int_{\mathbb{R}^N}
\frac{|y|^{-2a}\,\zeta(\varepsilon^{1/\alpha}y)^2}
{\big(1+|y|^{\alpha}\big)^{\frac{N-2d}{d}}}\dy
\longrightarrow
\int_{\mathbb{R}^N}
\frac{|y|^{-2a}}
{\big(1+|y|^{\alpha}\big)^{\frac{N-2d}{d}}}\dy
= A^{-\frac{N-2d}{2d}}K_a.
\]

\medskip\noindent
\textbf{Case 2:} $a= \frac{N-4}{2}$.

A radial computation shows
\[
\int_{1<|y|<R}
\frac{|y|^{-2a}}{\big(1+|y|^{\alpha}\big)^{\frac{N-2d}{d}}}\dy
\sim C\,\log R
\quad\text{as }R\to\infty
\]
for some $C>0$. Taking $R\sim\varepsilon^{-1/\alpha}$ and using again
$\zeta(\varepsilon^{1/\alpha}y)\to1$ yields
\[
\int_{\mathbb{R}^N}
\frac{|y|^{-2a}\,\zeta(\varepsilon^{1/\alpha}y)^2}
{\big(1+|y|^{\alpha}\big)^{\frac{N-2d}{d}}}\dy
=
C\,|\log\varepsilon|,
\]
and hence
\[
\||x|^{-a}u_\varepsilon\|_{2}^2
=
\widehat K_a\,\varepsilon^{E_a}|\log\varepsilon|\quad  \text{  for some $\widehat K_a>0$.}
\]

\medskip\noindent
\textbf{Case 3:} $a>\frac{N-4}{2}$.

Observe that 
\[
\int_{\mathbb{R}^N}
\frac{|y|^{-2a}\,\zeta(\varepsilon^{1/\alpha}y)^2}
{\big(1+|y|^{\alpha}\big)^{\frac{N-2d}{d}}}\dy
=
C\,\varepsilon^{-\frac{(2a+4-N)}{\alpha}}
+O\!\big(\varepsilon^{-\frac{(2a+4-N)}{\alpha}}\big),
\]
for some $C>0$. It implies 
\[
\||x|^{-a}u_\varepsilon\|_{2}^2
=
\widetilde K_a\,\varepsilon^{\frac{N-2d}{2d}}
+O\!\left(\varepsilon^{\frac{N-2d}{2d}}\right)
\]
for some $\widetilde K_a>0$, which proves \textit{(iv)}.

Using a similar approach, and one can also obtain \((v)\) and \((vi)\).\QED
    \end{proof}
    \begin{prop}\label{C79}
 (i)   If  $\frac{N}{N-2(1+a)+b}<q_c$, then 
\begin{equation*}
    \frac{\||x|^{-b}\ue\|_{q_c}^{q_c}}{\||x|^{-a}\ue\|_2^{\frac{4d}{N-2a+2b}}}= \begin{cases}
        C,\quad&\text{if}~0<a<\max\{0,\frac{N-4}{2}\},\\
       C|\log \var|^{-\frac{2d}{N-2a+2b}},&\text{if}~0<a=\frac{N-4}{2},\\
        C\var^{\frac{(N-2d)(4+2a-N)}{(N-2-2a)(N-2a+2b)}},&\text{if}~\max\{0,\frac{N-4}{2}\}<a<\frac{N-2}{2}.
    \end{cases}
\end{equation*}
(ii) If $  q_c \leq \frac{N}{N-2(1+a)+b}$, then 
\begin{equation*}
     \frac{\||x|^{-b}\ue\|_{q_c}^{q_c}}{\||x|^{-a}\ue\|_2^{\frac{4d}{N-2a+2b}}}=\begin{cases}
         C\var^{\frac{(N-2d)(N-4+6(b-a))}{4d(N-2a+2b)}}|\log \var|,\quad &\text{if}~~q_c= \frac{N}{N-2(1+a)+b},\\
         C\var^{\frac{(N-2d)(N-4+6(b-a))}{4d(N-2a+2b)}}, &\text{if}~~q_c<\frac{N}{N-2(1+a)+b}.
     \end{cases} 
     \end{equation*}
    \end{prop}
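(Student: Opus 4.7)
The proposition is a direct corollary of Proposition \ref{C11}: it suffices to substitute the asymptotics of $\||x|^{-b}\ue\|_{q_c}^{q_c}$ and $\||x|^{-a}\ue\|_2^2$ into the ratio and simplify case by case. The preliminary observation is that from $q_c(N-2a+2b)=2(N+2)$ one deduces $q_c\delta_{q_c}=2$ and the key identity $q_c(1-\delta_{q_c})=q_c-2=\frac{4d}{N-2a+2b}$. This rewrites the denominator as $(\||x|^{-a}\ue\|_2^2)^{2d/(N-2a+2b)}$, so that the $\varepsilon$-exponent supplied by Proposition \ref{C11}(iv) is simply scaled by $2d/(N-2a+2b)$.

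For Part (i), we are in the range $q_c>\frac{N}{N-2(1+a)+b}$, hence the third case of Proposition \ref{C11}(iii) applies. A short computation using $2N-q_c(N-2d)=\frac{8d}{N-2a+2b}$ (obtained by expanding $q_c(N-2d)=q_c(N-2-2a+2b)$ and using $q_c(N-2a+2b)=2(N+2)$) reduces the numerator exponent to $\frac{2(N-2d)}{(N-2-2a)(N-2a+2b)}$. Combining with each of the three sub-cases in Proposition \ref{C11}(iv) yields the three announced cases: exact cancellation to a bounded constant when $0<a<(N-4)/2$; a surviving $|\log\varepsilon|^{-2d/(N-2a+2b)}$ factor at the boundary $a=(N-4)/2$; and a residual exponent equal to $\frac{(N-2d)(4+2a-N)}{(N-2-2a)(N-2a+2b)}$ when $a>\max\{0,(N-4)/2\}$, where the last simplification uses $\frac{2}{N-2-2a}-1=\frac{4+2a-N}{N-2-2a}$.

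For Part (ii), the condition $q_c\le\frac{N}{N-2(1+a)+b}$ restricts $a$ to $a>\max\{0,(N-4)/2\}$, so the denominator is always drawn from the third case of Proposition \ref{C11}(iv), with exponent $\frac{N-2d}{N-2a+2b}$. The numerator is taken either from the boundary (logarithmic) case or the first case of Proposition \ref{C11}(iii). In both situations a direct subtraction, together with the identity $N-2a+2b-4d=N-4+6(b-a)$, produces the common exponent $\frac{(N-2d)(N-4+6(b-a))}{4d(N-2a+2b)}$, with the $|\log\varepsilon|$ factor inherited from C11(iii) exactly in the equality sub-case. The only genuine difficulty is bookkeeping the algebra across the five sub-cases; once the two identities above are recorded, no analytic input beyond Proposition \ref{C11} is required.
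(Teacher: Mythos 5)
Your overall strategy --- substitute the asymptotics of Proposition \ref{C11} into the quotient and simplify case by case --- is exactly what the paper does, and your algebra for part (i) checks out: the identities $q_c\delta_{q_c}=2$, $q_c-2=\tfrac{4d}{N-2a+2b}$ and $2N-q_c(N-2d)=\tfrac{8d}{N-2a+2b}$ are all correct, the numerator exponent reduces to $\tfrac{2(N-2d)}{(N-2-2a)(N-2a+2b)}$, and the three alternatives of (i) then follow from the three cases of Proposition \ref{C11}(iv) exactly as you describe.

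Part (ii) contains a genuine gap in the equality sub-case. You assert that the logarithmic (middle) case and the first case of Proposition \ref{C11}(iii) both lead, after subtracting the denominator exponent $\tfrac{N-2d}{N-2a+2b}$, to the ``common exponent'' $\tfrac{(N-2d)(N-4+6(b-a))}{4d(N-2a+2b)}$. They cannot: as stated, the first case of \ref{C11}(iii) gives the numerator exponent $\tfrac{N-2d}{4d}$, while the middle case gives $\tfrac{(2N-Nq_c+2q_cd)(N-2d)}{4d(N-2-2a)}$, which on the line $q_c=\tfrac{N}{N-2(1+a)+b}$ equals $\tfrac{(N-2d)q_c}{4d}$; these differ by the factor $q_c\neq 1$, so the two subtractions produce different powers of $\varepsilon$. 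Concretely, the middle case yields the ratio exponent $\tfrac{(N-2d)(4+2a-N)}{(N-2-2a)(N-2a+2b)}=\tfrac{N-2d}{2d}$ on that line, not the claimed one; for the admissible boundary point $N=5$, $a=1.3$, $b=1.6$ (where $q_c=2.5$) this equals $2.571\ldots$, whereas $\tfrac{(N-2d)(N-4+6(b-a))}{4d(N-2a+2b)}=0.642\ldots$. The discrepancy traces back to an internal inconsistency in Proposition \ref{C11}(iii) itself --- its proof of the sub-case $(N-2-2a+b)q<N$ produces the exponent $\tfrac{(N-2d)q}{4d}$, not the $\tfrac{N-2d}{4d}$ recorded in the statement --- and a correct derivation of part (ii) must confront this rather than gloss over it: as written, your ``direct subtraction'' does not yield the stated logarithmic alternative, and the non-logarithmic alternative is obtained only by taking the inconsistent first line of \ref{C11}(iii) at face value.
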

\begin{proof}
The three lines
\[
L_{1}: a=b,\qquad L_{2}: a=b+1,\qquad 
L_{3}: q_{c}=\frac{N}{N-2(1+a)+b}
\]
partition the admissible strip in the \((a,b)\)-plane into the regions
\(R_{1},R_{2},R_{3}\), as indicated in the figures. By Proposition~\ref{C11}, one has
\[
\bigl\||x|^{-b}u_{\varepsilon}\bigr\|_{q_c}^{q_c}=
 \begin{cases}
            O\left(\varepsilon^{\frac{N-2d}{4d}}\right),&\text{if}~~2<q_c<\frac{N}{N-2(1+a)+b},\\
            O\left(\varepsilon^{\frac{2(N-2d)}{(N-2a+2b)(N-2-2a)}}|\log\var|\right),&\text{if}~~q_c=\frac{N}{N-2(1+a)+b},\\
            O\left(\varepsilon^{\frac{2(N-2d)}{(N-2a+2b)(N-2-2a)}}\right),&\text{if}~~\frac{N}{N-2(1+a)+b}<q_c<\t.
            \end{cases}
\]
On the other hand, the asymptotic behavior of 
\(\bigl\||x|^{-a}u_{\varepsilon}\bigr\|_{2}\) depends on the position of
\(a\) with respect to the vertical projections of \(R_{1},R_{2},R_{3}\)
onto the \(a\)-axis.

\medskip\noindent
\textbf{Case 1:} \(\displaystyle \frac{N}{N-2(1+a)+b}<q_{c}\).

In this case, the line \(L_{3}\) lies above the admissible strip
delimited by \(L_{1}\) and \(L_{2}\); hence, it does not impose any
additional restriction on \(a\). Consequently, one has
\(0<a<\frac{N-2}{2}\), and the various asymptotic regimes of
\(\||x|^{-a}u_{\varepsilon}\|_{2}\) on the subintervals
\[
0<a<\max\Bigl\{0,\frac{N-4}{2}\Bigr\},\qquad
0<a=\frac{N-4}{2},\qquad
\max\Bigl\{0,\frac{N-4}{2}\Bigr\}<a<\frac{N-2}{2},
\]
produce, after substitution into the quotient
\[
\frac{\bigl\||x|^{-b}u_{\varepsilon}\bigr\|_{q_c}^{q_c}}
     {\bigl\||x|^{-a}u_{\varepsilon}\bigr\|_{2}^{
        \frac{4d}{\,N-2a+2b\,}}},
\]
the three alternatives in \textit{(i)}: constant behaviour, logarithmic
correction, or pure power of \(\varepsilon\), respectively.

\medskip\noindent
\textbf{Case 2:} \(\displaystyle q_{c}\le\frac{N}{N-2(1+a)+b}\).

Here, the line \(L_{3}\) intersects the strip between \(L_{1}\) and
\(L_{2}\) and truncates the admissible values of \(a\). Geometrically,
this corresponds to the fact that the projection of the shaded region
on the \(a\)-axis is the interval
\[
\frac{N^{2}-8}{2(N+2)}<a<\frac{N-2}{2}.
\]

\begin{figure}[htbp]
    \centering
    \begin{minipage}{0.38\textwidth}
        \centering
        \includegraphics[
            width=\textwidth,
            trim={0 50 0 50},
            clip
        ]{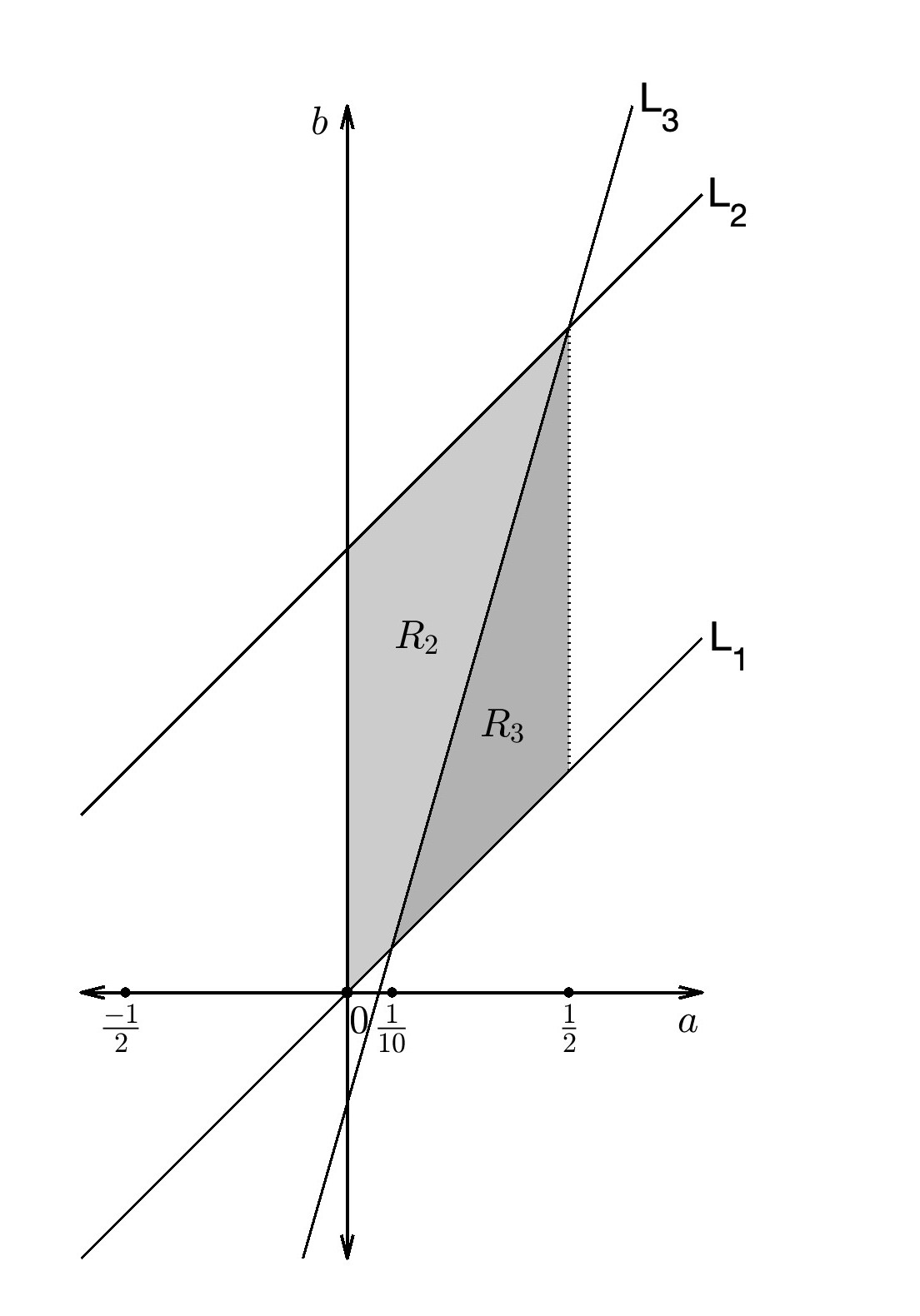}
        \caption*{Figure 1: $N=3$}
    \end{minipage}\hfill
    \begin{minipage}{0.38\textwidth}
        \centering
        \includegraphics[
            width=\textwidth,
            trim={0 50 0 50},
            clip
        ]{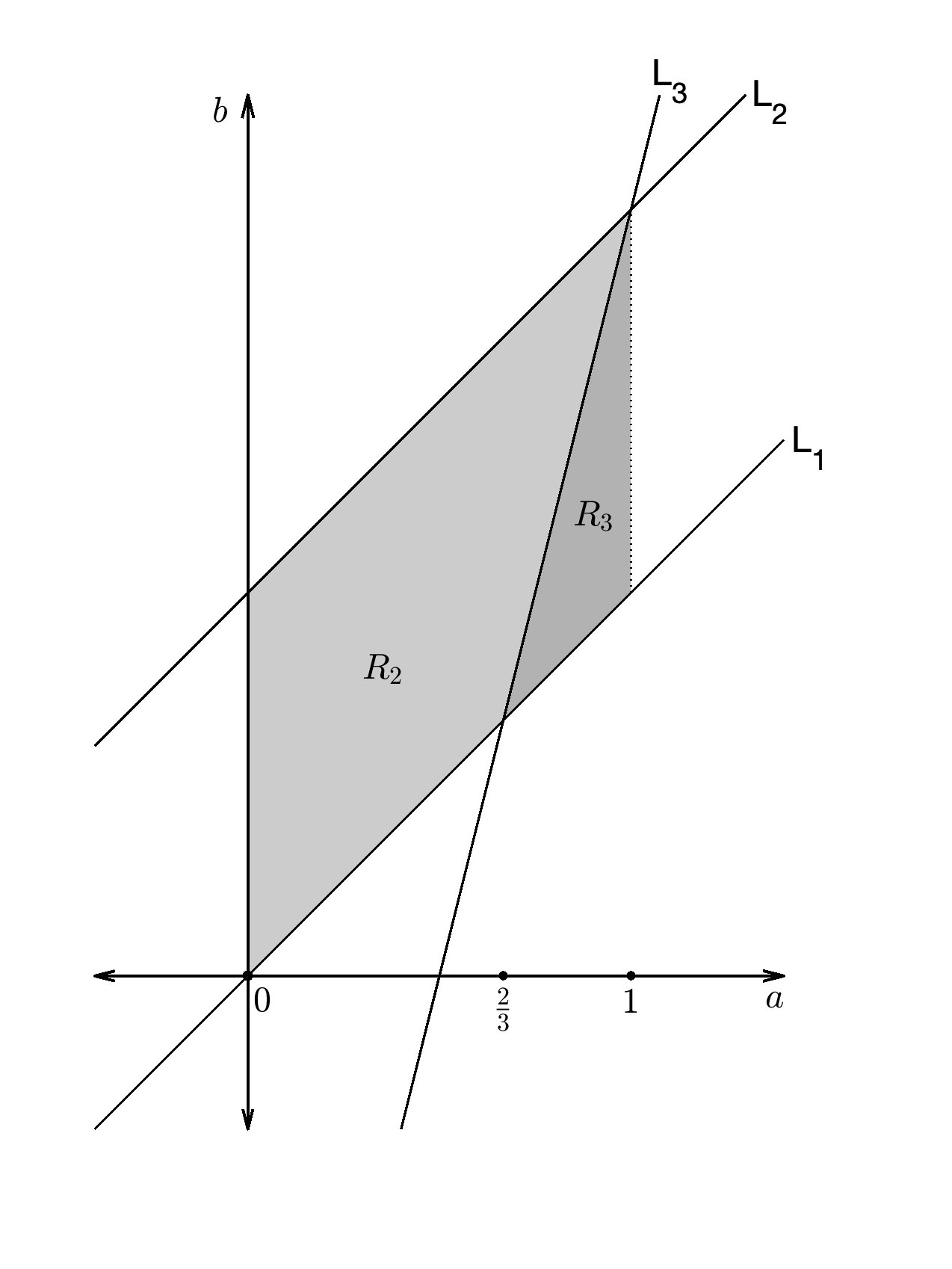}
        \caption*{Figure 2: $N=4$}
    \end{minipage}
\end{figure}

\begin{figure}[htbp]
    \centering
    \includegraphics[
        width=0.4\textwidth,
        trim={0 60 0 60},
        clip
    ]{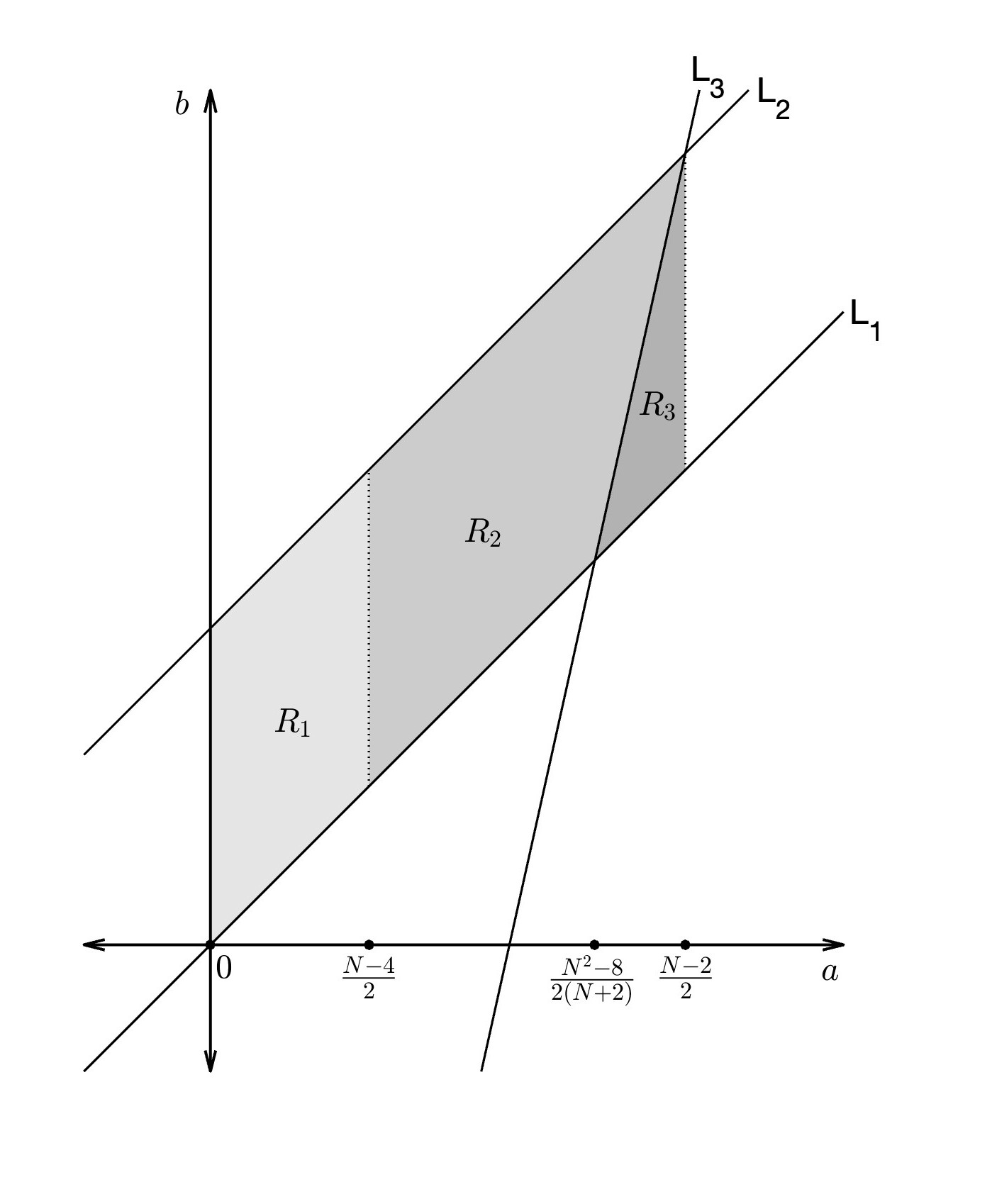}
    \caption*{Figure 3: $N \ge 5$}
\end{figure}
For every \(a\) in this interval, the quotient
\[
\frac{\bigl\||x|^{-b}u_{\varepsilon}\bigr\|_{q_c}^{q_c}}
     {\bigl\||x|^{-a}u_{\varepsilon}\bigr\|_{2}^{
        \frac{4d}{\,N-2a+2b\,}}}
\]
contains the power factor
\(\varepsilon^{\frac{(N-2d)(4+2a-N)}{(N-2-2a)(N-2a+2b)}}\).
If \((a,b)\) lies on the line \(L_{3}\), i.e.
\(q_{c}=\frac{N}{N-2(1+a)+b}\), one additionally obtains a logarithmic
factor \(|\log\varepsilon|\), which corresponds to the first case in
\textit{(ii)}. When \((a,b)\) lies strictly below \(L_{3}\),
equivalently \(q_{c}<\frac{N}{N-2(1+a)+b}\), no logarithmic term
appears and one obtains the second alternative in \textit{(ii)}.\qed
\end{proof}

    \bibliographystyle{abbrv}
\bibliography{ref}
\end{document}